\newcommand{\leftexp}[2]{{\vphantom{#2}}^{#1}%
      \kern-\scriptspace%
      {#2}}
\newcommand{\Aut}{{\rm Aut}}
\newcommand{\Tr}{\mathrm{Tr}}
\newcommand{\CM}{\mathrm{CM}}
\newcommand{\tr}{\mathrm{tr}}
\renewcommand{\Im}{\mathrm{Im}}
\newcommand{\Sp}{\mathrm{Sp}}
\renewcommand{\H}{\mathbb H}
\newcommand{\T}[1]{{#1}^t}
\newcommand{\A}{{\mathbb A}}
\newcommand{\Q}{{\mathbb Q}}
\newcommand{\Z}{{\mathbb Z}}
\newcommand{\R}{{\mathbb R}}
\newcommand{\C}{{\mathbb C}}
\newcommand{\vol}{{\mathrm {vol}}}
\newcommand{\bs}{\backslash}
\newcommand{\GL}{{\rm GL}}
\newcommand{\SL}{{\rm SL}}
\newcommand{\GSp}{{\rm GSp}}
\newcommand{\sgn}{{\rm sgn}}
\newcommand{\cond}{{\rm cond}}
\newcommand{\mat}[4]
{{\setlength{\arraycolsep}{0.5mm}\left
(\begin{array}{cc}#1&#2\\#3&#4\end{array}\right)}}
\newcommand{\forget}[1]{}
\newtheorem{lemma}{Lemma}[section]
\newtheorem{theorem}{Theorem}
\newtheorem{corollary}[lemma]{Corollary}
\newtheorem{proposition}[lemma]{Proposition}
\newtheorem{definition}[lemma]{Definition}
\theoremstyle{remark}
\newtheorem{remark}[lemma]{Remark}
\begin{document}

\bibliographystyle{plain}

\title{On ratios of Petersson norms for Yoshida lifts}

\author{Abhishek Saha}
\address{ Departments of Mathematics \\
  University of Bristol \\
  Bristol, United Kingdom} \email{abhishek.saha@bristol.ac.uk}

\begin{abstract}We explicate various features of the correspondence between scalar valued Siegel cusp forms of degree $n$ and certain automorphic representations on $\GSp_{2n}$. We prove an algebraicity property for a ratio of Petersson norms associated to a Siegel cusp form of degree 2 (and arbitrary level) whose adelization generates a weak endoscopic lift.
\end{abstract}

 \maketitle
\renewcommand{\theenumi}{\roman{enumi}}

\section{Introduction}

\subsection{Motivation}It is of fundamental interest to understand the \emph{arithmetic} properties of automorphic forms on Shimura varieties. A well-known problem in this area is Deligne's conjecture~\cite{deligneconj} on algebraicity of  critical values of motivic $L$-functions. Despite results in many special cases, 
the general form of this conjecture remains far from settled. A closely related problem is to prove the algebraicity of ratios of Petersson norms (more generally, periods) for ``functorially related" automorphic forms on different Shimura varieties. 

One of the first results on the algebraicity of such period ratios was obtained for automorphic forms on indefinite quaternion algebras. Let $g$ be an arithmetically normalized holomorphic newform of even weight on a compact quotient $X$ of an indefinite quaternion algebra $D / \Q$. Let $f$ be the arithmetically normalized holomorphic newform for $\Gamma_0(N)$ associated to $g$ by the Jacquet--Langlands--Shimizu correspondence.  Let $\langle f, f \rangle$ and $\langle g, g \rangle$ denote the Petersson inner products taken on  $\Gamma_0(N) \bs \H$ and $X$ respectively. Then,  it was proved by Shimura~\cite{shimura81} that the ratio $\langle f, f \rangle / \langle g, g \rangle$ is algebraic. This result was refined by Harris--Kudla~\cite{harkud} and Prasanna~\cite{prasanna06}.

Moving on to higher rank groups, let $k$ be an even integer and $S_k(\Sp_{2n}(\Z))$ be the space of holomorphic Siegel cusp forms of weight $k$ for the group $\Sp_{2n}(\Z)$. Given any normalized Hecke eigenform $f \in S_{2k -2}(\SL_2(\Z))$, it is known (see~\cite{eichzag}) that there exists a (unique up to multiples) Hecke eigenform $F \in S_{k }(\Sp_4(\Z))$ satisfying \begin{equation}\label{SKeq}L_\mathfrak{f}(s, \pi_F) = \zeta(s - 1/2) \zeta(s + 1/2)  L_\mathfrak{f}(s, \pi_f)\end{equation}
where $\pi_f$ (resp. $\pi_F$) denotes the cuspidal automorphic representation on $\GL_2(\A)$ (resp. $\GSp_4(\A)$) associated to\footnote{See Section~\ref{s:adelizationrep} for details on how to associate automorphic representations to Siegel cusp forms.} $f$ (resp. $F$), $\zeta(s)$ denotes the  Riemann zeta function, $L_\mathfrak{f}(s, \pi_f)$ denotes the finite part of the Langlands $L$-function attached to $\pi_f$, and $L_\mathfrak{f}(s, \pi_F)$ denotes the finite part of the spin (degree 4) $L$-function attached to $\pi_F$. The Siegel cusp form $F$ is called the Saito--Kurokawa lift of $f$. By multiplying $F$ by a suitable complex number if necessary, we can assume that all its Fourier coefficients are totally real and algebraic.

Given $f$, $F$ as above and $\sigma \in \Aut(\C)$, we have forms ${}^\sigma\!f \in S_{2k -2}(\SL_2(\Z))$, ${}^\sigma\!F \in S_{k }(\Sp_4(\Z))$ via the action of $\sigma$ on Fourier coefficients. It can be shown that ${}^\sigma\!F$ is the Saito-Kurokawa lift of ${}^\sigma\!f$. Furusawa~\cite{fur84} proved the remarkable result  that the quantity $\frac{\pi^2\langle F , F \rangle}{\langle f, f \rangle}$ is an algebraic number and moreover, for all $\sigma \in \Aut(\C)$, we have $$\sigma \left(\frac{\pi^2 \langle F , F \rangle}{\langle f, f \rangle}\right)= \frac{ \pi^2 \langle {}^\sigma\!F , {}^\sigma\!F \rangle}{\langle {}^\sigma\!f, {}^\sigma\!f \rangle}.$$

Furusawa's result was refined by Kohnen~\cite{kohsk}, extended to the case of Hilbert-Siegel modular forms by Lanphier~\cite{lanphier04} and generalized to the case of Saito-Kurokawa lifts with levels by Brown~\cite{brown07} (see also Brown--Pitale~\cite[Corollary 4.2]{brown-pitale}). In a different direction, Ikeda~\cite{ikeda01} proved a lifting from $S_{2k -2n}(\SL_2(\Z))$ to  $S_{k }(\Sp_{4n}(\Z))$ whenever $k \equiv n \pmod{2}$. The Ikeda lift generalizes the Saito-Kurokawa lift. Choie--Kohnen~\cite{choiekohn03} extended Furusawa's result on algebraicity of ratios of Petersson norms to the setup of the Ikeda lift.  If $f \in S_{2k -2n}(\SL_2(\Z))$ is a normalized Hecke eigenform and $F \in S_{k }(\Sp_{4n}(\Z))$ is its Ikeda lift, then the result of  Choie--Kohnen implies that the quantity $\frac{\pi^{(n+1)(2n-1)}\langle F , F \rangle}{\langle f, f \rangle^n}$ is an algebraic number.

Other results on algebraicity of period ratios include Shimura's work~\cite{shimura81half} on forms related by the Shimura correspondence and Furusawa's result~\cite{fur84} on the holomorphic base change lift for $\GL(2)$. The results of Raghuram and Shahidi~\cite{raghushah} on period relations for cusp forms on $\GL_n$ also lie in this category. Yet another example --- perhaps more fundamental than any of the ones described so far --- is provided by the algebraicity of the ratio of Petersson norms of two Siegel cusp forms that are Hecke eigenforms  with equal eigenvalues at all but finitely many primes. This result is due to Garrett~\cite{gar2} when the eigenvalues generate a totally real field while the more general result is proved in this paper (Corollary~\ref{corpeteq}).
\subsection{A brief overview of Yoshida lifts}The results quoted in the previous subsection are all of the following type: given an (arithmetic) automorphic form on some group $G$ that is constructed by lifting form(s) on a (possibly different) group, the ratio of some appropriate powers of their Petersson norms is algebraic (up to multiplication by a power of $\pi$) and behaves nicely under the action of $\Aut(\C)$. For $G= \GSp_4$, the prototypical example of this phenomenon is provided by the  Saito-Kurokawa lifts. As described above, the algebraicity result in this case is due to Furusawa with later generalizations by various other people.

By~\eqref{SKeq}, the $L$-function of a Saito-Kurokawa lift has the property that it factors into a product of simpler $L$-functions. There is another class of arithmetically significant forms on $\GSp_4$ with the same property. Such forms come from \emph{endoscopic} representations and are classically known as Yoshida lifts. The main result of this paper is an algebraicity relation involving Petersson norms for these lifts. Formally, our relation turns out to be identical to the one for Saito-Kurokawa lifts.

Let us explain in more detail what a Yoshida lift is. We warn the reader that the definition of Yoshida lifts below may appear different than in some other works (such as~\cite{bocsch1991} or~\cite{sahaschmidt}); however, we will show in Section~\ref{s:yoshida} that our definition is equivalent to the classical definition whenever the latter is applicable. Let $ f \in S_{k_1}(N_1, \chi_1)$, $g \in S_{k_2}(N_2, \chi_2)$ be classical holomorphic newforms where the weights $k_1$, $k_2$ are both at least 2. We assume that the characters $\chi_1$, $\chi_2$ are associated with the same primitive character\footnote{In the literature, this is sometimes referred to as $\chi_1$ and $\chi_2$ being co-trained.}, and that $f$ and $g$ are not multiples of each other. Let $\pi_f$, $\pi_g$ be the irreducible cuspidal representations of $\GL_2(\A)$ attached to $f$, $g$ respectively (see~\cite{gelbook}); note that $\pi_f$, $\pi_g$ have the same central character. Let $F $ be a non-zero scalar valued holomorphic Siegel cusp form of degree 2 with respect to some congruence subgroup of $\Sp_4(\Q)$ (see Section~\ref{s:siegeldefs}). We say that $F$ is a Yoshida lift associated to the pair $(f, g)$ if the following conditions are met:

\begin{enumerate}
\item \label{cond1}The adelization\footnote{See Section~\ref{s:adelizationrep} for further details on this.} of $F$ generates an irreducible cuspidal automorphic representation $\pi_F$ of $\GSp_4(\A)$.

 \item \label{cond2}The local $L$-parameter\footnote{See Section~\ref{s:yoshidarep} for further details on this.} for $\pi_{F,v}$ at each place $v$ is the direct sum of the $L$-parameters for $\pi_{f,v}$ and $\pi_{g,v}$.

\end{enumerate}

We remark that Condition~\eqref{cond1} automatically implies that $F$ is an eigenfunction of all the local Hecke algebras outside a finite set of primes (see Proposition~\ref{propneareq}). We  also remark that Condition~\eqref{cond2} says that the Yoshida lift is a special case of Langlands functoriality, coming from the embedding of dual groups
\begin{align}\label{dualgroupmorphismeq}
 \{(g_1,g_2)\in\GL_2(\C)\times\GL_2(\C)\:|\:\det(g_1)=\det(g_2)\}&\longrightarrow\GSp_4(\C),\\
 (\mat{a}{b}{c}{d},\mat{a'}{b'}{c'}{d'})&\longrightarrow\left(\begin{matrix}a&&b\\&a'&&b'\\c&&d\\&c'&&d'\end{matrix}\right).\nonumber
\end{align}
Furthermore Condition~\eqref{cond2} implies that for \emph{any} irreducible cuspidal representation $\pi$ of $\GL_n(\A)$, we have an equality of global standard times spin $L$-functions
\begin{equation}L(s,  \pi\times\pi_{F} ) = L(s, \pi \times \pi_{f})L(s, \pi \times \pi_{g}).
\end{equation}

Note also that our assumption that $\pi_f$, $\pi_g$ have the same central character is essential, for otherwise the direct sum of their $L$-parameters cannot be an $L$-parameter for $\GSp_4$.

It is worth asking when Yoshida lifts exist. Thankfully, a complete answer is available: $f$, $g$ as above have a Yoshida lift (i.e., a scalar valued holomorphic Siegel cusp form of degree 2 satisfying the two conditions above) \emph{if and only if both the following conditions are met}:

\begin{enumerate}

\item The integers $k_1$ and $k_2$ are even, and at least one of them is equal to 2.

\item There is a prime $p$ such that $\pi_f$, $\pi_g$ are both discrete series at $p$.\footnote{Any such prime must divide $\gcd(N_1, N_2)$; in particular $N_1$ and $N_2$ cannot be coprime.}

\end{enumerate}

Furthermore, one can show that whenever a Yoshida lift exists, it must have weight $\frac{k_1}2 + 1$ (if we assume $k_2 =2$). We refer the reader to Section~\ref{s:yoshidarep} for further details on all these facts.

\begin{remark}Incidentally, the isomorphism class of $\pi_F$ is not uniquely determined by $f, g$. In fact, if $T \neq \emptyset$ is the set of finite places where $\pi_f$, $\pi_g$ are both discrete series, then the various possible Yoshida lifts of $(f, g)$ in the above sense give rise to $2^{\#T-1}$ inequivalent cuspidal representations of $\GSp_4(\A)$ all of which lie in the same global $L$-packet; see Theorem~\ref{t:yoshida}.
\end{remark}

The theory of such lifts was initiated by Yoshida in \cite{yosh1980}. Classical Yoshida lifts have been studied extensively by B{\"o}cherer and Schulze-Pillot~\cite{bocsch, bocsch1991, bocsch1994, bocsch1997}. We also refer the reader to~\cite{sahaschmidt} for a representation theoretic treatment of the classical Yoshida lifting. In all these works, the Yoshida lifts were constructed with respect to a Siegel congruence subgroup of square-free level for $\Sp_4(\Z)$.   Our definition of Yoshida lifts in this paper allows for the lift to be defined with respect to an arbitrary congruence subgroup of $\Sp_4(\Z)$.\footnote{We will prove in Proposition~\ref{equivclassical} that the definition of Yoshida lift in this paper coincides with the classical definition whenever $f$, $g$ have squarefree level and trivial central character and the lift is defined with respect to a Siegel-type congruence subgroup of $\Sp_4(\Z)$ of square-free level.} An advantage of our present approach is that we do not need to place the additional hypothesis on $f$, $g$ (namely that their levels be squarefree, their characters be trivial, and their Atkin-Lehner eigenvalues coincide at all places) that are necessary if one demands that the Yoshida lift be with respect to a Siegel congruence subgroup of $\Sp_4(\Z)$ of square-free level. In Section~\ref{s:yoshida}, we prove various technical facts about Yoshida lifts and the related notions of ``weak Yoshida lifts", ``Yoshida spaces", and ``weak Yoshida spaces".

It seems appropriate to point out here that on the \emph{automorphic representations side}, an essentially complete account of Yoshida liftings was given by Brooks Roberts~\cite{rob2001} using global theta lifts; see also the paper of Ichino-Ikeda~\cite[Sec. 8]{ichino-ikeda}. Our thrust here is slightly different. What we do in Section~\ref{s:yoshidarep} is to use the results of Roberts, along with the theory of ``adelization" and ``de-adelization" (developed in detail in Section~\ref{s:adelization}) to give an equally complete account of Yoshida liftings on the \emph{Siegel cusp forms side}.  The fundamental result  which makes this passage  possible  is Theorem~\ref{t:deadelize}, which asserts that any cuspidal representation of $\GSp_4(\A)$ with the correct archimedean component contains a vector in the image of our adelization map.

\subsection{The main result}
Let $f \in S_{2k}(N_1, \chi_1)$, $g \in S_{2}(N_2, \chi_2)$ be classical holomorphic newforms which satisfy the conditions for a scalar valued Yoshida lifting. In other words: $f$ is not a multiple of $g$, the characters $\chi_1$ and $\chi_2$ are associated to the same primitive character, and there is a prime $p$ such that $\pi_f$, $\pi_g$ are both discrete series at $p$. Let $F \in S_{k+1}(\Gamma)$ be a Yoshida lift of $(f,g)$ where $\Gamma$ is a congruence subgroup of $\Sp_4(\Z)$. Define Petersson inner products as follows:

$$\langle g, g\rangle = \frac{1}{\vol(\Gamma_0(N_2) \bs \H)}\int_{\Gamma_0(N_2) \bs \H} |g(z)|^2  dx dy,$$

$$\langle f, f\rangle =  \frac{1}{\vol(\Gamma_0(N_1) \bs \H)} \int_{\Gamma_0(N_1) \bs \H}|f(z)|^2 y^{2k - 2} dx dy,$$

$$\langle F, F\rangle = \frac{1}{\vol(\Gamma \bs \H_2)}\int_{\Gamma \bs \H_2} |F(Z)|^2 (\det Y)^{k - 2} dX dY.$$

Above, we have normalized the Petersson inner products so that they are \emph{independent} of the choice of congruence subgroups for the various forms. We now state a variant of our main result.
\medskip

\noindent\textbf{Theorem~\ref{t:main}A}. \ \emph{Let $f \in S_{2k}(N_1, \chi_1)$, $g \in S_{2}(N_2, \chi_2)$ be newforms that satisfy the conditions for a scalar valued Yoshida lifting. Let $F$ be a Yoshida lift of $(f,g)$. Suppose, further, that $k \ge 6$ and that all the Fourier coefficients of $f, g$ and $F$ are algebraic numbers contained in some CM field. Then $$\frac{\pi^2\langle F, F\rangle}{\langle f, f\rangle} \in \overline{\Q},$$ and moreover, for $\sigma \in \Aut(\C)$, we have $$\sigma\left(\frac{\pi^2\langle F, F\rangle}{\langle f, f\rangle}\right) =  \frac{\pi^2\langle {}^\sigma\!F, {}^\sigma\!F\rangle}{\langle {}^\sigma\!f, {}^\sigma\!f\rangle}.$$}
\medskip

The stipulation that $f$ and $g$ have CM algebraic Fourier coefficients is automatically met whenever their first Fourier coefficients are normalized to be equal to 1. The condition that $F$ has CM algebraic Fourier coefficients does not lead to significant loss of generality either. This is because, given any $f,g$ as above satisfying the conditions for a scalar valued Yoshida lifting, one can always find a Yoshida lift of $(f,g)$ with CM algebraic Fourier coefficients. In fact, much more is true: given any Yoshida lift $F$ of $(f,g)$, one can always find another Yoshida lift $F'$ of $(f,g)$ such that $F$ and $F'$ generate the \emph{same} cuspidal automorphic representation and the Fourier coefficients of $F'$ are contained in the $\CM$ field generated by the Hecke eigenvalues of $f$ and $g$. See Theorem~\ref{rationalyoshida} for further details.

We note here that the main result of this paper (Theorem~\ref{t:main}) is quite a bit stronger than the variant (Theorem~\ref{t:main}A) that was stated above. Indeed, Theorem~\ref{t:main} applies to any $F$ that lies in the \emph{weak Yoshida space} attached to $f, g$. The weak Yoshida space is the vector space consisting of all scalar valued Siegel cusp forms whose adelizations lie in the weak endoscopic space attached to $f$ and $g$ (see Section~\ref{s:weakyosh} for more details). All Yoshida lifts as defined above are members of the weak Yoshida space; moreover, if one assumes a special case of Langlands functoriality, then one can show that the weak Yoshida space is precisely the space spanned by all the Yoshida lifts (see Section~\ref{s:weakyosh}).

Let us say a few words about the proof of Theorem~\ref{t:main} (and it's weaker variant Theorem~\ref{t:main}A). The proof proceeds via the path of algebraicity of special $L$-values of a certain \emph{degree 8} $L$-function. The main idea is to introduce an auxiliary classical newform $h$ and consider the $\GSp_4 \times \GL_2$ $L$-function for $\pi_F \times \pi_{h}$. The theorem is deduced by combining special value results for this $L$-function due to Morimoto~\cite{morimoto} with special value results for the Rankin--Selberg $L$-function on $\GL_2 \times \GL_2$ due to Shimura~\cite{shi76}. We remark that our proof gives further evidence of the connection between period relations on functorially related automorphic forms and special value results in the spirit of Deligne's conjecture. We also remark that in the special case of classical Yoshida lifts of \emph{squarefree level} (with respect to a Siegel-type congruence subgroup), Theorem~\ref{t:main}A can also be derived from an inner product formula proved by B\"ocherer and Schulze-Pillot~\cite{bocsch1991} and generalized by  B\"ocherer--Dummigan--Schulze-Pillot~\cite{boschdum}.

\subsection{Further remarks}There is a well-known
 procedure~\cite{gelbook} for associating an automorphic representation of $\GL_2(\A)$ to a classical (elliptic) modular form. In Section~\ref{s:adelization}, which has a semi-expository flavour, we explicitly describe an analogous procedure for attaching automorphic representations of $\GSp_{2n}(\A)$ to (scalar-valued) Siegel cusp forms for arbitrary congruence subgroups of $\Sp_{2n}(\Z)$.\footnote{In the case of elliptic modular forms, there is no loss of generality in restricting oneself to congruence subgroups of the form $\Gamma_1(N)$ but for Siegel modular forms of higher degree it is really necessary to consider various types of congruence subgroups.} We also show, using a recent result of Roberts and Schmidt \cite{roberts-schmidt13}, that every cuspidal automorphic representation of $\GSp_4(\A)$ with the right component at infinity can be obtained from a suitable Siegel cusp form of degree 2 (see Theorem~\ref{t:deadelize} for the precise statement). These ideas are used in Section~\ref{s:arithmeticity} to prove various arithmeticity properties for Siegel cusp forms. They are also the foundation for Section~\ref{s:yoshida}, where we give a representation-theoretic account of Yoshida liftings of arbitrary level and prove various properties of these lifts. In the interest of exposition, and keeping in mind certain future projects, we have developed the theory in Sections~\ref{s:adelization} and \ref{s:yoshida} further than needed for the main result of this paper. In fact, several of the basic results stated therein  do not seem to exist in an equivalent formulation elsewhere. We hope that these parts of the paper will be of independent interest to researchers working in the area.

We have restricted to scalar valued Yoshida lifts in this paper. However, a fuller treatment should include the case of vector valued forms. One reason we restrict to the scalar valued case is for simplicity; the other is that Morimoto's special value result for $\GSp_4 \times \GL_2$ (quoted here as Theorem~\ref{tgsp4gl2}), which is a key ingredient for the proof of our main result (Theorem~\ref{t:main}), is currently available only for scalar valued Siegel cusp forms. If his result can be generalized to include the case of vector valued forms, then our main result will extend to the case of vector valued Yoshida lifts (which occur when neither of the classical newforms $f$, $g$ being lifted has weight equal to 2).

Finally, it seems apt to mention a certain lifting from classical newforms that produces arithmetically significant Siegel cusp forms of degree 2 which are neither of Saito-Kurokawa type, nor of Yoshida type. This is the symmetric cube lift constructed by Ramakrishnan and Shahidi~\cite{ramshi}. If $f$ is a classical newform and $F$ is its symmetric cube lift, then there is good reason to believe that there exists an algebraicity result for $\frac{\langle F, F\rangle}{\langle f, f\rangle^3}$ (see~\cite[Conj. 3.3]{ibukat13}). The methods of this paper do not apply easily in that situation and a more ``motivic" approach seems to be necessary; this is the subject of current investigation in collaboration with A. Raghuram.

\subsection{Acknowledgements}I would like to thank Ralf Schmidt for some illuminating discussions about adelization. I would also like to thank Paul Nelson, Ameya Pitale, and A. Raghuram for helpful comments on an earlier version of this manuscript.

\section{Notations and some basic results for Siegel cusp forms}

\subsection{Notations}The symbols $\Z$, $\Q$, $\R$, $\C$, $\Z_p$ and $\Q_p$ have the usual meanings. We let $\A = \otimes_v' \Q_v$ denote the ring of adeles over $\Q$ and $\A_\mathfrak{f} = \otimes_p' \Q_p$ the subring of finite adeles. The phrase ``almost all" will mean ``all except for finitely many". Given a Dirichlet character $\chi$, we let $\cond(\chi)$ denote the conductor of $\chi$ and we define the Gauss sum $$G(\chi) = G(\chi_0) = \sum_{n=1}^{\cond(\chi)} \chi_0(n) e^{2\pi i n /N},$$ where $\chi_0$ is the primitive Dirichlet character associated to $\chi$. Given a family of subfields $\{F_x\}$ of $\C$, we let $\coprod_x F_x$ denote their compositum in $\C$. We let $\Q^{\tr}$ denote the compositum of all totally real number fields and we denote $\Q^{\CM} =  \Q^{\tr}(i)$. It is well-known (see, for instance~\cite{washcycl}) that $\Q^{\CM}$ is the compositum of all CM fields and that any number field that is a subfield of $\Q^{\CM}$ is either a CM field or a totally real number field.

For any commutative ring $R$ and positive integer $n$, let $M_n(R)$
  denote the ring of $n$ by $n$ matrices with entries in $R$ and $\GL_n(R)$ denote the group of invertible matrices.  If
  $A\in M_n(R)$, we let $\T{A}$ denote its transpose. We let $M_n^{\rm sym}(R)$ denote the additive group of symmetric matrices in  $M_n(R)$. Let $I_n$ denote the $n$ by $n$ identity matrix. Denote by $J_n$ the $2n$ by $2n$ matrix given by
$$
J_n =
\begin{pmatrix}
0 & I_n\\
-I_n & 0\\
\end{pmatrix}.
$$

Define the algebraic groups
   $\GSp_{2n}$ and $\Sp_{2n}$ over $\Z$ by
$$\GSp_{2n}(R) = \{g \in \GL_{2n}(R) \; | \; \T{g}J_ng =
  \mu_n(g)J_n,\:\mu_n(g)\in R^{\times}\},$$
$$
\Sp_{2n}(R) = \{g \in \GSp_{2n}(R) \; | \; \mu_n(g)=1\},
$$
for any commutative ring $R$. \emph{Throughout this paper, we let the letter $G_n$ denote the group $\GSp_{2n}$.} If $S$ is any subring of $\R$, we let $G_n(S)^+$ denote the subgroup of $G_n(S)$ consisting of elements with $\mu_n(g)>0$. We let $K_\infty$ denote the standard maximal compact subgroup of $G_n(\R)^+$ given by $$K_\infty = \left\{ \mat{A}{B}{-B}{A}  \in \GL_{2n}(\R), \ A \T{A} + B \T{B} = I_n, A\T{B} = B \T{A} \right\}.$$

The Siegel upper-half space of degree $n$ is defined by
$$
\H_n = \{ Z \in M_n^{\rm sym}(\C)\;|\ \Im(Z)
  \text{ is positive definite}\}.
$$ We will often write $Z = X + iY$ where $X$ and $Y$ lie in $M_n^{\rm sym}(\R)$.
The group $G_{n}(\R)^+$ acts on $\H_n$ via
$$
 gZ := (AZ+B)(CZ+D)^{-1}\qquad\text{for }
 g=\begin{pmatrix} A&B\\ C&D \end{pmatrix} \in G_{n}(\R)^+,\;Z\in \H_n.
$$
We let $J(g,Z) = CZ + D$.
For any positive integer $N$,
define
\begin{equation}\label{Gamma0defeq}
\Gamma^{(n)}(N) := \left\{g\in \Sp_{2n}(\Z)\;|\;g \equiv I_{2n} \pmod{N}\right\}
\end{equation}
 and
 \begin{equation}\label{Gamma0sdefeq}
\Gamma^{(n), *}(N) := \left\{g\in \Sp_{2n}(\Z) \;|\;g \equiv \mat{aI_n}{}{}{a^{-1}I_n} \pmod{N}, \text{ for some } a \in (\Z/N\Z)^\times.\right\}
\end{equation}

We say that a subgroup $\Gamma$ of $\Sp_{2n}(\Q)$ is a \emph{congruence subgroup} if there is an integer $N$ such that $\Gamma$ contains $\Gamma^{(n)}(N)$ as a subgroup  of finite index.

\subsection{Siegel cusp forms}\label{s:siegeldefs}Given a congruence subgroup $\Gamma$ of $\Sp_{2n}(\Q)$ and a positive integer $k$, let $S_k(\Gamma)$ denote the space of holomorphic functions $F$ on
$\H_n$ which satisfy the relation
\begin{equation}\label{siegeldefiningrel}
F(\gamma Z) = \det(J(\gamma,Z))^k F(Z), \quad \gamma \in \Gamma, Z \in \H_n,
\end{equation}
and vanish at all the
cusps.\footnote{For a precise formulation of this cusp vanishing condition, see~\cite[I.4.6]{Fr1991}.} It is well-known that $S_k(\Gamma)$ is a finite-dimensional vector space for each $k$, $\Gamma$. The integer $n$ is called the degree of the Siegel cusp form $F$. We denote $$S_k^{(n)} = \bigcup_{\Gamma} S_k(\Gamma) $$ where $\Gamma$ varies over all congruence subgroups of $\Sp_{2n}(\Q)$. Note that $S_k^{(n)} = \bigcup_{N \geq 1} S_k(\Gamma^{(n)}(N))$.

Given any primitive Dirichlet character $\chi$ and any integer $N$ such that $\cond(\chi) | N$, let $S_k(\Gamma^{(n),*}(N),$ $\chi)$ denote the subspace of  $S_k(\Gamma^{(n)}(N))$ consisting of those $F$ which satisfy
\begin{equation}\label{siegeldefiningrel2}
F(\gamma Z) = \chi^{-1}(a) \det(J(\gamma,Z))^k F(Z)
\end{equation} for $\gamma \in \Gamma^{(n),*}(N), \quad \gamma \equiv \mat{aI_n}{}{}{a^{-1}I_n} \pmod{N}.$ For any primitive Dirichlet character $\chi$, we denote  \begin{equation}\label{siegeldefiningrel3}S_k^{(n)}(\chi) = \bigcup_{N} S_k(\Gamma^{(n), *}(N), \chi)\end{equation} where the union is taken over all $N$ that are multiples of $\cond(\chi)$.

\begin{lemma}\label{lemmachardec}\begin{enumerate}
\item For each $N$, we have $S_k(\Gamma^{(n)}(N)) = \oplus_{\chi} S_k(\Gamma^{(n),*}(N), \chi)$ where the direct sum is taken over all primitive Dirichlet characters whose conductor divides $N$.
\item We have $S_k^{(n)}= \oplus_{\chi} S_k^{(n)}(\chi)$ where the direct sum is taken over all primitive Dirichlet characters.
 \item  $S_k^{(n)}(\chi) \neq \{0\}$ implies that $\chi(-1) = (-1)^{nk}$.
\end{enumerate}
\end{lemma}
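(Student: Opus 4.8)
The plan is to reduce each part to a statement about how elements of the relevant congruence subgroups act on a Siegel cusp form via the automorphy factor, and then to exploit the standard representation-theoretic decomposition of the finite group $\Sp_{2n}(\Z/N\Z)$-action.

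First I would treat part (i). Fix $N$ and observe that $\Gamma^{(n),*}(N)$ is a normal subgroup of $\Sp_{2n}(\Z)$ containing $\Gamma^{(n)}(N)$, with quotient $\Gamma^{(n),*}(N)/\Gamma^{(n)}(N) \cong (\Z/N\Z)^\times$ via the map sending $\gamma \equiv \mathrm{diag}(aI_n, a^{-1}I_n)$ to $a$. This quotient acts on the finite-dimensional space $S_k(\Gamma^{(n)}(N))$: for $\gamma \in \Gamma^{(n),*}(N)$ define $(\gamma \cdot F)(Z) = \det(J(\gamma,Z))^{-k} F(\gamma Z)$; one checks this is well-defined on the quotient because $F$ is $\Gamma^{(n)}(N)$-invariant in the appropriate sense and $J$ satisfies the cocycle relation $J(\gamma_1\gamma_2,Z) = J(\gamma_1,\gamma_2 Z)J(\gamma_2,Z)$. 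Since $(\Z/N\Z)^\times$ is a finite abelian group, this representation decomposes as a direct sum of its $\chi$-isotypic subspaces as $\chi$ ranges over characters of $(\Z/N\Z)^\times$, equivalently over primitive Dirichlet characters $\chi$ with $\cond(\chi) \mid N$. Comparing \eqref{siegeldefiningrel2} with this decomposition (the sign convention $\chi^{-1}(a)$ matches the $\chi$-isotypic component under a fixed normalization), the $\chi$-isotypic subspace is exactly $S_k(\Gamma^{(n),*}(N),\chi)$, which gives (i). One point to be careful about: I must check that $S_k(\Gamma^{(n),*}(N),\chi)$ as \emph{defined} (a subspace of $S_k(\Gamma^{(n)}(N))$ cut out by \eqref{siegeldefiningrel2}) genuinely coincides with the $\chi$-isotypic piece and not merely contains it, which follows because \eqref{siegeldefiningrel2} is precisely the eigenvalue condition under the whole group $\Gamma^{(n),*}(N)/\Gamma^{(n)}(N)$.

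Next, part (ii) is a direct-limit version of (i). Since $S_k^{(n)} = \bigcup_{N} S_k(\Gamma^{(n)}(N))$ with the union taken over a directed system (ordered by divisibility), and similarly $S_k^{(n)}(\chi) = \bigcup_N S_k(\Gamma^{(n),*}(N),\chi)$, I would argue that the decomposition of (i) is compatible with the inclusions $S_k(\Gamma^{(n)}(N)) \hookrightarrow S_k(\Gamma^{(n)}(NM))$: a form lying in $S_k(\Gamma^{(n),*}(N),\chi)$ also lies in $S_k(\Gamma^{(n),*}(NM),\chi)$ since $\Gamma^{(n),*}(NM) \subseteq \Gamma^{(n),*}(N)$ and the relevant value of $a$ modulo $NM$ reduces to the value modulo $N$. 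Passing to the union, every $F \in S_k^{(n)}$ lies in some $S_k(\Gamma^{(n)}(N))$, hence decomposes into finitely many $\chi$-components each of which lies in $S_k^{(n)}(\chi)$; and the sum is direct because distinctness is already visible at finite level. The only mild subtlety is bookkeeping the index set of characters correctly (every primitive $\chi$ appears, since $\cond(\chi) \mid N$ for $N$ large), which is routine.

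Finally, part (iii): if $F \in S_k^{(n)}(\chi)$ is nonzero, apply \eqref{siegeldefiningrel2} to the element $-I_{2n} \in \Gamma^{(n),*}(N)$ (present once $N \geq 3$, or one can reduce to that case by enlarging $N$). This element corresponds to $a = -1$, acts trivially on $\H_n$ (so $\gamma Z = Z$), and has automorphy factor $J(-I_{2n}, Z) = -I_n$, whence $\det(J(-I_{2n},Z))^k = (-1)^{nk}$. Then \eqref{siegeldefiningrel2} reads $F(Z) = \chi^{-1}(-1)(-1)^{nk} F(Z)$, and since $F \neq 0$ and $\chi^{-1}(-1) = \chi(-1)$, we get $\chi(-1) = (-1)^{nk}$. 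The main obstacle in the whole argument is not conceptual difficulty but rather being scrupulous about the two normalization conventions — the sign in the exponent of $\chi$ in \eqref{siegeldefiningrel2} and the direction of the $(\Z/N\Z)^\times$-action — so that the claimed identification of $S_k(\Gamma^{(n),*}(N),\chi)$ with the $\chi$-isotypic component (and not the $\chi^{-1}$-isotypic component) is correct; once that is pinned down, parts (i)–(iii) follow formally.
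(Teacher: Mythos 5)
Your proposal matches the paper's proof in all essentials: part (i) via the isotypic decomposition of the finite abelian group $\Gamma^{(n),*}(N)/\Gamma^{(n)}(N)\cong(\Z/N\Z)^\times$ acting on $S_k(\Gamma^{(n)}(N))$ (what the paper calls "Fourier analysis"), part (ii) by passing to the directed union using the definition~\eqref{siegeldefiningrel3}, and part (iii) by evaluating~\eqref{siegeldefiningrel2} at $\gamma=-I_{2n}$. The only blemish is your unneeded (and, as stated, not obviously true) remark that $\Gamma^{(n),*}(N)$ is normal in $\Sp_{2n}(\Z)$; what the argument actually requires — and what holds — is that $\Gamma^{(n)}(N)$ is normal in $\Gamma^{(n),*}(N)$, so this is a harmless slip.
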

\begin{proof}The first assertion follows from Fourier analysis on the finite abelian group $\Gamma^{(n),*}(N) / \Gamma^{(n)}(N)$ $\cong (\Z/N\Z)^\times$. The second assertion follows immediately from the first using~\eqref{siegeldefiningrel3}. The third assertion follows by putting $\gamma = - I_{2n}$ in~\eqref{siegeldefiningrel2}.

\end{proof}

Now, let $F \in S_k^{(n)}$. We have the Fourier expansion
$$F(Z) = \sum_{T\in M_n^{\rm sym}(\Q)}a(F, T) e^{2 \pi i \Tr TZ}$$ where the Fourier coefficients $a(F, T)$ are zero unless $T$ is inside a lattice of the form $\frac1NM_n^{\rm sym}(\Z)$.
For any subfield $L$ of $\C$ and any congruence subgroup $\Gamma$ of $\Sp_{2n}(\Q)$, we define the $L$-vector space
\begin{equation}S_k(\Gamma;L) = \{F \in S_k(\Gamma) : a(F,T) \in L \text{ for all }T \in M_n^{\rm sym}(\Q) \}.
\end{equation}
It is a fairly deep fact~\cite[p. 67--74]{shibook2} that \begin{equation}\label{sh:rationalbasis}S_k(\Gamma^{(n)}(N)) =  S_k(\Gamma^{(n)}(N); \Q) \otimes \C.
\end{equation} It follows that given any $\sigma \in \Aut(\C)$ and $F \in S_k^{(n)}$, there exists an element
${}^\sigma\!F \in S_k^{(n)}$ whose Fourier expansion is given by \begin{equation}\label{autcact}{}^\sigma\!F(Z) = \sum_{T\in M_n^{\rm sym}(\Q)}\sigma(a(F, T)) e^{2 \pi i \Tr TZ},\end{equation} and moreover, if $F \in S_k(\Gamma^{(n)}(N))$ then ${}^\sigma\!F \in S_k(\Gamma^{(n)}(N))$.
If $L$ is an algebraic extension of $\Q$ of infinite degree and $F \in S_k(\Gamma^{(n)}(N); L)$, then  $F$ actually belongs to $S_k(\Gamma^{(n)}(N); L')$ where $L'$ is a suitable number field contained in $L$.

Given $F, G \in S_k^{(n)}$, we define their Petersson inner product as follows:
$$\langle F, G\rangle = \frac{1}{\vol(\Gamma \bs \H_n)}\int_{\Gamma \bs \H_n} F(Z)\overline{G(Z)} (\det Y)^{k - n-1} dX dY,$$ where $\Gamma$ is any congruence subgroup such that $F$ and $G$ both belong to $S_k(\Gamma)$; note that our definition is independent of this choice. One can check easily that the various subspaces $S_k^{(n)}(\chi)$, as $\chi$ varies over primitive Dirichlet characters, are pairwise orthogonal under the Petersson inner product.

Finally, we introduce another notation specifically for classical (i.e., degree 1) cusp forms. If $k$ and $N$ are positive integers and $\chi$ is a Dirichlet character mod $N$, we define $S_k(N, \chi)$ as follows:
$$S_k(N, \chi) = S_k^{(1)}(\chi') \cap S_k(\Gamma_1(N)),$$ where $\chi'$ is the primitive Dirichlet character associated to $\chi$ and $$\Gamma_1(N) := \left\{g\in \SL_{2}(\Z) \;|\;g \equiv \mat{1}{*}{0}{1} \pmod{N}\right\}.$$ This definition of $S_k(N, \chi)$ coincides with the one commonly used when talking about classical holomorphic cusp forms. An element $f$ in $S_k^{(1)}$ is called a ``classical holomorphic newform of weight $k$, level $N$, character $\chi$" if $f$ belongs to $S_k(N, \chi)$, is an eigenfunction of all the Hecke operators, and is orthogonal to all forms of the form $g(dz)$ with $g \in S_k(N', \chi)$ and $d$, $N'$ arbitary integers such that $\cond(\chi) | N' | N$, $N' \neq N$, $d| (N/N')$. We refer the reader to~\cite{atkin-lehner} and~\cite{Li79} for further details on classical newform theory.

\section{Adelization and de-adelization}\label{s:adelization}

In this section, which may be of independent interest, we explicitly describe the  procedure of attaching cuspidal automorphic representations of $G_n(\A)$ to elements of $S_k^{(n)}$. A novel feature of our treatment below is that we do not put any restriction on the level of the Siegel cusp forms, despite the lack of a good newform theory.\footnote{However, for a detailed explanation of adelization for the \emph{full level} case, including vector valued Siegel modular forms of any degree, we point the reader to the paper~\cite{asgsch} by Asgari and Schmidt, a work that has influenced the treatment here.} We also go on to prove some arithmeticity results involving Fourier coefficients and Petersson norms of Siegel cusp forms.

\subsection{The adelization map}
 Let $F$ be an element of $S_k^{(n)}$. Let $N$ be any integer such that $F\in S_k(\Gamma^{(n)}(N))$. For each prime $p$, define a compact open subgroup $K_p^N$ of $G_n(\Z_p)$ by $$K_p^N =  \left\{g\in G_n(\Z_p)\;|\;g \equiv \mat{I_n}{}{}{aI_n} \pmod{N}, \ a\in \Z_p^\times\right\}.$$ Note that our choice of $K_p^N$ satisfies the following properties:
\begin{itemize}
\item $K_p^N = G_n(\Z_p)$ for all primes $p$ not dividing $N$,
\item The multiplier map $\mu_n : K_p^N \mapsto \Z_p^\times$ is surjective for all primes $p$,
\item $
 \Gamma^{(n)}(N)=G_n(\Q) \bigcap G_n(\R)^+\prod_{p}K_p^N.
$

\end{itemize}

By strong approximation for $\Sp_{2n}$, one has
$$G_n(\A) = G_n(\Q)G_n(\R)^+\prod_{p} K_p^N.$$ Write an element $g \in G_n(\A) $ as
$$g = g_\Q g_\infty k_\mathfrak{f}, \qquad g_\Q\in G_n(\Q), g_\infty \in G_n(\R)^+, k_\mathfrak{f} \in \prod_{p}K_p^N,$$
and let the \emph{adelization} $\Phi_F$ of $F$ be the function on $G_n(\A)$ defined by $$\Phi_F(g) = \mu_n(g_\infty)^{\frac{nk}{2}} \det(J(g_\infty, iI_n))^{-k} F(g_\infty(iI_n)).$$ This is well defined because of the way the groups $K_p^N$ were chosen. Furthermore, it is independent of the choice of $N$.\footnote{It is worth noting that this construction is not completely canonical. For instance, we could have as well chosen $K_p^N$ to be the group of matrices congruent to $\mat{aI_n}{}{}{I_n} \pmod{N}$ and gotten another notion of adelization $F \mapsto \Phi_F$ which will not in general coincide with ours.} By construction, it is clear that $\Phi_F(h g) = \Phi_F(g) \ \text{ for all } h \in G_n(\Q), g \in G_n(\A)$. It is also easy to see that the mapping $F \mapsto \Phi_F$ is linear. In the next few pages, we explicate various additional properties of this mapping.

Let $Z$ denote the center of $G_n.$ For any unitary Hecke character $\chi$ of $
\Q^\times \bs \A^\times$, let $L^2(G_n(\Q)\bs G_n(\A),$ $\chi)$ denote the Hilbert space of measurable functions $\Phi$ on $G_n(\A)$ satisfying the following properties:

\begin{enumerate}
\item $\Phi(h g) = \Phi(g) \ \text{ for all } h \in G_n(\Q), g \in G_n(\A)$,\medskip

\item $\Phi(zg) =\chi(z)\Phi(g)$ for all $z \in Z(\A)$, $g \in G_n(\A)$,\medskip

\item $\int_{Z(\A)G_n(\Q)\bs G_n(\A)} |\Phi(g)|^2 dg < \infty,$\medskip

\end{enumerate}

Let $L^2_0(G_n(\Q)\bs G_n(\A), \chi)$ denote the closed subspace of $L^2(G_n(\Q)\bs G_n(\A), \chi)$ consisting of all $\Phi \in L^2(G_n(\Q)\bs G_n(\A), \chi)$ satisfying $$\int_{N(\Q) \bs N(\A)}\Phi(ng) dn = 0$$ for each $g \in G_n(\A)$, and each unipotent radical $N$ of each proper parabolic subgroup of $G_n$. The space $L^2_0(G_n(\Q)\bs G_n(\A), \chi)$ is called the \emph{cuspidal} subspace of $L^2(G_n(\Q)\bs G_n(\A),$ $\chi)$. We let $G_n(\A)$ act on $L^2(G_n(\Q)\bs G_n(\A), \chi)$ via the usual right-regular representation; this action leaves the subspace $L^2_0(G_n(\Q)\bs G_n(\A), \chi)$ invariant.

Define an inner product on $L^2(G_n(\Q)\bs G_n(\A), \chi)$ via the equation
\begin{equation}\langle \Phi_1, \Phi_2 \rangle = \frac1{\vol(Z(\A)G_n(\Q) \bs G_n(\A))}\int_{Z(\A)G_n(\Q) \bs G_n(\A)} \Phi_1(g) \overline{\Phi_2(g)} dg \end{equation} where $dg$ denotes the quotient measure induced from any Haar measure on $G_n(\A)$; our definition is independent of this choice.

Let $\mathfrak{p}^{-}_\C$ be defined as in~\cite[p.189]{asgsch}; it denotes a certain space of differential operators acting on the space of smooth functions on $G_n(\R)^+$. For any primitive Dirichlet character $\chi$,  or equivalently, any unitary Hecke character $\chi$ of $
\Q^\times \bs \A^\times$ of finite order,  let $V_k^{(n)}(\chi)$ denote the space of functions $\Phi$ on $G_n(\A)$ such that
\begin{enumerate}

\item $\Phi \in  L^2_0(G_n(\Q)\bs G_n(\A), \chi)$.
  \medskip

\item $\Phi(g k_\infty) = \det(J(k_\infty, iI_n))^{-k} \Phi(g)$ for all $k_\infty \in K_\infty$. \medskip

 \item The function $\Phi$, viewed as a function of    $G_n(\R)^+$, is smooth and satisfies the differential equation $\mathfrak{p}^{-}_\C \cdot \Phi = 0$. \medskip

 \item There exists an integer $N$ such that $\Phi(g k_\mathfrak{f}) = \Phi(g)$ for all $k_\mathfrak{f} \in \prod_{p}K_p^N$.

\end{enumerate}
It can be checked easily that $V_k^{(n)}(\chi) \neq \{0\}$ implies that $\chi_\infty(-1) = (-1)^{nk}$.

We define $V_k^{(n)}$ to be the subspace of functions $\Phi$ on $G_n(\A)$ that can be written as $\Phi = \sum_{i=1}^k  \Phi_i$ with $\Phi_i \in  V_k^{(n)}(\chi_i)$ for some $\chi_i$. Thus, we have the direct sum decomposition  $V_k^{(n)} = \oplus_\chi V_k^{(n)}(\chi)$.
Since we have equipped each space $L^2_0(G_n(\Q)\bs G_n(\A), \chi)$ with an inner product, this naturally turns the space $V_k^{(n)}$ into an inner product space where the various subspaces $V_k^{(n)}(\chi)$ are pairwise orthogonal. Note that $V_k^{(n)}$ is not closed under the right-action of $G_n(\A)$ or even under that of $G_n(\A_\mathfrak{f})$. We remark that all elements of $V_k^{(n)}$ are $(K_\infty\prod_pG_n(\Z_p))$-finite in the sense of~\cite[p. 299]{bump}.

\begin{remark}It is possible to define $V_k^{(n)}$ without any reference to the subspaces $V_k^{(n)}(\chi)$. Indeed, $V_k^{(n)}$ consists of the space of functions $\Phi$ on $G_n(\A)$ that satisfy \begin{enumerate}
\item $\Phi(h g) = \Phi(g) \ \text{ for all } h \in G_n(\Q), \  g \in G_n(\A)$,\medskip

\item $\int_{N(\Q) \bs N(\A)}\Phi(ng) dn = 0$ for each $g \in G_n(\A)$, and each unipotent radical $N$ of each proper parabolic subgroup of $G_n$.

\item $\Phi$ is invariant under the action of $Z(\R^+) \simeq \R^+$ and generates a finite dimensional space under the action of $Z(\A) \simeq \A^\times$.

\item $\int_{Z(\R^+)G_n(\Q)\bs G_n(\A)} |\Phi(g)|^2 dg < \infty,$\medskip

\item $\Phi(g k_\infty) = \det(J(k_\infty, iI_n))^{-k} \Phi(g)$ for all $k_\infty \in K_\infty$. \medskip

 \item The function $\Phi$, viewed as a function of    $G_n(\R)^+$, is smooth and satisfies the differential equation $\mathfrak{p}^{-}_\C \cdot \Phi = 0$. \medskip

 \item There exists an integer $N$ such that $\Phi(g k_\mathfrak{f}) = \Phi(g)$ for all $k_\mathfrak{f} \in \prod_{p}K_p^N$.
\end{enumerate}
\end{remark}

\medskip

\begin{definition}For any $F \in S_k^{(n)}$, and any prime $p$, we say that $F$ is $p$-spherical if there exists an integer $N$ such that $ p \nmid N$ and $F \in S_k(\Gamma^{(n)}(N))$.
\end{definition}

\begin{definition}For any $\Phi \in V_k^{(n)}$, we say that $\Phi$ is $p$-spherical if $\Phi(g k) = \Phi(g)$ for all $k \in G_n(\Z_p)$.
\end{definition}

The following facts follow directly from the definitions:
\begin{enumerate}
\item If $F\in S_k^{(n)}$, then $F$ is $p$-spherical for almost all primes $p$.

  \item  If $\Phi \in V_k^{(n)}$, then $\Phi$ is $p$-spherical for almost all primes $p$.

\item For each prime $p$, the set of $p$-spherical elements of $S_k^{(n)}$ (resp. $V_k^{(n)}$), is a subspace of $S_k^{(n)}$  (resp. $V_k^{(n)}$).
\end{enumerate}

\subsection{A quick survey of Hecke operators}
 In this subsection, we define classical and adelic local Hecke algebras at the unramified places. Let $N$ be an integer and $p$ any prime not dividing $N$. Let $\mathcal{H}_{p,N}^{\mathrm{class}, (n)}$ be the $p$-component of the classical Hecke algebra for $\Gamma^{(n)}(N)$. Precisely, it consists of $\Z$-linear combinations of double cosets $\Gamma^{(n)}(N)M\Gamma^{(n)}(N)$ with $M$ lying in the group $\Delta_{p,N}^{(n)}$ defined by $$\Delta_{p,N}^{(n)} = \{ g \in G_n(\Z[p^{-1}])^+, \  g \equiv \mat{I_n}{0}{0}{\mu_n(g)I_n} \pmod{N} \}.$$ Above,  $\Z[p^{-1}]$ denotes the subring of the rational numbers with only $p$-powers in the denominator. We define multiplication of two elements in $\mathcal{H}_{p,N}^{\mathrm{class}, (n)}$ in the usual way (see~\cite[(11.1.3)]{shibook1}), thus converting $\mathcal{H}_{p,N}^{\mathrm{class}, (n)}$ into a ring. There is a natural map $i_N: \mathcal{H}_{p,N}^{\mathrm{class}, (n)} \rightarrow \mathcal{H}_{p,1}^{\mathrm{class}, (n)}$ that is defined via $\Gamma^{(n)}(N)M\Gamma^{(n)}(N) \rightarrow \Gamma^{(n)}(1)M\Gamma^{(n)}(1)$ for each $M \in \Delta_{p,N}^{(n)}$.

\begin{lemma}\label{heckeiso} The ring $\mathcal{H}_{p,1}^{\mathrm{class}, (n)}$ is commutative and generated by $$T^{(n)}(p) = \Gamma^{(n)}(1)\mathrm{diag}(\underbrace{1, \ldots, 1}_n, \underbrace{p, \ldots, p}_n)\Gamma^{(n)}(1) $$ and the elements $$T_i^{(n)}(p^2) = \Gamma^{(n)}(1)\mathrm{diag}(\underbrace{1, \ldots, 1}_{n-i}, \underbrace{p, \ldots, p}_i, \underbrace{p^2, \ldots, p^2}_{n-i}, \underbrace{p, \ldots, p}_i)\Gamma^{(n)}(1) $$ for $i=1, \ldots, n$. Moreover, if $p$ is any prime and $N$ any integer not divisible by $p$, then the map $i_N: \mathcal{H}_{p,N}^{\mathrm{class}, (n)} \rightarrow \mathcal{H}_{p,1}^{\mathrm{class}, (n)}$  is a ring isomorphism.
\end{lemma}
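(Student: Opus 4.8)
The plan is to reduce everything to the well-known structure theory of the local Hecke algebra of $\Sp_{2n}(\Z_p)$ (equivalently $G_n(\Z_p)$) under the Satake isomorphism. First I would recall that the classical Hecke algebra $\mathcal{H}_{p,1}^{\mathrm{class}, (n)}$, built from double cosets $\Gamma^{(n)}(1) M \Gamma^{(n)}(1)$ with $M \in \Delta_{p,1}^{(n)} = G_n(\Z[p^{-1}])^+$, is isomorphic via the usual adelic-classical dictionary to the local Hecke algebra $\mathcal{H}(G_n(\Q_p), G_n(\Z_p))$ of compactly supported $G_n(\Z_p)$-bi-invariant functions on $G_n(\Q_p)$: this is the standard translation between the double-coset-with-multiplicities picture and the convolution-algebra picture, using strong approximation for $\Sp_{2n}$ to move between $\Gamma^{(n)}(1)$ and $G_n(\Z_p)$. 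Commutativity then follows from the Satake isomorphism, which identifies $\mathcal{H}(G_n(\Q_p), G_n(\Z_p))$ with a ring of invariants in a polynomial (group) ring and is in particular commutative; alternatively one can cite the classical fact (Shimura, \cite{shibook1}) that the Hecke algebra of $\Sp_{2n}(\Z)$ is commutative. For the explicit generators, I would invoke the classical result that $\mathcal{H}(\Sp_{2n}(\Z_p), \cdot)$ is a polynomial ring in $n+1$ algebraically independent generators, and match these generators with the double cosets of the diagonal matrices $\mathrm{diag}(1,\dots,1,p,\dots,p)$ and $\mathrm{diag}(1^{n-i}, p^i, (p^2)^{n-i}, p^i)$ for $i = 1, \dots, n$ — these are exactly the cosets indexed by the fundamental dominant coweights (suitably $\GSp$-normalized so the multiplier is a power of $p$), so they generate by the standard theory; this is again in \cite[\S3.3]{shibook1} or the references to Andrianov's work on Siegel Hecke algebras.

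For the second statement — that $i_N : \mathcal{H}_{p,N}^{\mathrm{class}, (n)} \to \mathcal{H}_{p,1}^{\mathrm{class}, (n)}$ is a ring isomorphism when $p \nmid N$ — the key point is that at the prime $p$ the group $K_p^N$ coincides with $G_n(\Z_p)$, so ``adding level structure at primes away from $p$'' does not change the $p$-component of the Hecke algebra. Concretely, I would argue as follows. Injectivity and surjectivity at the level of double cosets: by strong approximation for $\Sp_{2n}$ applied to the congruence conditions defining $\Gamma^{(n)}(N)$ (or directly by an elementary argument with elementary divisors since $p \nmid N$), every double coset $\Gamma^{(n)}(1) M \Gamma^{(n)}(1)$ with $M \in \Delta_{p,1}^{(n)}$ has a representative already lying in $\Delta_{p,N}^{(n)}$ (i.e.\ congruent to $\mathrm{diag}(I_n, \mu_n(M) I_n) \bmod N$), and two such representatives lie in the same $\Gamma^{(n)}(N)$-double coset iff they lie in the same $\Gamma^{(n)}(1)$-double coset — the point being that the ambiguity is governed by $\Gamma^{(n)}(1) \cap M^{-1}\Gamma^{(n)}(1) M$ which, localized away from $p$, meets $\Gamma^{(n)}(N)$ in the expected index. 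This gives a bijection of $\Z$-bases. That it respects multiplication follows because the coset-decomposition numbers that define the product are computed locally at $p$, where $\Gamma^{(n)}(N)$ and $\Gamma^{(n)}(1)$ have the same $p$-adic closure $G_n(\Z_p)$; so the structure constants are identical. Finally one checks $i_N$ preserves the multiplier-weighting implicit in the $\GSp$ setup, which is automatic since $i_N$ is the identity on matrix entries.

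The main obstacle I anticipate is the careful bookkeeping in the second statement: specifically, verifying that the product structure constants are genuinely unchanged by $i_N$. The subtlety is that a product of two double cosets decomposes as a sum of double cosets with multiplicities, and one must check that these multiplicities — which count how many right cosets $\Gamma^{(n)}(N) M$ appear — do not change when $\Gamma^{(n)}(N)$ is enlarged to $\Gamma^{(n)}(1)$. This comes down to showing that the natural map on right-coset spaces $\Gamma^{(n)}(N) \backslash \Gamma^{(n)}(N) M \Gamma^{(n)}(N) \to \Gamma^{(n)}(1) \backslash \Gamma^{(n)}(1) M \Gamma^{(n)}(1)$ is a bijection for $M \in \Delta_{p,N}^{(n)}$, which in turn reduces (again via strong approximation, now applied to $\Gamma^{(n)}(N)$ inside $\Gamma^{(n)}(1)$ with the index concentrated at primes dividing $N$, coprime to $p$) to the statement that $\Gamma^{(n)}(1) = \Gamma^{(n)}(N) \cdot (\Gamma^{(n)}(1) \cap M^{-1}\Gamma^{(n)}(1)M \cdot G_n(\Z_p)\text{-part})$. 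This is a purely group-theoretic fact about congruence subgroups that I would isolate as a preliminary observation; everything else is then formal. Alternatively, the entire lemma can be bypassed by passing to the adelic Hecke algebra $\bigotimes_p' \mathcal{H}(G_n(\Q_p), K_p^N)$, whose $p$-component for $p \nmid N$ is manifestly $\mathcal{H}(G_n(\Q_p), G_n(\Z_p))$ independent of $N$; I would mention this adelic viewpoint as the conceptual reason the lemma is true, while giving the classical argument for completeness.
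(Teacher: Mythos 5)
The paper's ``proof'' is a one-line citation to Andrianov--Zhuravlev (their Lemma 3.3 for commutativity and generation, their Theorems 3.7 and 3.23 for the isomorphism $i_N$), so the relevant comparison is between your sketch and the argument those references supply. Your route --- transporting the classical double-coset algebra to $\mathcal{H}(G_n(\Q_p), G_n(\Z_p))$, using Satake for commutativity, matching the $n{+}1$ fundamental double cosets with generators, and then observing that $K_p^N = G_n(\Z_p)$ when $p \nmid N$ so that all $p$-local combinatorics are insensitive to $N$ --- is sound and is in fact the same adelic picture the paper exploits in the very next lemma via the map $T \mapsto \widetilde{T}$ and [Asgari--Schmidt, Lemma 8]. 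You correctly identify the one genuinely delicate point: that $i_N$ is not merely a bijection on bases but preserves the structure constants, which reduces to showing that the right-coset decomposition of $\Gamma^{(n)}(N) M \Gamma^{(n)}(N)$ is in bijection with that of $\Gamma^{(n)}(1) M \Gamma^{(n)}(1)$ for $M \in \Delta_{p,N}^{(n)}$; this is precisely what Andrianov--Zhuravlev establish. Your sketch defers that group-theoretic lemma (your injectivity claim ``two such representatives lie in the same $\Gamma^{(n)}(N)$-double coset iff...'' and the strong-approximation argument behind the coset bijection need to be written out, and the naive attempt to verify them directly by hand is fiddlier than it looks), but since the paper defers the entire lemma to the literature, your level of detail is comparable. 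The adelic ``bypass'' you mention at the end --- that $\bigotimes'_p \mathcal{H}(G_n(\Q_p), K_p^N)$ has $p$-component manifestly independent of $N$ when $p \nmid N$ --- is the cleanest conceptual explanation, and it would have been reasonable to make it the primary argument rather than an afterthought.
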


\begin{proof}Both assertions are fairly well-known. For a proof of the former, see~\cite[Lemma 3.3]{andzhu}. For a proof of the latter, see for instance~\cite[Thms. 3.7 and 3.23]{andzhu}.
\end{proof}

The ring $\mathcal{H}_{p,1}^{\mathrm{class}, (n)}$ has a canonical involution induced by the map $$\Gamma^{(n)}(1)M\Gamma^{(n)}(1)\mapsto \Gamma^{(n)}(1)M^{-1}\Gamma^{(n)}(1)$$ for each $M \in \Delta_{p,N}^{(n)}$. We denote this involution by $T \mapsto T^*$.

We now define a \emph{right-action} of $\mathcal{H}_{p,1}^{\mathrm{class}, (n)}$ on the space of $p$-spherical elements of $S_k^{(n)}$. If $F \in S_k^{(n)}$ is $p$-spherical and $$T = \Gamma^{(n)}(1)M\Gamma^{(n)}(1), \quad M \in \Delta_{p,1}^{(n)},$$ then we let $N$ denote any integer such that $p \nmid N$, $F \in S_k(\Gamma^{(n)}(N))$ and define \begin{equation}\label{heckeformula}(F |_k T)(Z) =\sum_i \mu_n(M_i)^{\frac{nk}{2}} \det(J(M_i, Z))^{-k} F(M_iZ),\end{equation} where the matrices $M_i$ are given by $$i_N^{-1}\left(\Gamma^{(n)}(1)M\Gamma^{(n)}(1)\right)=\bigsqcup_i \Gamma^{(n)}(N)M_i. $$
From basic properties of the Hecke algebra (see~\cite[Thm. 3.3]{andzhu}), one can check that the mapping $F \mapsto (F |_k T)$ given by~\eqref{heckeformula} extends by linearity to a well-defined right-action of $\mathcal{H}_{p,1}^{\mathrm{class}, (n)}$ on the $p$-spherical elements of $S_k^{(n)}$. For any two $p$-spherical elements $F_1, F_2 \in S_k^{(n)}$, and any $T \in \mathcal{H}_{p,1}^{\mathrm{class}, (n)}$, one has the relation \begin{equation}\label{heckehermone}\langle F_1 |_k T, \  F_2 \rangle =  \ \langle F_1, \ F_2 |_k T^*\rangle.
\end{equation}If $p_1$ and $p_2$ are two primes, and $T_1 \in \mathcal{H}_{p_1,1}^{\mathrm{class}, (n)}$, $T_2 \in \mathcal{H}_{p_2,1}^{\mathrm{class}, (n)}$ are two Hecke operators, then for any $F \in S_k^{(n)}$ that is $p_1$-spherical as well as $p_2$-spherical, we have the commutativity relation $$(F|_k T_1)|_k T_2 = (F|_k T_2)|_k T_1.$$

Moreover, $\mathcal{H}_{p,1}^{\mathrm{class}, (n)}$ preserves the space of $p$-spherical elements in $S_k^{(n)}(\chi)$ for each primitive Dirichlet character $\chi$ of conductor co-prime to $p$. Furthermore, for two $p$-spherical elements $F_1, F_2 \in S_k^{(n)}(\chi)$, and any $M \in \Delta_{p,1}^{(n)}$, one has the relation \begin{equation}\label{heckeherm}\langle F_1 |_k (\Gamma^{(n)}(1)M\Gamma^{(n)}(1)), \  F_2 \rangle = \chi(\mu_n(M)) \ \langle F_1, \ F_2 |_k (\Gamma^{(n)}(1)M\Gamma^{(n)}(1))\rangle.
\end{equation}

\begin{proposition}\label{funprop}\begin{enumerate}
\item Let $F \in S_k(\Gamma^{(n)}(N))$ be an eigenfunction of the Hecke operators $T^{(n)}(p)$ for a subset of the primes with Dirichlet density greater than $1/2$. Then $F\in S_k(\Gamma^{(n), *}(N), \chi)$ for some $\chi$.
\item The space $S_k(\Gamma^{(n), *}(N), \chi)$ has a basis consisting of eigenforms for all the local Hecke algebras $\mathcal{H}_{p,1}^{\mathrm{class}, (n)}$ with $p \nmid N$.

\end{enumerate}
\end{proposition}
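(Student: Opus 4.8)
The plan is to treat the two parts somewhat independently, with part (1) being a counting/averaging argument on Hecke eigenvalues and part (2) a standard diagonalization using a commuting family of self-adjoint operators. For part (1), I would first recall from Lemma~\ref{lemmachardec}(1) that $S_k(\Gamma^{(n)}(N)) = \oplus_{\chi} S_k(\Gamma^{(n),*}(N), \chi)$, so it suffices to show that an eigenfunction $F$ of $T^{(n)}(p)$ for a density $>1/2$ set of primes $p$ cannot have nonzero components in two distinct pieces $S_k(\Gamma^{(n),*}(N), \chi)$ and $S_k(\Gamma^{(n),*}(N), \chi')$. Write $F = \sum_\chi F_\chi$. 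Since $T^{(n)}(p)$ preserves each $S_k(\Gamma^{(n),*}(N), \chi)$ and those spaces are finite-dimensional and pairwise Petersson-orthogonal (and, via~\eqref{heckeherm}, $T^{(n)}(p)$ acts on $S_k(\Gamma^{(n),*}(N), \chi)$ as a normal operator up to the scalar $\chi(\mu_n(M)) = \chi(p^n)$, hence is diagonalizable), each $F_\chi$ decomposes into $T^{(n)}(p)$-eigenforms for those $p$ at which $F$ is spherical. The key point is that if $F$ itself is a $T^{(n)}(p)$-eigenfunction then every nonzero $F_\chi$ must be a $T^{(n)}(p)$-eigenfunction with the \emph{same} eigenvalue $\lambda_p$.

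The crux of part (1) is then the following: if $\chi \neq \chi'$ are distinct primitive characters of conductor dividing $N$, there must be infinitely many primes $p \nmid N$ where no common $T^{(n)}(p)$-eigenvalue is shared — more precisely, I would argue that the set of primes at which the eigenvalue systems of $F_\chi$ and $F_{\chi'}$ agree has density at most $1/2$. This follows because $\chi/\chi'$ is a nontrivial character, so by Chebotarev (or elementary Dirichlet density for characters) the set of primes $p$ with $\chi(p) = \chi'(p)$ has density exactly $1/2$ when $\chi/\chi'$ has order $2$, and strictly less otherwise. The link to Hecke eigenvalues is the standard fact (from the Satake/Hecke relations, e.g.~\cite[Thm. 3.3]{andzhu} or by directly examining the action of $-I_{2n} \cdot \mathrm{diag}(1,\dots,1,p,\dots,p)$) that the $T^{(n)}(p)$-eigenvalue on a form in $S_k(\Gamma^{(n),*}(N),\chi)$ determines $\chi(p)$ up to the ambiguity already controlled; more carefully, two eigenforms in the $\chi$- and $\chi'$-components with equal $T^{(n)}(p)$-eigenvalue for a density-$>1/2$ set of $p$ forces $\chi = \chi'$ via Lemma~\ref{lemmachardec}(1) applied place by place together with the multiplicative independence of characters. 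I would present this via: since $F$ is a $T^{(n)}(p)$-eigenform for $p$ in a set $\mathcal{S}$ of density $>1/2$ and $F$ is $p$-spherical for almost all $p$, for all but finitely many $p \in \mathcal{S}$ each nonzero $F_\chi$ is a $T^{(n)}(p)$-eigenform with eigenvalue $\lambda_p$ independent of $\chi$; comparing the actions on the two components and using that the center acts through $\chi$ versus $\chi'$ gives $\chi(p) = \chi'(p)$ on a density-$>1/2$ set, whence $\chi = \chi'$.

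For part (2), the argument is the familiar simultaneous-diagonalization one. By part (1) combined with~\eqref{heckeherm}, for each prime $p \nmid N$ and each $M \in \Delta_{p,1}^{(n)}$ the operator $F \mapsto F|_k(\Gamma^{(n)}(1)M\Gamma^{(n)}(1))$ acts on the finite-dimensional space $S_k(\Gamma^{(n),*}(N),\chi)$ and is \emph{normal} with respect to the Petersson inner product: indeed~\eqref{heckeherm} shows its adjoint is $\chi(\mu_n(M))^{-1}$ times the operator attached to $M^{-1}$, which commutes with the original by the commutativity of $\mathcal{H}_{p,1}^{\mathrm{class},(n)}$ (Lemma~\ref{heckeiso}). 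Hence each $\mathcal{H}_{p,1}^{\mathrm{class},(n)}$ acts on $S_k(\Gamma^{(n),*}(N),\chi)$ as a commutative algebra of mutually commuting normal operators and so is simultaneously diagonalizable. Finally, since the actions of $\mathcal{H}_{p_1,1}^{\mathrm{class},(n)}$ and $\mathcal{H}_{p_2,1}^{\mathrm{class},(n)}$ for distinct primes $p_1 \neq p_2$ commute (the commutativity relation stated just before Proposition~\ref{funprop}), the whole family $\{\mathcal{H}_{p,1}^{\mathrm{class},(n)} : p \nmid N\}$ is a commuting family of diagonalizable operators on the finite-dimensional space $S_k(\Gamma^{(n),*}(N),\chi)$, hence admits a common eigenbasis. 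Taking the union of such bases over all primitive $\chi$ with $\cond(\chi) \mid N$ (using Lemma~\ref{lemmachardec}(1) again) produces the desired basis of $S_k(\Gamma^{(n),*}(N),\chi)$ — here one applies the construction with $S_k(\Gamma^{(n),*}(N),\chi)$ in place of the full space.

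The main obstacle I anticipate is part (1), specifically making rigorous the step ``equality of $T^{(n)}(p)$-eigenvalues on a density-$>1/2$ set forces $\chi = \chi'$''. One must be careful that $T^{(n)}(p)$ alone (as opposed to the full Hecke algebra including the $T_i^{(n)}(p^2)$) carries enough information about $\chi(p)$; the cleanest route is probably to observe directly from the definition of the $|_k$-action that the scalar by which $-I_{2n}$ acts relates the $\chi$-datum to a sign, and then to extract $\chi(p)$ from the way $T^{(n)}(p)$ interacts with the similitude. If this turns out to be delicate, an alternative is to invoke that each $F_\chi$ can itself be refined into eigenforms for the \emph{full} commuting family before even discussing $T^{(n)}(p)$, then compare the resulting eigensystems; but one still needs the density bound $1/2$ which is intrinsically the ``order-$2$ character'' bound, so some input of the form ``$\{p : \chi(p) = \chi'(p)\}$ has density $\le 1/2$ for $\chi \neq \chi'$'' is unavoidable and should be stated explicitly.
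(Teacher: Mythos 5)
Your high-level strategy for part (1) --- decompose $F=\sum_\chi F_\chi$ via Lemma~\ref{lemmachardec}, observe each nonzero $F_\chi$ is a $T^{(n)}(p)$-eigenform with the same eigenvalue $\lambda_p$, deduce $\chi(p)=\chi'(p)$ for a density-$>1/2$ set, and invoke the density bound for characters --- coincides with the paper's. Your part (2) (simultaneous diagonalization of the commuting family of normal operators furnished by \eqref{heckeherm}) is also the paper's argument. However, the crux of part (1), which you correctly flag as ``the main obstacle,'' is left as a genuine gap in your write-up, and the routes you sketch for closing it do not work.

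The paper's mechanism is to apply \eqref{heckeherm} with $F_1=F_2=F_{\chi_i}$ and the operator $T^{(n)}(p)=\Gamma^{(n)}(1)M\Gamma^{(n)}(1)$ where $M=\mathrm{diag}(1,\dots,1,p,\dots,p)$. Since $\mu_n(M)=p$ (not $p^n$ as you wrote), this gives
\[
\lambda_p\,\langle F_{\chi_i},F_{\chi_i}\rangle
=\langle F_{\chi_i}|_k T^{(n)}(p),\,F_{\chi_i}\rangle
=\chi_i(p)\,\langle F_{\chi_i},\,F_{\chi_i}|_k T^{(n)}(p)\rangle
=\chi_i(p)\,\overline{\lambda_p}\,\langle F_{\chi_i},F_{\chi_i}\rangle,
\]
so $\chi_i(p)=\lambda_p/\overline{\lambda_p}$, which is visibly independent of $i$. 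That single line extracts $\chi_i(p)$ from the $T^{(n)}(p)$-eigenvalue alone and completes part (1). By contrast, your appeal to ``the center acts through $\chi$ versus $\chi'$'' is not a valid substitute: $T^{(n)}(p)$ is not a central element, so knowing its eigenvalue does not \emph{a priori} read off how the center acts; and the $-I_{2n}$ computation only recovers $\chi(-1)$, not $\chi(p)$. Your alternative of first refining each $F_\chi$ into eigenforms for the full local Hecke algebra (so as to use $T_n^{(n)}(p^2)$, whose eigenvalue is essentially $\chi(p)$ by \eqref{centralchar}) does not respect the hypothesis, which only gives eigen-information for $T^{(n)}(p)$. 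You should replace these sketches with the \eqref{heckeherm} computation above; once you do, the rest of your argument --- that distinct characters can agree on at most a density-$1/2$ set of primes --- finishes part (1) exactly as in the paper. (Note that the computation tacitly uses $\lambda_p\neq 0$; both your write-up and the paper treat this as implicit, which is worth at least a remark.)
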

\begin{proof}Both assertions follow from the relation~\eqref{heckeherm}. For the first, we use Lemma~\ref{lemmachardec} to write $F = \sum_{i=1}^k F_i$ where $0\neq F_i \in S_k(\Gamma^{(n), *}(N), \chi_i).$ Let $\lambda_F(p)$ be the Hecke eigenvalue for the action of $T^{(n)}(p)$ on $F$. Then,~\eqref{heckeherm} implies that $\chi_i(p) = \lambda(p)/\overline{\lambda(p)}$ each $1 \le i \le k$. Hence $\chi_i(p) = \chi_j(p)$ for a subset of the primes with Dirichlet density greater than $1/2$. This, by a classical theorem of Hecke, implies that $\chi_i = \chi_j$.

For the second, we use the fact that by~\eqref{heckeherm}, the various local Hecke algebras comprise a system of commuting normal operators on $S_k(\Gamma^{(n), *}(N), \chi)$ and hence can be simultaneously diagonalized.

\end{proof}
\medskip

Next, for any prime $p$, let $\mathcal{H}_p^{(n)}$ denote the unramified  Hecke algebra of $G_n(\Q_p)$; this consists of compactly supported functions $f: G_n(\Q_p) \rightarrow \C$ which are left and right $G_n(\Z_p)$-invariant. The product in $\mathcal{H}_p^{(n)}$ is given by convolution $$f*g(x) = \frac1{\vol(G_n(\Z_p))}\int_{G_n(\Q_p)}f(xy)g(y^{-1})dy.$$

For any $M \in G_n(\Z[p^{-1}])^+$, we let $\widetilde{M} \in \mathcal{H}_p^{(n)}$ denote the characteristic function of $G_n(\Z_p)M G_n(\Z_p)$. By linearity, this gives a map $T \mapsto \widetilde{T}$ from $\mathcal{H}_{p,1}^{\mathrm{class}, (n)}$ to $\mathcal{H}_p^{(n)}$. We have the following lemma.

\begin{lemma}The map $T \mapsto \widetilde{T}$ from $\mathcal{H}_{p,1}^{\mathrm{class}, (n)}$ to $\mathcal{H}_p^{(n)}$ is a ring isomorphism.
\end{lemma}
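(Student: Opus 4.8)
The plan is to exhibit an explicit inverse, or at least to verify the three ring-homomorphism axioms (additivity, multiplicativity, unitality) together with bijectivity, using the structure already set up for the classical Hecke algebra. First I would note that additivity of $T \mapsto \widetilde{T}$ is immediate since the map is defined by $\Z$-linear extension from double cosets, and that it sends the identity double coset $\Gamma^{(n)}(1)I_{2n}\Gamma^{(n)}(1)$ to the characteristic function of $G_n(\Z_p)$, which is the unit of $\mathcal{H}_p^{(n)}$ under the normalized convolution; so the only substantive points are multiplicativity and bijectivity.

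For multiplicativity, the key observation is that both rings describe the same combinatorial object: double cosets of a group modulo a compact-open (resp.\ discrete) subgroup, with structure constants counting the number of ways a product of cosets decomposes. Concretely, $\Delta_{p,1}^{(n)} = G_n(\Z[p^{-1}])^+$ and, by strong approximation together with the fact that $G_n(\Z_p)$ is the closure of $G_n(\Z[p^{-1}])^+ \cap (\text{everything away from } p)$, the natural map sending a coset $\Gamma^{(n)}(1) M$ to $G_n(\Z_p) M$ gives a bijection between the single-coset decompositions of a classical double coset $\Gamma^{(n)}(1) M \Gamma^{(n)}(1)$ and the $G_n(\Z_p)$-coset decomposition of $G_n(\Z_p) M G_n(\Z_p)$. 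Since the multiplication in $\mathcal{H}_{p,1}^{\mathrm{class},(n)}$ is defined (following~\cite[(11.1.3)]{shibook1}) exactly by the same counting of how representatives multiply, and convolution in $\mathcal{H}_p^{(n)}$ with the normalization $\frac{1}{\vol(G_n(\Z_p))}$ computes precisely these same structure constants, the map respects products. I would phrase this as: the classical local Hecke algebra at $p$ and the $p$-adic unramified Hecke algebra are, by the dictionary between the arithmetic group $\Gamma^{(n)}(1)$ sitting in $G_n(\Z[p^{-1}])^+$ and the local pair $(G_n(\Q_p), G_n(\Z_p))$, the \emph{same} abstract Hecke ring; a clean reference for this general principle is~\cite[\S3]{andzhu} or the standard treatment in~\cite{shibook1}.

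For bijectivity, surjectivity follows because $\mathcal{H}_p^{(n)}$ is spanned by characteristic functions $\widetilde{M}$ of double cosets $G_n(\Z_p) M G_n(\Z_p)$ with $M$ a diagonal $p$-power matrix (Cartan decomposition for $G_n(\Q_p)$), and every such $M$ can be taken in $G_n(\Z[p^{-1}])^+ = \Delta_{p,1}^{(n)}$, so it lies in the image. Injectivity follows because distinct diagonal representatives (up to the Weyl group action) give distinct classical double cosets and distinct $p$-adic double cosets, so the map carries a basis to a basis; alternatively one invokes that both rings are free $\Z$-modules on the same index set of dominant cocharacters and the map is the identity on that index set.

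The main obstacle will be bookkeeping the precise identification of the two multiplication rules — in particular getting the volume normalization in the convolution to match the coset-counting convention in~\cite[(11.1.3)]{shibook1} — rather than any conceptual difficulty; this is why I would keep the argument at the level of ``both sides compute the same structure constants'' and cite~\cite{andzhu,shibook1} for the routine verification. One should also be slightly careful that $\Delta_{p,1}^{(n)}$ consists of matrices in $G_n(\Z[p^{-1}])^+$ with \emph{positive} multiplier, matching the fact that the $p$-adic double cosets $G_n(\Z_p) M G_n(\Z_p)$ are indexed by $M$ with $\mu_n(M)$ a (positive) power of $p$; since the sign of the multiplier is detected only at the archimedean place, there is no loss, and I would remark on this explicitly to close the argument.
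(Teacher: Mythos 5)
Your argument is correct, but the paper itself does not give any argument at all: the proof of this lemma in the paper is a one-line citation to Lemma~8 of Asgari--Schmidt~\cite{asgsch}, which establishes exactly this identification of the classical local Hecke algebra at a good prime with the $p$-adic spherical Hecke algebra. What you have done is sketch the first-principles proof that sits behind that citation: additivity and unitality by inspection, multiplicativity via the bijection (induced by strong approximation / elementary divisor theory) between the single-coset decomposition of $\Gamma^{(n)}(1)M\Gamma^{(n)}(1)$ inside $\Delta_{p,1}^{(n)}$ and the $G_n(\Z_p)$-coset decomposition of $G_n(\Z_p)MG_n(\Z_p)$, together with the remark that the normalized convolution reproduces the same structure constants, and bijectivity via the Cartan decomposition (the map is the identity on the common $\Z$-basis indexed by dominant cocharacters). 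Your observation about the sign of the multiplier being invisible at $p$ is the right small point to flag. So the content is correct and complete at the level of a sketch; the only difference from the paper is presentational, with the paper outsourcing the verification to~\cite{asgsch} while you supply the underlying combinatorial argument. If you wanted to tighten the multiplicativity step, the cleanest formulation is the one you allude to at the end: both $\mathcal{H}_{p,1}^{\mathrm{class},(n)}$ and $\mathcal{H}_p^{(n)}$ are the abstract Hecke ring of the pair $(G_n(\Q_p), G_n(\Z_p))$ (equivalently of $(\Delta_{p,1}^{(n)}, \Gamma^{(n)}(1))$), and $T\mapsto\widetilde T$ is the identity under this identification; that avoids any explicit comparison of normalizations.
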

\begin{proof}This is exactly the content of~\cite[Lemma 8]{asgsch}.
\end{proof}
For each prime $p$, we have a \emph{left action} of  $\mathcal{H}_p^{(n)}$ on the space of $p$-spherical elements of $V_k^{(n)}$. This is given by
$$(f\Phi)(g) = \frac1{\vol(G_n(\Z_p))}\int_{G_n(\Q_p)}f(h)\Phi(gh) dh, \quad f \in \mathcal{H}_p^{(n)}, \ \Phi \in V_k^{(n)},$$ where $dh$ is any Haar measure on $G_n(\Q_p)$. If $ \widetilde{M}$ is the characteristic function of $G_n(\Z_p)M G_n(\Z_p) = \sqcup_i M_i G_n(\Z_p)$ where each $M$, $M_i$ are elements of $G_n(\Q_p)$ and $\Phi$ is $p$-spherical, then one can easily verify that $$\widetilde{M}\Phi (g) = \sum_i \Phi(g M_i).$$ It is clear that $\mathcal{H}_p^{(n)}$ preserves the space of $p$-spherical elements in $V_k^{(n)}(\chi)$ for each $\chi$.

\subsection{Automorphic representations} \label{s:adelizationrep}In this subsection, we will get to the heart of the adelization construction, namely, the link between Siegel cusp forms and local and global representations of $G_n$.
\begin{theorem}\label{t:biject} The linear mapping $F \mapsto \Phi_F$ maps each subspace $S_k^{(n)}(\chi)$ isomorphically onto $V_k^{(n)}(\chi)$ and hence defines an isomorphism of vector spaces $S_k^{(n)} \simeq V_k^{(n)}$. Furthermore, this isomorphism has the following properties:

\begin{enumerate}

 \item  For each prime $p$, $F$ is $p$-spherical if and only if $\Phi_F$ is $p$-spherical.

 \item The bijection $F \rightarrow \Phi_F$ is an isometry, i.e., for all $F\in S_k^{(n)}$, we have
     $$\langle F, F \rangle = \langle \Phi_F, \Phi_F \rangle.$$

\item The bijection $F \rightarrow \Phi_F$ is Hecke-equivariant in the following sense: for all $p$-spherical $F$ in $S_k^{(n)}$ and $T \in \mathcal{H}_{p,1}^{\mathrm{class}, (n)}$, we have
     $\Phi_{F |_k T^*}  =  \widetilde{T}\Phi_F.$
\end{enumerate}
\end{theorem}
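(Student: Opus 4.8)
The plan is to verify each assertion essentially by unwinding the definitions, using strong approximation for $\Sp_{2n}$ as the organizing tool. First I would establish that $F \mapsto \Phi_F$ lands in $V_k^{(n)}(\chi)$ when $F \in S_k^{(n)}(\chi)$. Left $G_n(\Q)$-invariance is built into the definition of $\Phi_F$. For the $K_\infty$-equivariance (property (ii) in the definition of $V_k^{(n)}(\chi)$), one uses the cocycle relation $J(g_1 g_2, Z) = J(g_1, g_2 Z) J(g_2, Z)$ together with $\mu_n(g k_\infty) = \mu_n(g)$ for $k_\infty \in K_\infty$; the holomorphy of $F$ forces the differential equation $\mathfrak{p}^{-}_\C \cdot \Phi_F = 0$ (this is the standard translation of the Cauchy--Riemann equations into the $(\mathfrak{g}, K_\infty)$-module language, exactly as in~\cite{asgsch}). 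Right invariance under $\prod_p K_p^N$ is immediate from the definition via the decomposition $g = g_\Q g_\infty k_\mathfrak{f}$. The transformation law~\eqref{siegeldefiningrel2} for $F \in S_k(\Gamma^{(n),*}(N), \chi)$ translates, after writing $z = z_\Q z_\infty k_\mathfrak{f} \in Z(\A)$ and tracking the component $a \in (\Z/N\Z)^\times$, into the central character relation $\Phi_F(zg) = \chi(z)\Phi_F(g)$; here one must check that the adelic Hecke character corresponding to the Dirichlet character $\chi$ is the one that appears. Square-integrability and cuspidality of $\Phi_F$ follow from the cusp-vanishing of $F$ via the standard estimates (rapid decay of cusp forms); I would cite~\cite{asgsch} or~\cite{Fr1991} for the full-level prototype and note that the level-$N$ case is identical.

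For injectivity and surjectivity, the key point is that the value $\Phi_F(g)$ for $g = g_\Q g_\infty k_\mathfrak{f}$ depends only on $g_\infty(iI_n) \in \H_n$ and the automorphy factor, so $\Phi_F$ determines $F$ via $F(Z) = \mu_n(g_\infty)^{-nk/2} \det(J(g_\infty, iI_n))^k \Phi_F(g_\infty)$ for any $g_\infty \in G_n(\R)^+$ with $g_\infty(iI_n) = Z$; this gives injectivity. For surjectivity, given $\Phi \in V_k^{(n)}(\chi)$ with right-invariance level $N$, I would define $F$ on $\H_n$ by this same formula (well-definedness uses property (ii), the $K_\infty$-equivariance, since $K_\infty$ is the stabilizer of $iI_n$), check holomorphy from the $\mathfrak{p}^{-}_\C$-equation, check the automorphy~\eqref{siegeldefiningrel2} under $\Gamma^{(n),*}(N)$ using $\Gamma^{(n),*}(N) = G_n(\Q) \cap G_n(\R)^+ \prod_p K_p^N$ together with left $G_n(\Q)$-invariance and right $\prod_p K_p^N$-invariance of $\Phi$, and check cusp-vanishing from cuspidality of $\Phi$. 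Then $\Phi_F = \Phi$ by construction. Property (1) of the ``furthermore'' is then a matter of comparing the definitions of $p$-sphericality on both sides: $F \in S_k(\Gamma^{(n)}(N))$ with $p \nmid N$ corresponds under strong approximation to right $G_n(\Z_p)$-invariance of $\Phi_F$, since $K_p^N = G_n(\Z_p)$ for $p \nmid N$.

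The isometry (property (2)) is an unfolding computation: writing $\vol(Z(\A) G_n(\Q) \bs G_n(\A))$ in terms of $\vol(\Gamma^{(n)}(N) \bs \H_n)$ via the decomposition $G_n(\A) = G_n(\Q) G_n(\R)^+ \prod_p K_p^N$ — one must be careful to quotient by the center and by $\Gamma^{(n)}(N)$ correctly, and to match the measure $(\det Y)^{k-n-1} dX\, dY$ on $\H_n$ against the Haar measure on $G_n(\R)^+ / (Z(\R)^+ K_\infty)$ — and observing that $|\Phi_F(g)|^2 = \mu_n(g_\infty)^{nk} |\det J(g_\infty, iI_n)|^{-2k} |F(g_\infty(iI_n))|^2$, where the factor $\mu_n(g_\infty)^{nk} |\det J(g_\infty, iI_n)|^{-2k}$ is exactly $(\det \Im(g_\infty(iI_n)))^k$ up to the normalization. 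The normalization by volumes on both sides is precisely what makes the constant come out to $1$. Property (3), Hecke equivariance, is proved by comparing the coset decompositions: if $i_N^{-1}(\Gamma^{(n)}(1) M \Gamma^{(n)}(1)) = \bigsqcup_i \Gamma^{(n)}(N) M_i$, one checks that $G_n(\Z_p) M G_n(\Z_p) = \bigsqcup_i M_i G_n(\Z_p)$ in $G_n(\Q_p)$ (using that the $M_i$, chosen in $\Delta^{(n)}_{p,N}$, represent the same $G_n(\Z_p)$-cosets), and then~\eqref{heckeformula} for $F|_k T^*$ matches $\widetilde{T}\Phi_F(g) = \sum_i \Phi_F(g M_i)$ term by term after translating the automorphy factors; the appearance of the adjoint $T^*$ rather than $T$ comes from the fact that the classical Hecke action is a \emph{right} action built from $\Gamma M \Gamma$ while the adelic action uses $\Phi(gh)$, and the two conventions differ by $M \mapsto M^{-1}$, cf.~\cite[Lemma 8]{asgsch}. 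The main obstacle, I expect, is bookkeeping rather than conceptual: getting all the normalizations (the $\mu_n$-powers, the volume factors, the precise identification of the adelic central character with the Dirichlet character $\chi$, and the $T$ vs.\ $T^*$ convention) to line up exactly, since a sign or exponent slip anywhere would break the clean statements. The level-$N$ generalization of the full-level results in~\cite{asgsch} introduces no new ideas, only the need to carry the groups $K_p^N$ through every step.
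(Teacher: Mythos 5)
Your proposal follows essentially the same route as the paper's proof: verify that $\Phi_F$ satisfies the defining properties of $V_k^{(n)}(\chi)$ by unwinding the strong-approximation decomposition $G_n(\A)=G_n(\Q)G_n(\R)^+\prod_p K_p^N$ (with a citation to \cite{asgsch} for the $\mathfrak{p}^-_\C$-equation, cuspidality, and square-integrability), produce the explicit inverse $\Phi\mapsto F$ to get surjectivity, read off $p$-sphericality directly from $K_p^N = G_n(\Z_p)$ when $p\nmid N$, unfold the $L^2$-norm against the Petersson integral for the isometry (matching \cite[p.~188]{asgsch}), and compare coset decompositions for Hecke equivariance. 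The paper also explicitly records the inverse formula $F(Z) = \det(J(g_\infty, iI_n))^k\Phi(g_\infty)$ for $g_\infty\in\Sp_{2n}(\R)$, consistent with yours after dropping the $\mu_n$-normalization on the symplectic group.

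The one place where you are a touch imprecise is the coset bookkeeping in the Hecke equivariance. You assert that the left coset decomposition $\Gamma^{(n)}(N)M\Gamma^{(n)}(N)=\bigsqcup_i\Gamma^{(n)}(N)M_i$ gives, with the same $M_i$, the decomposition $G_n(\Z_p)MG_n(\Z_p)=\bigsqcup_i M_iG_n(\Z_p)$; this passage from classical left cosets to adelic right cosets is not automatic for arbitrary representatives. The paper invokes \cite[Lemma 2.7.7]{miybook} precisely to choose the $M_i$ so that they serve \emph{simultaneously} as both left and right coset representatives for both the $\Gamma^{(n)}(N)$- and the $G_n(\Z_p)$-double cosets; with such a choice the identity $\Phi_{F|_kT^*}=\widetilde T\Phi_F$ drops out on $G_n(\R)^+$ by moving each $M_i$ through $g_\infty$ via strong approximation (which is exactly where the inverse $M_i^{-1}$, hence the involution $T\mapsto T^*$, appears — your explanation of the $T^*$ convention is correct). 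You should make the existence of such simultaneous representatives explicit rather than tacit.
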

\begin{proof}Let $N$ be an integer such that  $F\in S_k(\Gamma^{(n), *}(N), \chi)$. It is easy to check that $\Phi_{F}(zg) =\chi(z)\Phi_{F}(g)$. By fairly standard methods (see~\cite{asgsch}) we observe that $\Phi_{F}$ satisfies the other properties defining the subspace $V_k^{(n)} (\chi)$.  On the other hand, given $\Phi \in V_k^{(n)}(\chi)$, we can define a function $F$ on $\H_n$ via $F(Z) =
\det(J(g_\infty, iI_n))^{k}\Phi_F(g_\infty)$ where $g_\infty \in \Sp_{2n}(\R)$ is any matrix such that $g_\infty (iI_n) = Z$. Then the properties defining $V_k^{(n)}(\chi)$ imply that $F \in S_k^{(n)}(\chi)$. This completes the proof that the linear mapping $F \mapsto \Phi_F$ is an isomorphism from $S_k^{(n)}(\chi)$ onto $V_k^{(n)}(\chi)$.

Next, let $F \in S_k(\Gamma^{(n)}(N))$ and $p$ any prime. If $p$ does not divide $N$, then the right invariance of $\Phi_F$ by $K_p^N$ shows that $\Phi_F$ is $p$-spherical. Conversely, let $F \in S_k(\Gamma^{(n)}(N))$ and $\Phi_F$ be $p$-spherical. This means that $\Phi_F$ is right-invariant by $G_n(\Z_p)\prod_{q|N, q\neq p}K_q^N \prod_{q \nmid N} G_n(Z_{q})$ which implies that  $F \in S_k(\Gamma^{(n)}(N/(N,p^\infty)))$. In other words, $F$ is $p$-spherical.

To show the isometry, we proceed exactly as in~\cite[p. 188]{asgsch} for each $F \in S_k(\Gamma^{(n),*}(N), \chi)$; the details are omitted.

Finally, to show the Hecke-equivariance, it suffices to show (because of strong approximation) that $\Phi_{F |_k T^*}$ and $\widetilde{T}\Phi_F$ agree on $G_n(\R)^+$. Let $F \in S_k(\Gamma^{(n)}(N))$ and $p \notin N$. Because of Lemma~\ref{heckeiso}, we may assume that $T = \Gamma^{(n)}(1)M\Gamma^{(n)}(1)$ where $M \in \Delta_{p, N}^{(n)}$. By~\cite[Lemma 2.7.7]{miybook}, we may pick matrices $M_i$ in $\Delta_{p, N}^{(n)}$ such that $$\Gamma^{(n)}(N)M\Gamma^{(n)}(N) = \bigsqcup_i \Gamma^{(n)}(N)M_i =  \bigsqcup_i M_i \Gamma^{(n)}(N) $$ and $$G_n(\Z_p)MG_n(\Z_p) = \bigsqcup_i G_n(\Z_p)M_i =  \bigsqcup_i M_i G_n(\Z_p). $$ The Hecke-equivariance now follows directly from the relevant definitions.

\end{proof}

\begin{remark}Given any $\Phi \in V_k^{(n)}$, the unique function $F$ in $S_k^{(n)}$ whose adelization equals $\Phi$ is given by $$F(Z) =
\det(J(g_\infty, iI_n))^{k}\Phi(g_\infty)$$ where $g_\infty \in \Sp_{2n}(\R)$ is any matrix such that $g_\infty (iI_n) = Z$. The Siegel cusp form $F$ is called the \emph{de-adelization} of $\Phi$. If $\Phi$ is fixed by an open compact subgroup $K_\mathfrak{f}$ of $ G_n(\A_\mathfrak{f})$  and we set $\Gamma = G_n(\Q) \bigcap G_n(\R)^+K_\mathfrak{f}$, then the de-adelization $F$ of $\Phi$ belongs to $S_k(\Gamma)$.
\end{remark}

\medskip

Given any unitary Hecke character $\chi$, there is a natural right-regular action of $G_n(\A)$ on  $L^2_0(G_n(\Q)\bs G_n(\A),$ $\chi)$. The space $L^2_0(G_n(\Q)\bs G_n(\A),$ $\chi)$ decomposes discretely under this action into irreducible subspaces. We define an \emph{irreducible cuspidal representation} of $G_n(\A)$ to be an  irreducible subspace of $\oplus_\chi L^2_0(G_n(\Q)\bs G_n(\A),$ $\chi)$ (with the direct sum taken over all unitary Hecke characters $\chi$). So, by our definition\footnote{Note that our definition differs slightly from some other works, such as~\cite{borel-jacquet}.}, an irreducible cuspidal representation (i) comes with a \emph{specific} realization as a subspace of $L^2_0(G_n(\Q)\bs G_n(\A), \chi)$ for some $\chi$, and (ii) is unitary.  If $\pi_1$ and $\pi_2$ are two irreducible cuspidal representations, we use $\pi_1 = \pi_2$ to denote that they are the \emph{same} irreducible subspace while we use the notation $\pi_1 \simeq \pi_2$ to denote that $\pi_1$ and $\pi_2$  are isomorphic as \emph{representations} of $G_n(\A)$ (but their embeddings in $L^2_0(G_n(\Q)\bs G_n(\A), \chi)$ may differ). The multiplicity-one conjecture for $G_n$ (which is still open) predicts that $\pi_1 \simeq \pi_2$ implies $\pi_1 =\pi_2$.

Let $F \in S_k^{(n)}$ and $\Phi_F \in V_k^{(n)}$ be its adelization. Then $\Phi_F$ generates a representation $\pi_F$ of $G_n(\A)$ under the natural right-regular action of $G_n(\A)$ on vectors in $\bigoplus_\chi L^2_0(G_n(\Q)\bs G_n(\A), \chi)$. Any such $\pi_F$ can be written as a direct sum of \emph{finitely} many irreducible cuspidal representations of $G_n(\A)$. The next few propositions deal with various properties that this correspondence between $F$ and $\pi_F$ satisfies. We are particularly interested in the case when $\pi_F$ is itself irreducible. We recall here that any irreducible cuspidal representation $\pi$ of $G_n(\A)$ is factorizable, i.e., we have an isomorphism of representations $\pi \simeq \otimes_v' \pi_v$, where each $\pi_v$ is an irreducible, unitary, admissible representation of $G_n(\Q_v)$.

 Let us briefly describe the representations of $G_n(\R)$ that are relevant for our purposes. For each positive integer $k$, let $\pi_k^{(n)}$ be the irreducible representation of $\Sp_{2n}(\R)$ constructed in~\cite[Sec. 3.5]{asgsch}; it is a lowest weight representation.  Let $\sgn$ denote the sign character from $\R^\times$ to $\pm 1$. It is known that $\pi_k^{(n)}$ extends uniquely to an irreducible representation of $G_n(\R)$ with central character $\sgn^{nk}$. We denote this irreducible representation also by $\pi_k^{(n)}$. If $k>n$, $\pi_k^{(n)}$ is a holomorphic discrete series representation, and if $k=n$, it is a limit of discrete series representations.

\begin{proposition}\label{piinfprop}Let $F \in S_k^{(n)}$ and $ \pi_F$ be the representation of $G_n(\A)$ generated by $\Phi_F$. Let $\pi_F' \simeq \otimes_v'\pi_{F, v}'$ be any irreducible subrepresentation of $\pi_{F} $.  Then, \begin{enumerate}\item $\pi_{F, \infty}' \simeq \pi_k^{(n)}$,
\item The projection\footnote{There is no \emph{canonical} projection from $\pi_F$ to $\pi_F'$ if the latter occurs in the former with multiplicity greater than 1 (although, it is conjectured that this cannot happen); in that case we fix \emph{any} direct decomposition of the isotypic component of $\pi_F'$ into irreducible subspaces (taking one of these equal to the space of $\pi_F'$) and take the resulting projection from $\pi_F$ to $\pi_F'$.} of $\Phi_F$ on the space of $\pi_F'$ maps under the isomorphism $\pi_F' \simeq \pi_{k}^{(n)} \otimes (\otimes_p' \pi_{F, p}')$ to a vector of the form $\phi_\infty \otimes \alpha$ where $\phi_\infty$ is the unique (up to multiple) lowest weight vector in $\pi_k^{(n)}$.\end{enumerate}
\end{proposition}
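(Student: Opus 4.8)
The plan is to move $\Phi_F$ into the irreducible constituent $\pi_F'$ by an equivariant projection, to observe that the archimedean conditions built into the definition of $V_k^{(n)}$ survive this projection, and then to read off the archimedean component of the resulting vector from the factorization $\pi_F'\simeq\otimes_v'\pi_{F,v}'$ by appealing to the structure of the lowest weight representation $\pi_k^{(n)}$.

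First I would fix, exactly as in the footnote to the statement, a $G_n(\A)$-equivariant orthogonal projection $p\colon\pi_F\to\pi_F'$, and set $\Phi':=p(\Phi_F)$; since $\Phi_F$ generates $\pi_F$ while $\ker p$ is a proper closed invariant subspace, $\Phi'\neq 0$. The point is that $p$ commutes with right translation by $G_n(\A_\mathfrak{f})$, with the action of $K_\infty$, and with the differential operators in $\mathfrak{p}^-_\C$ --- these are all part of the $G_n(\A)$-action (or its derived action), $\Phi_F$ is smooth because it lies in $V_k^{(n)}$, and $p$ (being a continuous intertwiner) preserves smoothness. Hence $\Phi'$ inherits from $\Phi_F$ the relations $\Phi'(gk_\infty)=\det(J(k_\infty,iI_n))^{-k}\Phi'(g)$ for $k_\infty\in K_\infty$, the equation $\mathfrak{p}^-_\C\cdot\Phi'=0$, and right-invariance under $\prod_p K_p^N$; together with $\Phi'\in\pi_F'\subseteq L^2_0(G_n(\Q)\bs G_n(\A),\chi)$ this shows $\Phi'\in V_k^{(n)}(\chi)$.

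Next I would fix an isomorphism $\pi_F'\simeq\pi_{F,\infty}'\otimes\pi_{F,\mathrm{fin}}'$ with $\pi_{F,\mathrm{fin}}':=\otimes_p'\pi_{F,p}'$, under which $\Phi'$ corresponds to a finite sum $\sum_{i=1}^m v_i^\infty\otimes v_i^{\mathrm{fin}}$; after relabelling I may assume $v_1^{\mathrm{fin}},\dots,v_m^{\mathrm{fin}}$ are linearly independent. Since $K_\infty$ and $\mathfrak{p}^-_\C$ act only on the archimedean factor, the $K_\infty$-relation above forces each $v_i^\infty$ into the $k_\infty\mapsto\det(J(k_\infty,iI_n))^{-k}$-isotypic subspace of $\pi_{F,\infty}'$ for $K_\infty$, and $\mathfrak{p}^-_\C\cdot\Phi'=0$ then forces $Xv_i^\infty=0$ for all $X\in\mathfrak{p}^-_\C$ and all $i$. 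Thus, since $\Phi'\neq0$, the representation $\pi_{F,\infty}'$ contains a nonzero vector of $K_\infty$-type $k_\infty\mapsto\det(J(k_\infty,iI_n))^{-k}$ that is annihilated by $\mathfrak{p}^-_\C$, and every $v_i^\infty$ is such a vector.

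Finally I would invoke the description of $\pi_k^{(n)}$ in \cite[Sec.~3.5]{asgsch}: using the triangular decomposition $U(\mathfrak{g}_\C)=U(\mathfrak{p}^+_\C)U(\mathfrak{k}_\C)U(\mathfrak{p}^-_\C)$, a nonzero vector of $K_\infty$-type $k_\infty\mapsto\det(J(k_\infty,iI_n))^{-k}$ killed by $\mathfrak{p}^-_\C$ generates an irreducible lowest weight module with that lowest $K_\infty$-type; hence any irreducible unitary representation of $G_n(\R)$ containing such a vector is isomorphic to $\pi_k^{(n)}$, and in $\pi_k^{(n)}$ itself the space of such vectors is the line $\C\phi_\infty$. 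The first assertion gives $\pi_{F,\infty}'\simeq\pi_k^{(n)}$, which is (i); the second gives $v_i^\infty=c_i\phi_\infty$ for scalars $c_i$, so that $\Phi'$ maps to $\phi_\infty\otimes\alpha$ with $\alpha:=\sum_{i=1}^m c_i v_i^{\mathrm{fin}}\in\pi_{F,\mathrm{fin}}'$, which is (ii). I expect this last step to be the main obstacle: the earlier parts are bookkeeping with the adelic--archimedean factorization and the equivariance of $p$, but the identification of $\pi_{F,\infty}'$ and the one-dimensionality of the space of $\mathfrak{p}^-_\C$-annihilated vectors of the correct $K_\infty$-type genuinely rest on the classification of unitary lowest weight representations of $\Sp_{2n}(\R)$, and I would take care to cite \cite{asgsch} in exactly the form I use --- in particular the uniqueness up to scalar of the lowest weight vector, which is already asserted in the statement of the proposition.
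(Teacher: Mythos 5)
Your argument is correct and follows essentially the same route as the paper's proof, which simply cites the analogous result in Asgari--Schmidt: project $\Phi_F$ equivariantly into $\pi_F'$, observe that the defining archimedean conditions of $V_k^{(n)}$ (the $K_\infty$-type and $\mathfrak{p}^-_\C$-annihilation) survive, and invoke the classification of lowest weight representations to identify $\pi_{F,\infty}'\simeq\pi_k^{(n)}$ together with the uniqueness of the lowest weight vector. You have merely spelled out the bookkeeping with the factorization that the paper leaves implicit in its reference to \cite[Lemma 11]{asgsch}.
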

\begin{proof}The proof is essentially identical to that of~\cite[Thm. 2]{asgsch}. The vector $\Phi_F$ is sent to a vector in $\pi_{F, \infty}'$ with the properties listed in~\cite[Lemma 11]{asgsch}. But there is a unique representation of $G_n(\R)$ containing a vector with this properties, namely the representation $\pi_k^{(n)}$ described above.
\end{proof}

Next, we describe the unramified principal series representations at the finite places. For further details about the facts below, we refer the reader to~\cite{asgsch}. Let $\chi_0$, $\chi_1, \ldots$, $\chi_n$ be unramified characters of $\Q_p^\times$. Each tuple $(t_0,t_1, \ldots t_n) \in (\Q_p^\times)^{n+1}$ gives an element $\mathrm{diag}(t_1, \ldots t_n, t_0t_1^{-1}, \ldots t_0 t_n^{-1})$ of the standard maximal torus in  $G_n(\Q_p)$. Thus, the tuple $(\chi_0$, $\chi_1, \ldots$, $\chi_n)$ defines a character on the Borel subgroup of $G_n(\Q_p)$ which is trivial on the unipotent component. Normalized induction from the Borel yields a representation of $G_n(\Q_p)$ which has a unique subquotient representation containing a $G_n(\Z_p)$-fixed vector (such representations are called spherical). We denote this subquotient representation by $\pi(\chi_0$, $\chi_1, \ldots$, $\chi_n)$. All spherical representations are obtained this way. The isomorphism class of $\pi(\chi_0$, $\chi_1, \ldots$, $\chi_n)$ depends only on the tuple $(\chi_0$, $\chi_1, \ldots$, $\chi_n)$ modulo the action of the Weyl group $W$.

The central character of $\pi(\chi_0$, $\chi_1, \ldots$, $\chi_n)$  equals $\chi_0^2\chi_1 \ldots \chi_n$. The representation $\pi(\chi_0$, $\chi_1, \ldots$, $\chi_n)$ contains, up to multiples, a unique $G_n(\Z_p)$-fixed vector. Putting $b_i = \chi_i(p)$, we see that the representation $\pi(\chi_0$, $\chi_1, \ldots$, $\chi_n)$ is determined by the \emph{Satake parameters} $(b_0, b_1, \ldots, b_n) \in (\C^*)^{n+1}/W$. Thus, we get a one-to-one correspondence between spherical representations of $G_n(\Q_p)$ and the space  $(\C^*)^{n+1}/W$. Note that if $n=1$, our convention for Satake parameters differs from the standard one for $\GL_2$, because we have identified $\Q_p^{2}$ with the maximal torus in $\GL_2(\Q_p)$ in a slightly non-standard manner.  The representations $\pi(\chi_0$, $\chi_1, \ldots$, $\chi_n)$ for which each $|b_i| = 1$ are called tempered unramified principal series representations.

\begin{proposition}\label{pifinprop}Let $F \in S_k^{(n)}$ and $p$ be a prime such that $F$ is $p$-spherical and is an eigenfunction for $T^{(n)}(p)$ and $T_i^{(n)}(p^2)$ ($i=1,\ldots,n$) with eigenvalues $\lambda^{(n)}(F,p)$ and $\lambda_i^{(n)}(F, p^2)$ respectively. Let $ \pi_F$ be the representation of $G_n(\A)$ generated by $\Phi_F$ and $\pi_F' \simeq \otimes_v'\pi_{F, v}'$ be any irreducible cuspidal representation that occurs as a subrepresentation of $\pi_{F} $.  Then \begin{enumerate} \item $\pi_{F, p}'$ is an unramified principal series representation of $G_n(\Q_p)$ whose Satake parameters are determined uniquely by $\lambda^{(n)}(F,p)$ and $\lambda_i^{(n)}(F, p^2)$.
\item The projection of $\Phi_F$ on the space of $\pi_F'$ maps under the isomorphism $\pi_F' \simeq \pi_{F, p}' \otimes (\otimes_{v\neq p}' \pi_{F, v}')$ to a vector of the form $\phi_p \otimes \alpha$ where $\phi_p$ is the unique (up to multiples) spherical vector in $\pi_{F, p}'$.
\end{enumerate}
\end{proposition}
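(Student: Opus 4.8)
The plan is to follow the template of the proof of Proposition~\ref{piinfprop}, with the archimedean analysis replaced by an analysis of the spherical Hecke algebra at $p$. Write $\Phi_F'$ for the projection of $\Phi_F$ onto the space of $\pi_F'$; as in the footnote to Proposition~\ref{piinfprop}, this projection is built by first projecting orthogonally onto the $\pi_F'$-isotypic component of $\pi_F$ (a $G_n(\A)$-stable subspace whose orthogonal complement inside $\pi_F$ is also $G_n(\A)$-stable, by unitarity) and then onto one irreducible summand, so $\Phi_F \mapsto \Phi_F'$ is $G_n(\A)$-equivariant. First I would note that $\Phi_F' \neq 0$: otherwise $\Phi_F$ would lie in the $G_n(\A)$-stable complement of $\pi_F'$ in $\pi_F$, contradicting that $\Phi_F$ generates $\pi_F$. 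Since $F$ is $p$-spherical, Theorem~\ref{t:biject}(1) gives that $\Phi_F$ is right-$G_n(\Z_p)$-invariant, and equivariance of the projection shows $\Phi_F'$ is too. Hence $\pi_{F,p}'$ contains a nonzero $G_n(\Z_p)$-fixed vector, so it is a spherical representation, isomorphic to some $\pi(\chi_0,\chi_1,\ldots,\chi_n)$ by the discussion preceding the proposition, and its $G_n(\Z_p)$-fixed vector $\phi_p$ is unique up to scalars. Under $\pi_F' \simeq \pi_{F,p}' \otimes (\otimes_{v\neq p}'\pi_{F,v}')$, the image of $\Phi_F'$ is $G_n(\Z_p)$-fixed, hence of the form $\phi_p \otimes \alpha$ with $\alpha \neq 0$; this proves part~(2).

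For part~(1), I would exploit Hecke-equivariance. Since $F$ is an eigenfunction of $T^{(n)}(p)$ and of each $T_i^{(n)}(p^2)$, and these generate $\mathcal{H}_{p,1}^{\mathrm{class},(n)}$ by Lemma~\ref{heckeiso}, $F$ is an eigenfunction of all of $\mathcal{H}_{p,1}^{\mathrm{class},(n)}$, with eigenvalues determined by $\lambda^{(n)}(F,p)$ and the $\lambda_i^{(n)}(F,p^2)$; in particular $F|_k T^* = \mu_T F$ for each $T$, where $\mu_T$ denotes the eigenvalue of $T^*$. By Theorem~\ref{t:biject}(3), $\widetilde{T}\Phi_F = \Phi_{F|_k T^*} = \mu_T \Phi_F$. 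The action of $\widetilde{T}\in\mathcal{H}_p^{(n)}$ is by right convolution over $G_n(\Q_p)$, which commutes with the $G_n(\A)$-equivariant projection onto $\pi_F'$, so $\widetilde{T}\Phi_F' = \mu_T \Phi_F'$; since the Hecke algebra acts only through the $p$-component, $\phi_p$ is an $\mathcal{H}_p^{(n)}$-eigenvector with eigenvalue $\mu_T$.

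Finally I would invoke the Satake isomorphism: the character by which $\mathcal{H}_p^{(n)}$ acts on the spherical vector of $\pi(\chi_0,\ldots,\chi_n)$ is evaluation at the Satake parameters $(b_0,\ldots,b_n)=(\chi_0(p),\ldots,\chi_n(p))$, and this character determines the $W$-orbit of $(b_0,\ldots,b_n)$, hence the isomorphism class of $\pi_{F,p}'$. Since $\mathcal{H}_p^{(n)} \cong \mathcal{H}_{p,1}^{\mathrm{class},(n)}$ is generated by $\widetilde{T^{(n)}(p)}$ and the $\widetilde{T_i^{(n)}(p^2)}$, the quantities $\lambda^{(n)}(F,p)$ and $\lambda_i^{(n)}(F,p^2)$ (equivalently, the eigenvalues $\mu_T$ on these generators, obtained from the former via the explicit action of the involution $T\mapsto T^*$) pin down this character, hence determine $\pi_{F,p}'$ uniquely. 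The one step requiring genuine care is this last one: tracking the precise relationship between the classical eigenvalues, their images under the involution, and the Satake parameters, so as to justify the word ``uniquely''. This is a formal consequence of the Satake isomorphism together with Lemma~\ref{heckeiso} and the ring isomorphism $T\mapsto\widetilde{T}$, but the explicit bookkeeping (which can be copied, up to the involution, from the analogous computation in~\cite{asgsch}) is the only nontrivial ingredient.
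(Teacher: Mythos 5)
Your proof is correct and follows essentially the same route as the paper's: deduce $p$-sphericality of $\Phi_F$ from Theorem~\ref{t:biject}, use equivariance of the projection to transfer sphericality and Hecke eigenvalues to $\Phi_F'$, and then invoke the Satake isomorphism to read off the Satake parameters from the local Hecke eigenvalues. The paper's proof is much terser (it simply appeals to ``the discussion above''), and you have correctly filled in the supporting details it leaves implicit, including the non-vanishing of the projection, the $G_n(\A)$-equivariance that makes the Hecke and sphericality properties descend to $\Phi_F'$, and the role of the involution $T\mapsto T^*$ coming from Theorem~\ref{t:biject}(3).
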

\begin{proof}Because $F$ is $p$-spherical, so is $\Phi_F$; this follows from Theorem~\ref{t:biject}. It follows  by the discussion above that the local component at $p$ of every irreducible subrepresentation of $\pi_F$ is a spherical representation that is determined uniquely from the Hecke eigenvalues of $F$. The second assertion of the Proposition is a direct corollary of the fact that $\Phi$ is $p$-spherical.
\end{proof}

\begin{remark} It is possible to write down explicitly the Satake parameters at $p$ in terms of the Hecke eigenvalues $\lambda^{(n)}(F,p)$ and $\lambda_i^{(n)}(F, p^2)$ (for $i=1, \ldots, n$), and vice-versa. For example, we have \begin{equation}\lambda^{(n)}(F,p) = p^{\frac{n(n+1)}{4}}b_0\sum_{1 \le i_1 <\ldots <i_r \le n}b_{i_1}\ldots b_{i_r},\end{equation} \begin{equation}\label{centralchar}\lambda_n^{(n)}(F,p^2) = b_0^2 b_1\ldots b_n.\end{equation}
\end{remark}

\begin{proposition}\label{propneareq} The following are equivalent:
\begin{enumerate}
\item $F$ is $p$-spherical and an eigenfunction for $\mathcal{H}_{p,1}^{\mathrm{class}, (n)}$ for almost all primes $p$.

\item  If $\pi_F'$ and $\pi_F''$ are two irreducible cuspidal representations both of which occur as subrepresentations of $\pi_{F} $, then $\pi_{F,p}' \simeq \pi_{F,p}''$ for almost all primes $p$.\footnote{This is often referred to in the literature as $\pi_F'$ and $\pi_F''$ being nearly equivalent.}

\end{enumerate}
\end{proposition}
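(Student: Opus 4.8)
The plan is to move everything to the adelic side via the isomorphism $F\mapsto\Phi_F$ of Theorem~\ref{t:biject} and then argue at the level of the (semisimple, finite length) representation $\pi_F$ generated by $\Phi_F$. The one structural fact I will use repeatedly is that, because the right-regular representation on $\bigoplus_\chi L^2_0$ is unitary, the orthogonal projection onto \emph{any} closed $G_n(\A)$-invariant subspace is $G_n(\A)$-equivariant; in particular it commutes with the convolution action of each unramified Hecke algebra $\mathcal{H}_p^{(n)}$, and it sends the generator $\Phi_F$ to a \emph{nonzero} vector whenever the target subspace actually occurs in $\pi_F$ (otherwise $\Phi_F$ would lie in a proper invariant subspace). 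Combined with the Satake isomorphism (a spherical representation of $G_n(\Q_p)$ is determined by the character through which $\mathcal{H}_p^{(n)}$ acts on its spherical line) and Proposition~\ref{pifinprop}, this lets me read off local components at unramified primes from Hecke eigenvalues.

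For the implication (1) $\Rightarrow$ (2): let $S$ be a finite set of primes outside which $F$ is $p$-spherical and an eigenfunction of $\mathcal{H}_{p,1}^{\mathrm{class}, (n)}$, and fix $p\notin S$. By Theorem~\ref{t:biject}(1) $\Phi_F$ is $p$-spherical, and by the Hecke-equivariance in Theorem~\ref{t:biject}(3) together with $F$ being an eigenform (note $T\mapsto T^*$ is an involution of $\mathcal{H}_{p,1}^{\mathrm{class}, (n)}$, so $F$ is also a $T^*$-eigenform) we get $\widetilde{T}\Phi_F=\lambda_p(T)\Phi_F$ for all $T\in\mathcal{H}_p^{(n)}$, where the character $\lambda_p$ is determined by the classical Hecke eigenvalues of $F$ at $p$. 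Given an irreducible cuspidal subrepresentation $\pi_F'$ of $\pi_F$, let $\Phi'$ be the orthogonal projection of $\Phi_F$ onto $\pi_F'$; it is nonzero, fixed by $G_n(\Z_p)$, and satisfies $\widetilde{T}\Phi'=\lambda_p(T)\Phi'$. Hence $\pi_{F,p}'$ is spherical with Satake parameters those attached to $\lambda_p$, i.e.\ depending only on $F$ and $p$. The same argument applies to $\pi_F''$, so $\pi_{F,p}'$ and $\pi_{F,p}''$ have identical Satake parameters for every $p\notin S$, proving near-equivalence.

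For the implication (2) $\Rightarrow$ (1): recall $F$ is automatically $p$-spherical for almost all $p$, so only the eigenform property is at issue. Decompose $\pi_F=\bigoplus_i\pi_F[\rho_i]$ into isotypic components (finitely many classes $\rho_i$) and let $\Psi_i$ be the nonzero orthogonal projection of $\Phi_F$ onto $\pi_F[\rho_i]$. Enlarge $S$ to a finite set outside which $F$ is $p$-spherical and (using (2), applied to one copy of $\rho_i$ and one of $\rho_{i'}$ for each of the finitely many pairs) $\rho_{i,p}\simeq\rho_{i',p}$ for all $i,i'$. For $p\notin S$: each $\Psi_i$ is a nonzero $G_n(\Z_p)$-fixed vector, so every $\rho_{i,p}$ is spherical, and all of them share one Satake parameter $\alpha_p$; hence $\mathcal{H}_p^{(n)}$ acts on $\pi_F[\rho_i]^{G_n(\Z_p)}$, and therefore on $\Phi_F=\sum_i\Psi_i$, through the single character $\lambda_{\alpha_p}$. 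Transporting this back through Theorem~\ref{t:biject}(3) and the injectivity of $F\mapsto\Phi_F$ yields $F|_kT=\mu(T)F$ for all $T\in\mathcal{H}_{p,1}^{\mathrm{class}, (n)}$, so $F$ is an eigenform at $p$; this holds for almost all $p$.

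The part I expect to require the most care is precisely the possible failure of the multiplicity-one conjecture, which is why I never project onto an individual irreducible constituent (there is no canonical such projection, as the paper's own footnotes to Propositions~\ref{piinfprop} and \ref{pifinprop} observe) but instead only onto closed invariant subspaces — the isotypic components, or a subrepresentation explicitly named in statement (2) — where the unitary structure furnishes an equivariant orthogonal projection for free. Everything else is routine bookkeeping with Theorem~\ref{t:biject}, the ring isomorphism $\mathcal{H}_{p,1}^{\mathrm{class}, (n)}\simeq\mathcal{H}_p^{(n)}$, and the Satake correspondence.
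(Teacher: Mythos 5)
Your proof is correct and takes essentially the same route as the paper: pass to the adelic side, use the Hecke-equivariance of $F\mapsto\Phi_F$ from Theorem~\ref{t:biject}, project $\Phi_F$ onto invariant subspaces, and read off Satake parameters from the resulting spherical eigenvectors (which is exactly what Proposition~\ref{pifinprop} packages for the direction (1) $\Rightarrow$ (2)). Your only deviation is projecting onto isotypic components rather than individual irreducible constituents in (2) $\Rightarrow$ (1), a minor refinement that sidesteps the non-canonicity the paper's footnote flags but otherwise yields the same computation.
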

\begin{proof}Suppose that $F$ is an eigenfunction for $\mathcal{H}_{p,1}^{\mathrm{class}, (n)}$ for almost all primes $p$. Then Prop.~\ref{pifinprop} tells us that $\pi_{F,p}' \simeq \pi_{F,p}''$ for all such $p$. Conversely, if the second assertion of the Proposition holds, let $S$ be a set of places including the real place such that if $p \notin S$ then the local components $\pi_{F,p}'$ of the various irreducible subrepresentations $\pi_F'$  of $\pi_{F} $ are all isomorphic and the projections of $\Phi_F$ on these invariant subspaces are all spherical at $p$. It follows that each of these projections is an eigenfunction for $\mathcal{H}_p^{(n)}$ with the same eigenvalue. So $\Phi_F$ itself is an eigenfunction for $\mathcal{H}_p^{(n)}$. By the Hecke equivariance proved in Theorem~\ref{t:biject}, it follows that $F$  is an eigenfunction for $\mathcal{H}_{p,1}^{\mathrm{class}, (n)}$.
\end{proof}
Let $F \in S_k^{(n)}$ be such that $\pi_F$ is \emph{irreducible}.\footnote{Note that this implies that $F \in S_k^{(n)}(\chi)$ where $\chi$ is the central character of $\pi_F$.} Then Prop.~\ref{propneareq} tells us that $F$ is an eigenfunction of the local Hecke algebras for almost all primes. Conversely, let $F \in S_k^{(n)}$ be an eigenfunction of the local Hecke algebras for almost all primes. If $n=1$, then  strong multiplicity one for $\GL(2)$ and Prop.~\ref{propneareq} together imply that $\pi_F$ is  irreducible. But if $n>1$ we cannot make any such claim. However, if $F$ belongs to $S_k(\Sp_{2n}(\Z))$ and is an eigenfunction of the local Hecke algebras for almost all primes, then one can prove that $\pi_F$ is irreducible. This fact (which is not true for $F$ in $S_k(\Gamma^{(n)}(N))$ when $N>1$) was proved by Narita, Pitale and Schmidt in a recent work~\cite{NPS}, and is quite surprising because multiplicity one for such $\pi_F$ is an open problem.

Suppose $F \in S_k^{(n)}$ and $\pi_F$ is irreducible. Then we know that $\pi_{F, \infty}$ is isomorphic to the specific representation $\pi_k^{(n)}$ mentioned in Prop.~\ref{piinfprop}. It is worth asking if every $\pi$ with this property is obtained as $\pi_F$ for some $F\in S_k^{(n)}$. The next result asserts that the answer is yes, at least for $n=1$ and $n=2$. Because of its importance, we state this as a theorem.

\begin{theorem}\label{t:deadelize}Suppose $n \leq 2$. Let $\pi\simeq\otimes_v'\pi_v$ be an irreducible cuspidal representation of $G_n(\A)$ with central character $\chi$ such that $\pi_\infty$ is isomorphic to the representation $\pi_k^{(n)}$ constructed above. Then there exists $F\in S_k^{(n)}(\chi)$ such that \begin{enumerate}
\item $F$ is $p$-spherical and an eigenfunction for $\mathcal{H}_{p,1}^{\mathrm{class}, (n)}$ for all primes $p$ where $\pi_p$ is unramified.

\item $\pi_F = \pi$.

\end{enumerate}

\end{theorem}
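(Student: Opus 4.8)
The plan is to reduce the statement to the known surjectivity of the adelization map onto $V_k^{(n)}$ (Theorem~\ref{t:biject}) by producing, inside the given abstract representation $\pi$, a vector whose "shape" at every place matches the vectors that arise as adelizations. Concretely, I would argue as follows. By Proposition~\ref{piinfprop} and Proposition~\ref{pifinprop}, a Siegel cusp form's adelization, projected to an irreducible constituent, is a pure tensor that is the lowest weight vector at $\infty$ and spherical at every unramified finite place. So the natural approach is: first choose a good vector $\phi = \otimes_v \phi_v$ in the space of $\pi$, with $\phi_\infty$ the (unique up to scalar) lowest weight vector of $\pi_k^{(n)}$, and $\phi_p$ the (unique up to scalar) spherical vector at each $p$ where $\pi_p$ is unramified; at the finitely many ramified finite places, take $\phi_p$ to be \emph{any} nonzero vector fixed by a small enough principal congruence subgroup. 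Then $\phi$ is fixed by $\prod_p K_p^N$ for $N$ divisible by a high enough power of each ramified prime; by construction it satisfies the weight condition under $K_\infty$ and is annihilated by $\mathfrak{p}^-_\C$ (these being exactly the conditions cutting out $\pi_k^{(n)}$'s lowest weight line). Hence $\phi \in V_k^{(n)}(\chi)$, and by Theorem~\ref{t:biject} there is a unique $F \in S_k^{(n)}(\chi)$ with $\Phi_F = \phi$; moreover $\pi_F$, the $G_n(\A)$-span of $\phi$, equals $\pi$ since $\pi$ is irreducible and $\phi \neq 0$. This gives property (ii) outright, and property (i) follows because $\phi$ is spherical at every unramified $p$, so by the Hecke-equivariance in Theorem~\ref{t:biject} (or Proposition~\ref{pifinprop}) $F$ is $p$-spherical and a Hecke eigenform there.

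The one genuine input needed — and the reason for the hypothesis $n \le 2$ — is the existence of a \emph{nonzero} vector in $\pi$ that is simultaneously the lowest weight vector at $\infty$ and spherical at all unramified places; equivalently, that the line $V_k^{(n)}(\chi) \cap \pi$ (a space of $K_\infty$-isotypic, $\mathfrak{p}^-_\C$-annihilated, $\prod K_p^N$-fixed vectors) is nonzero. At the unramified finite places this is automatic (a spherical vector always exists). At the archimedean place it is automatic from $\pi_\infty \simeq \pi_k^{(n)}$. The subtle point is at the ramified finite places: one needs to know that $\pi_p$ has a \emph{nonzero} vector fixed by \emph{some} compact open subgroup of the form $K_p^N$ — but every smooth admissible representation of $G_n(\Q_p)$ has nonzero vectors fixed by a sufficiently small compact open subgroup, and $K_p^N$ for large $N$ contains the principal congruence subgroups, so this too is automatic. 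So where does $n \le 2$ enter? It enters in guaranteeing that the abstract $\pi$ we are handed actually sits inside $\oplus_\chi L^2_0(G_n(\Q)\bs G_n(\A),\chi)$ \emph{with the archimedean component literally $\pi_k^{(n)}$} in a way compatible with the adelization recipe — more precisely, the cited result of Roberts–Schmidt~\cite{roberts-schmidt13} is what lets us handle the $\GSp_4$ case where the local representation theory (Iwahori/paramodular structure of ramified $\pi_p$) is completely understood; for $n=1$ it is classical. I would therefore invoke \cite{roberts-schmidt13} (and for $n=1$, classical newform/adelization theory) precisely to certify that a vector of the required local shape exists globally.

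Let me restate the key steps in order. \textbf{Step 1:} Fix $\phi_\infty$ to be the lowest weight vector of $\pi_\infty \simeq \pi_k^{(n)}$, and for each finite $p$ fix $\phi_p$ to be the spherical vector if $\pi_p$ is unramified and otherwise an arbitrary nonzero vector fixed by a deep enough principal congruence subgroup; set $\phi = \otimes_v \phi_v$, a nonzero vector of $\pi$. \textbf{Step 2:} Check that $\phi$, regarded as an element of $\oplus_\chi L^2_0$, lies in $V_k^{(n)}(\chi)$: it is left $G_n(\Q)$-invariant (as a vector in $L^2_0$), cuspidal, transforms under $K_\infty$ by $\det(J(\cdot,iI_n))^{-k}$ and is annihilated by $\mathfrak{p}^-_\C$ (these cut out the lowest weight line in $\pi_k^{(n)}$), and is $\prod_p K_p^N$-fixed for suitable $N$ (divisible by $\cond$ of ramified data and by high powers of ramified primes). \textbf{Step 3:} Apply Theorem~\ref{t:biject}: there is a unique $F \in S_k^{(n)}(\chi)$ with $\Phi_F = \phi$. \textbf{Step 4:} Since $\pi$ is irreducible and $\phi \neq 0$ generates $\pi$ under right translation, $\pi_F = \pi$; and since $\phi$ is spherical at every unramified $p$, $\Phi_F$ is $p$-spherical there, whence (by Theorem~\ref{t:biject}(i) and (iii)) $F$ is $p$-spherical and a $\mathcal{H}_{p,1}^{\mathrm{class},(n)}$-eigenform there. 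The expected main obstacle is purely Step 1/Step 2 at ramified places — one must be slightly careful that "$K_p^N$-fixed for some $N$" genuinely produces a vector in $V_k^{(n)}$ (condition (iv) in the definition of $V_k^{(n)}(\chi)$ is exactly this), and that the global realization of $\pi$ inside $L^2_0$ is the relevant one; for $\GSp_4$ this is where the structure theory from \cite{roberts-schmidt13} is invoked, and it is why the theorem is stated only for $n \le 2$.
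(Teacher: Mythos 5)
Your overall architecture matches the paper's: build $\phi = \otimes_v\phi_v$ with $\phi_\infty$ the lowest weight vector, $\phi_p$ spherical at unramified places, and some suitable $\phi_p$ at ramified places; then pull back through the isomorphism of Theorem~\ref{t:biject}. But there is a genuine error in the step you declare ``automatic,'' and it sits precisely where the hypothesis $n\le 2$ is actually used.

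You write that at a ramified finite place one only needs a vector fixed by \emph{some} $K_p^N$, and that this is automatic since $\pi_p$ has vectors fixed by arbitrarily small compact open subgroups, ``and $K_p^N$ for large $N$ contains the principal congruence subgroups.'' That containment is real but points the wrong way. Recall
\[
K_p^N = \left\{ g \in G_n(\Z_p)\;:\; g \equiv \mat{I_n}{}{}{aI_n}\pmod{N},\ a\in\Z_p^\times \right\}.
\]
For every $N$, the multiplier map $\mu_n\colon K_p^N\to\Z_p^\times$ is surjective; in particular $K_p^N$ always contains all of the elements $\mat{I_n}{}{}{aI_n}$ with $a\in\Z_p^\times$, so the groups $K_p^N$ do \emph{not} shrink to the identity as $N$ grows. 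They contain the principal congruence subgroups, so a vector fixed by a principal congruence subgroup is not thereby fixed by $K_p^N$ — if anything, you would need $K_p^N$ to be \emph{contained} in the stabilizer of your vector, and it never is contained in a small group. So smoothness alone gives you nothing here. The existence of a nonzero $K_p^N$-fixed vector in an arbitrary irreducible admissible $\pi_p$ is exactly the nontrivial local input the paper flags: for $n=1$ it is (a form of) $\GL_2$ local newform theory, and the paper explicitly warns that this is \emph{not} an immediate consequence of admissibility (contrary to Casselman's assertion in~\cite{cass73}); for $n=2$ it is the Roberts--Schmidt result~\cite{roberts-schmidt13} on local Bessel models. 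For $n\ge 3$ it is open, which is the sole reason the theorem is restricted to $n\le 2$. Your subsequent paragraph, which vaguely attributes the role of $n\le 2$ to the global realization of $\pi$ in $L^2_0$, is also off target: cuspidality is a hypothesis, so $\pi$ already sits in $\oplus_\chi L^2_0$ for all $n$; the obstruction is purely local, at the ramified finite places, and it is exactly the step you asserted was free. Once you replace that erroneous step with the correct local input (newform/Bessel model theory at ramified places), the rest of your argument — Steps 1--4, pulling back through Theorem~\ref{t:biject}, using irreducibility to get $\pi_F=\pi$, and sphericity at unramified places for the Hecke-eigenform claim — is the same as the paper's and is correct.
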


\begin{proof}By Theorem~\ref{t:biject}, it suffices to prove that the space $V_{\pi}$ of $\pi$, considered as a subspace of $L^2_0(G_n(\Q)\bs G_n(\A), \chi)$, contains a vector $\Phi$ such that a) $\Phi$ is in $V_k^{(n)}(\chi)$, and b)  $\Phi$ is $p$-spherical and an eigenfunction for $\mathcal{H}_{p}^{(n)}$ for all primes $p$ where $\pi_p$ is unramified.

Fix an isomorphism $\sigma: \pi \rightarrow \otimes_v'\pi_v$. Let $\phi_\infty$ be  the unique (up to multiple) vector in $\pi_\infty \simeq \pi_k^{(n)}$ that satisfies the properties listed in~\cite[Lemma 11]{asgsch}. For all primes $p$ where $\pi_p$ is unramified, let $\phi_p$ be the unique (up to multiple) spherical vector in $\pi_{p}$.

Now, let $p$ be a prime where $\pi_p$ is ramified. To choose $\phi_p$ for such primes, we use a relatively deep fact, namely that there exists a vector $\phi_p$ in the space of $\pi_p$ that is fixed by the subgroup $K_p^{N_p}$ for some integer $N_p$. If $n=1$ this is a well-known result that can be proved as a direct consequence of either the existence and uniqueness of Kirillov models or the theory of local zeta integrals (but contrary to the assertion of Casselman in~\cite{cass73}, it is \emph{not} an immediate consequence of admissibility). If $n=2$, the existence of such a $\phi_p$ follows from the recent work of Roberts and Schmidt~\cite{roberts-schmidt13} on the theory of local Bessel models.

 Now, let $\phi = \otimes_p \phi_p \otimes \phi_\infty$ and let $\Phi = \sigma^{-1}(\phi)$. Then, from construction, $\Phi$ is in $V_k^{(n)}(\chi)$. Moreover, for all unramified $p$, $\phi_p$ lies in the one-dimensional space of $G_n(\Z_p)$-fixed vectors in $\pi_p$; it follows  that $\Phi$ is $p$-spherical and an eigenfunction for $\mathcal{H}_{p}^{(n)}$ at such $p$. Thus, $\Phi$ satisfies the desires properties mentioned at the beginning of this proof.
\end{proof}

\begin{remark}The author expects that an analogue of the above result holds for all $n$; a proof will possibly involve a close look at either the local Fourier-Jacobi model, or some other nice unique model for local representations of $G_n(\Q_p)$. \end{remark}

\subsection{Some results on arithmeticity}\label{s:arithmeticity}We end our treatment of adelization with a brief discussion of arithmeticity. Let $(\sigma, V_\sigma)$ be an irreducible representation of the group $G_n(\Q_p)$ on a complex vector space $V$. Let $\tau$ be any automorphism of $\C$ and $t: V_\sigma \rightarrow V'$ be a $\tau$-linear isomorphism. We define a representation $( \leftexp{\tau}{\sigma}, V_{\leftexp{\tau}{\sigma}})$ of $G_n(\Q_p)$ as follows: set  $V_{\leftexp{\tau}{\sigma}} = V'$ and define the action of $G_n(\Q_p)$ on  $V_{\leftexp{\tau}{\sigma}}$ by $\leftexp{\tau}{\sigma}(g) = t \circ \sigma(g) \circ t^{-1}$. It is clear that the isomorphism class of  $\leftexp{\tau}{\sigma}$ does not depend on the choice of $V'$ or $\tau$.

If $\pi_p$ is an irreducible, admissible representation of $G_n(\Q_p)$ for some prime $p$, then we let $\Q(\pi_p)$ denote the fixed field of the group of automorphisms $\tau$ of $\C$ for which $\leftexp{\tau}{\pi_p} \simeq \pi_p$. If $\pi_\mathfrak{f} \simeq \otimes_p'\pi_p$ is an irreducible, admissible representation of $G_n(\A_\mathfrak{f})$ then we define another irreducible admissible representation $\leftexp{\tau}{\pi_\mathfrak{f}}$ of $G_n(\A_\mathfrak{f})$ by $\leftexp{\tau}{\pi_\mathfrak{f}} \simeq \otimes'_p \leftexp{\tau}{\pi}_p $. This gives a left action of $\Aut(\C)$ on the set of isomorphism classes of irreducible, admissible representations of $G_n(\A_\mathfrak{f})$. We let $\Q(\pi_\mathfrak{f})$ denote the fixed field of the group of automorphisms $\tau$ of $\C$ for which $\leftexp{\tau}{\pi_\mathfrak{f}} \simeq \pi_\mathfrak{f}$. Clearly $\Q(\pi_\mathfrak{f}) = \coprod_p \Q(\pi_p).$ If $\pi \simeq \pi_\infty \otimes \pi_\mathfrak{f}$ is an irreducible, admissible representation of $G_n(\A)$ then we define another irreducible admissible representation $\leftexp{\tau}{\pi}$ of $G_n(\A)$ by $\leftexp{\tau}{\pi} \simeq \pi_\infty \otimes\leftexp{\tau}{\pi_\mathfrak{f}} $.

\begin{proposition}\label{p:blaharam}Let $\pi \simeq \otimes_v'\pi_v$ be an irreducible cuspidal representation of $G_n(\A)$ such that $\pi_\infty$ is isomorphic to the representation $\pi_k^{(n)}$ constructed above. Assume that $k \ge n$ and $nk$ is even. Then
\begin{enumerate}
\item $\Q(\pi_\mathfrak{f})$ is a finite extension of $\Q$ and is contained in $\Q^{\CM}$.
\item For any automorphism $\tau$ of $\C$, $\tau (\Q(\pi_\mathfrak{f})) = \Q(\leftexp{\tau}\pi_\mathfrak{f}).$

\item For any automorphism $\tau$ of $\C$, there exists an  irreducible cuspidal representation of $G_n(\A)$ that is isomorphic to $\leftexp{\tau}{\pi}$. Moreover, if $F \in S_k^{(n)}$ is such that $\pi_F = \pi$, then we have $$\pi_{{}^\tau\!F} \simeq  \leftexp{\tau}{\pi},$$ where ${}^\tau\!F$ is defined as in~\eqref{autcact}.
\end{enumerate}
\end{proposition}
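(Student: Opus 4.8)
The plan is to reduce all three parts to statements about a single Siegel cusp form $F$ with $\pi_F=\pi$, and to drive those statements with two ingredients already in place: the $\Q$-rationality of Fourier coefficients together with the resulting $\Aut(\C)$-action $G\mapsto{}^\tau\!G$ on $S_k^{(n)}$ (see~\eqref{sh:rationalbasis} and~\eqref{autcact}), and the $\Q$-rationality of the local Hecke algebras $\mathcal{H}_{p,1}^{\mathrm{class},(n)}$, which are generated over $\Z$ by the double cosets of Lemma~\ref{heckeiso} and therefore preserve $S_k(\Gamma^{(n)}(N);\Q)$ and commute with $G\mapsto{}^\tau\!G$. Fix $F\in S_k^{(n)}$ with $\pi_F=\pi$; such an $F$ exists by Theorem~\ref{t:deadelize} when $n\le 2$, and in general may be taken to be the de-adelization of any vector in $\pi\cap V_k^{(n)}(\chi)$ (the hypotheses $k\ge n$ and $nk$ even put $\pi_\infty\simeq\pi_k^{(n)}$ in the framework of Section~\ref{s:adelization}). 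Choose $N$ with $F\in S_k(\Gamma^{(n)}(N))$; since $\pi$ is irreducible, Proposition~\ref{propneareq} shows that $F$ is $p$-spherical and an eigenform for $\mathcal{H}_{p,1}^{\mathrm{class},(n)}$ for all primes $p$ outside a finite set $S$ containing the divisors of $N$. We note once and for all that, because $\pi=\pi_F$ is irreducible, the vector $\Phi_F=\phi_\infty\otimes\bigotimes_p\phi_p$ generates $\pi_\mathfrak{f}$ as a $G_n(\A_\mathfrak{f})$-representation; in particular the isomorphism class of $\pi_\mathfrak{f}$ is already visible at a finite level, say as a constituent of the $\prod_p K_p'$-invariants (with $K_p'=G_n(\Z_p)$ for $p\notin S$ and $K_p'=K_p^{N_p}$ for $p\in S$).

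For part~(3), fix $\tau\in\Aut(\C)$. The $\Q$-rationality of $\mathcal{H}_{p,1}^{\mathrm{class},(n)}$ gives, for $p\notin S$, that ${}^\tau\!F$ is again $p$-spherical with $\lambda^{(n)}({}^\tau\!F,p)=\tau(\lambda^{(n)}(F,p))$ and $\lambda_i^{(n)}({}^\tau\!F,p^2)=\tau(\lambda_i^{(n)}(F,p^2))$. On the other hand, if one transports the spherical vector of $\pi_p$ through a $\tau$-linear isomorphism onto $\leftexp{\tau}{\pi_p}$, then for every $\Q$-valued $\phi\in\mathcal{H}_p^{(n)}$ the eigenvalue gets conjugated by $\tau$; so, since a spherical representation of $G_n(\Q_p)$ is determined by its Hecke eigenvalues (Proposition~\ref{pifinprop}), we get $\pi_{{}^\tau\!F,p}\simeq\leftexp{\tau}{\pi_p}$ for all $p\notin S$, while at $\infty$ both sides are $\pi_k^{(n)}$. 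Thus $\pi_{{}^\tau\!F}$ is nearly equivalent to $\leftexp{\tau}{\pi}$. To promote this to a genuine isomorphism, I would show that the operation $G\mapsto{}^\tau\!G$ is induced, via the adelization isomorphism of Theorem~\ref{t:biject}, by a $\tau$-semilinear and $G_n(\A_\mathfrak{f})$-equivariant automorphism $\Theta_\tau$ of the $G_n(\A_\mathfrak{f})$-submodule $\mathcal{A}$ of the cuspidal spectrum $\bigoplus_\chi L^2_0(G_n(\Q)\bs G_n(\A),\chi)$ generated by $V_k^{(n)}$: here $\mathcal{A}$ is given a $\Q$-structure stable under $G_n(\A_\mathfrak{f})$ and Hecke by gluing the rational structures $S_k(\Gamma^{(n)}(N);\Q)$ over all $N$ and transporting them through $F\mapsto\Phi_F$. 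Any such $\Theta_\tau$ carries an irreducible $G_n(\A_\mathfrak{f})$-subrepresentation isomorphic to $\sigma$ to one isomorphic to $\leftexp{\tau}{\sigma}$; applying this to the (irreducible) subrepresentation generated by $\Phi_F$, namely $\pi_\mathfrak{f}$, shows that the one generated by $\Phi_{{}^\tau\!F}$ is irreducible and isomorphic to $\leftexp{\tau}{\pi_\mathfrak{f}}$. Hence $\pi_{{}^\tau\!F}$ is irreducible with $\pi_{{}^\tau\!F}\simeq\leftexp{\tau}{\pi}$, which also proves the existence statement in~(3).

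Part~(2) is formal: $\Aut(\C)$ acts on isomorphism classes of irreducible admissible representations of $G_n(\A_\mathfrak{f})$ on the left, so the stabilizer of $\leftexp{\tau}{\pi_\mathfrak{f}}$ is $\tau$ times the stabilizer of $\pi_\mathfrak{f}$ times $\tau^{-1}$, whence $\Q(\leftexp{\tau}{\pi_\mathfrak{f}})=\tau(\Q(\pi_\mathfrak{f}))$. For the finiteness in~(1): by the remark at the end of the first paragraph, $\pi_\mathfrak{f}$ occurs among the finitely many simple $G_n(\A_\mathfrak{f})$-constituents of the finite-dimensional $\Q$-structured space $\mathcal{A}^{\prod_p K_p'}$, on which $\Theta_\tau$ acts permuting constituents, so $\{\tau:\leftexp{\tau}{\pi_\mathfrak{f}}\simeq\pi_\mathfrak{f}\}$ has finite index in $\Aut(\C)$ and $\Q(\pi_\mathfrak{f})=\coprod_p\Q(\pi_p)$ is a number field. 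For the inclusion in $\Q^{\CM}$ it suffices to see that this number field is stable under complex conjugation $c$, equivalently that each $\Q(\pi_p)$ is. But $\leftexp{c}{\pi_p}\simeq\overline{\pi_p}\simeq\widetilde{\pi_p}$, the first isomorphism by definition of $c$-conjugation and the second by unitarity of the local component of a cuspidal representation; since taking contragredients commutes with $\tau$-conjugation and is an involution, $\Q(\widetilde{\pi_p})=\Q(\pi_p)$, and so $c(\Q(\pi_p))=\Q(\leftexp{c}{\pi_p})=\Q(\pi_p)$ by the local form of~(2). A number field stable under $c$ is totally real or CM, hence contained in $\Q^{\CM}$.

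The step I expect to be the real obstacle is the promotion from ``nearly equivalent'' to ``isomorphic'' in part~(3): from the Hecke eigenvalues one only reads off $\pi_{{}^\tau\!F,v}$ for $v\notin S$, and strong multiplicity one for $G_n$ is not available when $n\ge 2$. The argument above sidesteps this by means of the automorphism $\Theta_\tau$, and the content that must really be verified is that the Fourier-coefficient rational structures of~\eqref{sh:rationalbasis} for varying $N$, together with the classical action of $G_n(\Q)^+$ on Siegel cusp forms and its (known) effect on Fourier coefficients, assemble into one $G_n(\A_\mathfrak{f})$-stable $\Q$-structure on $\mathcal{A}$ compatible with Theorem~\ref{t:biject}; this is where the results of~\cite{shibook2} are used. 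A secondary point is that the reduction to $\pi=\pi_F$ rests on de-adelization, which is established in this paper only for $n\le 2$.
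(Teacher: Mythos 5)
The paper's own proof is very short: it cites Blasius--Harris--Ramakrishnan~\cite{blaharam} (Thm.~4.4.1 for part~(1), Thm.~4.2.3 for part~(3)), which treats Galois conjugation of cuspidal representations on Shimura varieties via coherent cohomology and a carefully normalized $\Aut(\C)$-action on canonical models. Your proposal instead tries to re-derive the key content from the two $\Q$-rationality inputs already present in the paper (rationality of Fourier coefficients, rationality of the unramified Hecke algebras) by building a $\tau$-semilinear, $G_n(\A_\mathfrak{f})$-equivariant automorphism $\Theta_\tau$ of the cuspidal subspace $\mathcal{A}$ generated by $V_k^{(n)}$, restricting on $V_k^{(n)}$ to $\Phi_F\mapsto\Phi_{\leftexp{\tau}{F}}$. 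This is a genuinely different route, and you are right that part~(2) is formal, that the $\Q^{\CM}$-inclusion would follow from $\leftexp{c}{\pi_p}\simeq\overline{\pi_p}\simeq\widetilde{\pi_p}$ once one knows $\Q(\pi_\mathfrak{f})$ is a number field, and that everything would follow from the existence of such a $\Theta_\tau$.

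The gap is precisely in that last step, and it is not a gap that can be closed by ``verifying'' that the Fourier-coefficient $\Q$-structures glue into a $G_n(\A_\mathfrak{f})$-stable $\Q$-structure compatible with Theorem~\ref{t:biject} --- that assertion is in fact false. Under the adelization isomorphism, right translation by $g_\mathfrak{f}\in G_n(\A_\mathfrak{f})$ corresponds (after strong approximation writes $g_\mathfrak{f}=\gamma k_\mathfrak{f}$ with $\gamma\in G_n(\Q)^+$) to the classical slash action $F\mapsto F|_k\gamma^{-1}$. Taking $\gamma^{-1}=\mat{I_n}{B}{0}{I_n}$ with $B$ rational symmetric, one has $F|_k\gamma^{-1}(Z)=F(Z+B)$, whose Fourier coefficients are $a(F,T)\,e^{2\pi i\Tr(TB)}$. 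Thus $\leftexp{\tau}{(F|_k\gamma^{-1})}$ has coefficients $\tau(a(F,T))\,\tau(e^{2\pi i\Tr(TB)})$, whereas $(\leftexp{\tau}{F})|_k\gamma^{-1}$ has coefficients $\tau(a(F,T))\,e^{2\pi i\Tr(TB)}$; these differ whenever $\tau$ moves the relevant roots of unity. So $\Phi_F\mapsto\Phi_{\leftexp{\tau}{F}}$ does not commute with the $G_n(\A_\mathfrak{f})$-action, and no $G_n(\A_\mathfrak{f})$-equivariant $\tau$-semilinear $\Theta_\tau$ extending it exists. The correct $\Aut(\C)$-action (what the paper denotes $\leftexp{(\tau)}{\pi}$) involves a nontrivial twist to absorb exactly these cyclotomic factors, and constructing it with the required equivariance and comparing it to the naive action on Fourier coefficients (the compatibility used in the last step of the paper's proof, via~\cite[(1.1.3)]{blaharam}) is the substance of BHR's theorem, not a verification from~\cite{shibook2}. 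Your finiteness argument for part~(1) rests on the same $\Theta_\tau$, so it inherits the gap; and as you note, the reduction to $\pi=\pi_F$ rests on de-adelization, which the paper only establishes for $n\le2$, whereas BHR's approach is uniform in $n$ because it works directly on the automorphic side.
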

\begin{proof}The first assertion follows from~\cite[Thm. 4.4.1]{blaharam}. The second assertion is trivial. The last assertion follows from~\cite[Thm. 4.2.3]{blaharam} where the existence of a certain irreducible cuspidal representation $\leftexp{(\tau)}{\pi}$, isomorphic to $\leftexp{\tau}{\pi}$, is asserted for each $\tau \in \Aut(\C)$. The representation $\leftexp{(\tau)}{\pi}$ is constructed by viewing the holomorphic vectors in the space of $\pi$ as sections of certain holomorphic vector bundles and defining an action of $\Aut(\C)$ on these sections; see also~\cite[Section 3]{morimoto} for a fairly explicit description for $n=2$ (which is the only case needed for the main results of this paper). In the special case $\pi=\pi_F$, when one takes the holomorphic vector to be (an appropriate multiple of) $\Phi_F$, then the action of $\Aut(\C)$ on this vector is compatible with that on the Fourier coefficients of $F$; this follows from~\cite[(1.1.3)]{blaharam} (see also~\cite[equation (34)]{morimoto} for an explicit expression when $n=2$). This implies that $\pi_{{}^\tau\!F} \simeq \leftexp{(\tau)}{\pi}_F$ since $\Phi_F$ generates the irreducible representation $\pi_F$. Since we already have $\leftexp{(\tau)}{\pi}_F \simeq \leftexp{\tau}{\pi}_F$, it follows that $\pi_{{}^\tau\!F} \simeq \leftexp{\tau}{\pi}_F.$
\end{proof}

\begin{remark} The above proposition gives a left action of $\Aut(\C)$ on the set of \emph{isomorphism classes} of irreducible cuspidal representations of $ G_n(\A)$ whose component at infinity is equal to $\pi_k^{(n)}$ with $k \ge n$ and $kn$ even.
\end{remark}

Let $S$ be any subset of the finite places (i.e., primes) of $\Q$. For any irreducible admissible representation $\pi_S=\otimes_{p\in S}'\pi_p$ of $G_n(\A_S) = \otimes_{p\in S}' G_n(\Q_p)$, we denote $\Q(\pi_{S}) = \coprod_{p \in S}\Q(\pi_p)$ (put $\Q(\pi_{\emptyset}) = \Q)$ and we let $\mathfrak{f}$ denote the set of all finite primes; this makes the above definition consistent with the earlier one for $\Q(\pi_\mathfrak{f})$.  For any congruence subgroup $\Gamma$ of $G_n(\Q)$, and any admissible $\pi_S$ as above, we  define
$S_k(\Gamma;  \pi_S)$ to be equal to the subspace of $S_k(\Gamma)$ consisting of $\{0\}$ and all those $F$ which have the property that $\pi'_{F,p} \simeq \pi_p$  for each irreducible subrepresentation $\pi'_F$  of $\pi_F$  and each $p\in S.$ Note that whenever the Dirichlet density of $S$ is at least $1/2$, the space $S_k(\Gamma^{(n)}(N); \pi_S)$ is contained inside  $S_k(\Gamma^{(n), *}(N), \chi)$ for some $\chi$ (with $\chi$ depending on $\pi_S$); this follows from Proposition~\ref{funprop}. By Proposition~\ref{p:blaharam}, it follows that $\Q(\pi_{S})$ is a subfield of a CM field whenever $S_k(\Gamma^{(n)}(N); \pi_S) \neq 0$ for some $k$, $N$, $n$ with $k \ge n$ and $nk$ even.
 For any subfield $L$ of $\C$, we denote $S_k(\Gamma; L, \pi_S) = S_k(\Gamma;  \pi_S) \cap S_k(\Gamma; L)$.

 \begin{remark}\label{remarkneareq}Note that Proposition~\ref{propneareq} can be rephrased as follows: $F \in S_k(\Gamma^{(n)}(N))$ is an eigenfunction of the local Hecke algebras at almost all primes if and only if $F$ belongs to $S_k(\Gamma^{(n)}(N); \pi_S)$ for some subset $S$ of the primes containing almost all primes and some irreducible admissible representation $\pi_S$ of $G_n(\A_S)$.
 \end{remark}

\begin{lemma}\label{lpeterssonequiva}Let $S$ be a subset of the primes containing almost all primes and $\pi_S=\otimes_{p\in S}'\pi_p$ be an irreducible admissible representation of $G_n(\A_S) = \otimes_{p\in S}' G_n(\Q_p)$. Let $N$, $k$, $n$ be positive integers such that $k \ge n$ and $nk$ is even. Suppose that $F \in S_k(\Gamma^{(n)}(N); \Q(\pi_S), \pi_S)$ and $G \in S_k(\Gamma^{(n)}(N))$. Then for all $\sigma \in \Aut(\C)$, we have \begin{equation}\label{peterssonequiva} \sigma \left( \frac{\langle G, F\rangle}{\langle F,F \rangle}  \right) =  \frac{\langle \leftexp{\sigma}G, \leftexp{\sigma}F\rangle}{\langle \leftexp{\sigma}F,\leftexp{\sigma}F \rangle}.
\end{equation}

\end{lemma}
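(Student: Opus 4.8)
The plan is to reduce the statement to a decomposition of $G$ according to the representation $\pi_S$ and an application of Proposition~\ref{p:blaharam} together with the compatibility of the $\Aut(\C)$-action on Siegel cusp forms with Petersson products. First I would decompose the (finite-dimensional) space $S_k(\Gamma^{(n)}(N))$ as an orthogonal direct sum of the subspaces $S_k(\Gamma^{(n)}(N); \pi_S^{(j)})$, where $\pi_S^{(1)}, \ldots, \pi_S^{(m)}$ runs over the (finitely many) isomorphism classes of irreducible admissible representations of $G_n(\A_S)$ that occur; this uses Proposition~\ref{funprop} and the simultaneous diagonalizability of the commuting normal Hecke operators $\mathcal{H}_{p,1}^{\mathrm{class},(n)}$ for $p \in S$. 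Since $F$ lies entirely in the block attached to our fixed $\pi_S$ and the blocks are pairwise orthogonal, in the ratio $\langle G, F\rangle / \langle F, F\rangle$ we may replace $G$ by its orthogonal projection $G_0$ onto the $\pi_S$-block, and $G_0 \in S_k(\Gamma^{(n)}(N); \pi_S)$.

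Next I would handle the $\sigma$-equivariance of this projection. The projection $G \mapsto G_0$ is a polynomial in the Hecke operators $T^{(n)}(p), T_i^{(n)}(p^2)$ for $p \in S$ with coefficients that are algebraic numbers (they are built from the Hecke eigenvalues cutting out the $\pi_S$-block, all of which lie in $\Q(\pi_S) \subseteq \Q^{\CM}$ by Proposition~\ref{p:blaharam}(i)). The key point is that the $\Aut(\C)$-action on Siegel cusp forms intertwines the classical Hecke action in a $\sigma$-twisted way: $\leftexp{\sigma}{(F|_k T)} = \leftexp{\sigma}{F}|_k \leftexp{\sigma}{T}$, where $\leftexp{\sigma}{T}$ is the Hecke operator obtained by applying $\sigma$ to the (algebraic) matrix entries/coefficients defining $T$ — this follows from the explicit double-coset formula~\eqref{heckeformula} and the definition~\eqref{autcact} of $\leftexp{\sigma}{F}$. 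Applying Proposition~\ref{p:blaharam}(iii) (identifying $\pi_{\leftexp{\sigma}{F}}$ with $\leftexp{\sigma}{\pi_F}$, hence $\pi_{\leftexp{\sigma}{F}, p} \simeq \leftexp{\sigma}{\pi_p}$), one sees that $\leftexp{\sigma}{F} \in S_k(\Gamma^{(n)}(N); \leftexp{\sigma}{\pi_S})$ and that the projection onto the $\leftexp{\sigma}{\pi_S}$-block of $\leftexp{\sigma}{G}$ is exactly $\leftexp{\sigma}{G_0}$.

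Finally I would invoke the known arithmeticity of Petersson products together with the period formalism. Writing $\langle G, F\rangle / \langle F, F\rangle$ as an honest ratio of inner products of forms in $S_k(\Gamma^{(n)}(N); \Q(\pi_S), \pi_S)$-related spaces, the relation~\eqref{peterssonequiva} becomes a statement that the normalized Petersson product $\langle \cdot, \cdot\rangle / \langle F, F\rangle$ restricted to the $\pi_S$-block is an $\Aut(\C)$-equivariant pairing in the appropriate sense. Concretely, $F$ has algebraic Fourier coefficients in $\Q(\pi_S)$, and one can expand $G_0$ in a basis of $S_k(\Gamma^{(n)}(N); \Q(\pi_S), \pi_S)$ with coefficients that are ratios of Petersson products of this type; the statement then follows by linearity once one knows it for a single pair, and the normalization by $\langle F, F\rangle$ (rather than an abstract period) is precisely what kills the transcendental ambiguity, since $\Phi_F$ generates the irreducible $\pi_F$ and the $\Aut(\C)$-action on $\Phi_F$ is compatible with that on the Fourier coefficients as recorded in the proof of Proposition~\ref{p:blaharam}.

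I expect the main obstacle to be the second paragraph: making precise and correct the $\sigma$-twisted Hecke-equivariance of the $\Aut(\C)$-action (i.e., that the cutting-out projector has algebraic coefficients in $\Q(\pi_S)$ and transforms to the projector for $\leftexp{\sigma}{\pi_S}$), since this is where the rationality field $\Q(\pi_S)$ and Proposition~\ref{p:blaharam} must be combined carefully; the orthogonal-decomposition and final-linearity steps are essentially formal once that is in place.
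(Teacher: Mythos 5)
Your proposal reduces the statement to something formally similar but never actually closes the argument; there is a genuine gap, and it is not where you expect it.

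The paper's own proof has a completely different shape: it is a direct reduction to Garrett's theorem \cite{gar2}, which already gives \eqref{peterssonequiva} whenever the form $F$ being normalized against has \emph{totally real} algebraic Fourier coefficients. The only work is to handle the case $\pi_S \not\simeq \bar{\pi}_S$, where $\Q(\pi_S)$ is a genuine CM field: there the paper observes that $\langle F,\bar F\rangle = 0$, sets $F_1 = (F+\bar F)/2$, $F_2 = (F-\bar F)/(2i)$, notes that both $F_1, F_2$ have totally real Fourier coefficients and are eigenfunctions (with equal eigenvalues) of the local Hecke algebra for the subgroup $\Sp_{2n}$ at almost all places, applies \cite{gar2} to each, and reassembles via $F = F_1 + iF_2$ together with $\langle F,F\rangle = 2\langle F_i,F_i\rangle$. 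The transcendental input is entirely contained in Garrett's theorem.

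Your proposal never invokes any such ingredient. Your first two paragraphs — projecting $G$ onto the $\pi_S$-block $G_0$, checking that $\leftexp{\sigma}{G_0}$ is the projection of $\leftexp{\sigma}{G}$ onto the $\leftexp{\sigma}{\pi_S}$-block — are sound and in fact implicitly subsumed in the paper's proof of Theorem~\ref{t:aithmeticity} (which is deduced \emph{from} this lemma). But your third paragraph is circular: you propose to "expand $G_0$ in a basis of $S_k(\Gamma^{(n)}(N);\Q(\pi_S),\pi_S)$ with coefficients that are ratios of Petersson products of this type; the statement then follows by linearity once one knows it for a single pair." The $\Aut(\C)$-equivariance of $\langle B_i,F\rangle/\langle F,F\rangle$ for basis elements $B_i$ with algebraic Fourier coefficients is exactly the content of the lemma, so you have only translated the problem, not solved it. The claim that "the normalization by $\langle F,F\rangle$ is precisely what kills the transcendental ambiguity" is the crux and is asserted without argument. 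Thus the "essentially formal" part you relegate to the end is the actual theorem; the "main obstacle" you flag (the Hecke-projector bookkeeping) is routine. To repair the proof you would need to bring in the input from \cite{gar2} (or an equivalent analytic result on algebraicity of Petersson ratios) and, when $\Q(\pi_S)$ is CM rather than totally real, perform the real-and-imaginary-part decomposition $F = F_1 + iF_2$ as the paper does.
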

\begin{proof} We consider two cases. First, assume $\pi_S \simeq \bar{}\pi_S$ where $\bar{}$ denotes complex conjugation. In this case, the field $\Q(\pi_S)$ and hence the Fourier coefficients of $F$ are totally real; so~\eqref{peterssonequiva} follows from the main result of~\cite{gar2}. Next, we consider the case where $\pi_S \not \simeq \bar{}\pi_S$. In this case, we note that $\langle F, \bar{F}\rangle =0$ (since their adelizations give rise to disjoint subsets of irreducible cuspidal representations). Put $F_1 = (F + \bar{F})/2$ and $F_2 = (F - \bar{F})/(2i)$. Thus, $F_1$ and $F_2$ have totally real algebraic Fourier coefficients. Moreover, we have $\langle F, F\rangle = 2 \langle F_1, F_1 \rangle = 2 \langle F_2, F_2 \rangle$, and $\langle F_1, F_2 \rangle = 0$. Finally  $F_1$ and $F_2$ are both eigenfunctions of the local Hecke algebra for the subgroup $\Sp_{2n}$ of $G_n$ at almost all places (with the same Hecke eigenvalues); this follows from~\eqref{heckeherm}. Now the result follows from applying the main result of~\cite{gar2} to $F_1$ as well as to $F_2$ and using that $F= F_1 + i F_2$.
\end{proof}

The following elementary lemma (which is basically Hilbert 90 for $\GL_n$) will be used in the proof of Proposition~\ref{rationalbasis}.

\begin{lemma}\label{lemma:linear}Let $V$ be a subspace of $\C^n$ and $L$ a subfield of $\C$. Then, the following conditions are equivalent.
\begin{enumerate}
\item $V$ has a basis $\{v_1,\ldots,v_m\}$ with each $v_i \in V \cap L^n$.
   \item $V$ is invariant under the action of $\Aut(\C/L)$.
\end{enumerate}
\end{lemma}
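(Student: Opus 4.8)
The plan is to prove the two implications separately, with (i)$\Rightarrow$(ii) being immediate and (ii)$\Rightarrow$(i) handled by a Galois-descent argument built on the uniqueness of reduced row echelon form. For (i)$\Rightarrow$(ii): if $\{v_1,\ldots,v_m\}$ is a basis of $V$ with each $v_i\in V\cap L^n$, then every $\sigma\in\Aut(\C/L)$ (acting coordinatewise on $\C^n$) fixes each $v_i$, so for $w=\sum_i c_iv_i\in V$ we get $\sigma(w)=\sum_i\sigma(c_i)v_i\in V$; hence $\sigma(V)\subseteq V$, and applying this to $\sigma^{-1}$ as well gives $\sigma(V)=V$.

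For (ii)$\Rightarrow$(i) I would first record the elementary field-theoretic fact that $\C^{\Aut(\C/L)}=L$ for any subfield $L\subseteq\C$: given $\beta\in\C\setminus L$, if $\beta$ is algebraic over $L$ its minimal polynomial (separable, since we are in characteristic zero) has a second root $\beta'\neq\beta$ in $\overline L\subseteq\C$, and the $L$-isomorphism $\beta\mapsto\beta'$ extends to an element of $\Aut(\C/L)$ moving $\beta$; if $\beta$ is transcendental over $L$, choose a transcendence basis of $\C/L$ containing $\beta$ and use the $L$-automorphism sending $\beta\mapsto2\beta$. Now set $d=\dim_\C V$, pick any basis of $V$, let $A$ be the $d\times n$ matrix with these vectors as rows, and let $R$ be the reduced row echelon form of $A$. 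Row operations preserve the row space, so the $d$ nonzero rows of $R$ form a basis of $V$; the crucial point is that $R$ depends only on the row space $V$, not on the chosen basis. For $\sigma\in\Aut(\C/L)$, the matrix $\sigma(R)$ obtained by applying $\sigma$ entrywise is still in reduced row echelon form (the defining conditions only involve the entries $0$ and $1$ and their positions, all preserved by $\sigma$) and has row space $\sigma(V)=V$ by hypothesis; by uniqueness of the reduced row echelon form of $V$ we conclude $\sigma(R)=R$. Therefore every entry of $R$ lies in $\C^{\Aut(\C/L)}=L$, and the nonzero rows of $R$ give a basis of $V$ contained in $V\cap L^n$, which is (i).

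The only input that is not purely formal is the identity $\C^{\Aut(\C/L)}=L$, and its transcendental case does require a transcendence basis (hence the axiom of choice); everything else is the observation that reduced row echelon form is a canonical invariant of a subspace and so is unchanged by the field automorphisms fixing $L$. I do not anticipate a genuine obstacle here; the argument is a clean instance of the ``Hilbert 90 for $\GL_n$'' philosophy, and the main thing to get right is simply the appeal to uniqueness of reduced row echelon form together with the $\Aut(\C/L)$-invariance of $V$.
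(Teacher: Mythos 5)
Your proof is correct and takes essentially the same route as the paper: both arguments rest on putting a basis of $V$ into reduced row echelon form and using the $\Aut(\C/L)$-invariance of $V$ to conclude the entries lie in $L$. The only differences are cosmetic — the paper rederives inline (via the leading-coefficient bookkeeping) the piece of the uniqueness-of-RREF statement it needs, and leaves the identity $\C^{\Aut(\C/L)}=L$ implicit, whereas you invoke uniqueness as a black box and spell out the fixed-field fact; your version is if anything slightly more self-contained.
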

\begin{proof}If $V$ has a basis comprising of elements of $L^n$ then it is clear that $V$ is invariant under the action of $\Aut(\C/L)$. To prove the converse, let us start with any subspace $V$ of $\C^n$. By Gauss-Jordan elimination, we can find a basis of $V$ such that the basis matrix is in reduced row-echelon form. Concretely, this means that $V$ has a basis $\{v_1, v_2,\ldots,v_m\}$ where if we write $v_i = (v_{i,j})_{j=1,\ldots,n}$ then the matrix $B = (v_{i,j})$ has the following properties:
\begin{itemize}
\item The leading coefficient (first non-zero number from the left) of a row is always strictly to the right of the leading coefficient of the row above it.
\item Every leading coefficient is 1 and is the only nonzero entry in its column.
\end{itemize}
Let $1\le i \le m$ and $\tau \in \Aut(\C/L)$. Then by assumption, $\tau(v_i) = \sum_{j=1}^m c_j v_j$ for some complex numbers $c_j$. Looking at the leading coefficient of $v_i$ we see that $c_i=1$. Moreover, if $c_j \neq 0$ for some $j\neq i$, then by considering the leading coefficient of the $j$th row we deduce that $c_j= 0.$ Thus $\tau(v_i) = v_i$ for all $\tau \in \Aut(\C/L)$. Since $\tau$ was arbitrary and $i$ was any integer between 1 and $m$, it follows that all entries of $B$ lie in $L$. This completes the proof that the basis $\{v_1,\ldots,v_m\}$ of $V$ satisfies the property that each $v_i \in V \cap L^n$.
\end{proof}

\begin{proposition}\label{rationalbasis}Let $S$ be any set of primes and $\pi_S=\otimes_{p\in S}'\pi_p$ be an irreducible admissible representation of $G_n(\A_S) = \otimes_{p\in S}' G_n(\Q_p)$. Let $N$, $k$, $n$ be positive integers such that $k \ge n$ and $nk$ is even. Then there is a basis  of $S_k(\Gamma^{(n)}(N); \pi_S)$ consisting of cuspforms lying in $S_k(\Gamma^{(n)}(N);$ $\Q(\pi_S),  \pi_S)$.
\end{proposition}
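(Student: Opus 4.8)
The plan is to reduce the statement to Lemma~\ref{lemma:linear} by coordinatizing $S_k(\Gamma^{(n)}(N))$ through Fourier coefficients. Put $M=\dim_\C S_k(\Gamma^{(n)}(N))$. Using the rationality statement~\eqref{sh:rationalbasis}, I would fix a $\Q$-basis $F_1,\dots,F_M$ of $S_k(\Gamma^{(n)}(N);\Q)$ and choose $T_1,\dots,T_M\in M_n^{\mathrm{sym}}(\Q)$ so that the matrix $A=(a(F_i,T_j))_{i,j}$ lies in $\GL_M(\Q)$ (possible since Fourier coefficients separate the linearly independent $F_i$). Then $F\mapsto(a(F,T_1),\dots,a(F,T_M))$ is a $\C$-linear isomorphism of $S_k(\Gamma^{(n)}(N))$ onto $\C^M$; by~\eqref{autcact} it carries $F\mapsto{}^\sigma\!F$ to the coordinatewise action of $\sigma$, and since $A\in\GL_M(\Q)$ it carries $S_k(\Gamma^{(n)}(N);L)$ onto $L^M$ for every subfield $L$ of $\C$. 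Applying Lemma~\ref{lemma:linear} (with $\C^M$ in place of $\C^n$) to the subspace corresponding to $S_k(\Gamma^{(n)}(N);\pi_S)$ and with $L=\Q(\pi_S)$, and observing that $S_k(\Gamma^{(n)}(N);\pi_S)\cap S_k(\Gamma^{(n)}(N);\Q(\pi_S))=S_k(\Gamma^{(n)}(N);\Q(\pi_S),\pi_S)$, it suffices to prove that $S_k(\Gamma^{(n)}(N);\pi_S)$ is stable under $\Aut(\C/\Q(\pi_S))$.

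To establish that, I would first reduce to forms generating an irreducible representation. Let $F\in S_k(\Gamma^{(n)}(N);\pi_S)$ and write $\pi_F=\bigoplus_j\pi^{(j)}$ as a finite direct sum of irreducible cuspidal representations, with $\Phi_F=\sum_j\Phi_F^{(j)}$ the corresponding decomposition. Each $\Phi_F^{(j)}$ still satisfies the conditions cutting out $V_k^{(n)}$ --- invariance under $\prod_pK_p^N$, the $K_\infty$-equivariance, the vanishing $\mathfrak{p}^{-}_\C\cdot\Phi=0$, and membership in the cuspidal space with a well-defined central character --- all of which are preserved by projection onto a $G_n(\A)$-invariant subspace; hence by Theorem~\ref{t:biject} and the remark following it, $\Phi_F^{(j)}=\Phi_{F^{(j)}}$ for a unique $F^{(j)}\in S_k(\Gamma^{(n)}(N))$ with $\pi_{F^{(j)}}=\pi^{(j)}$. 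Since $\pi^{(j)}$ occurs as an irreducible subrepresentation of $\pi_F$, we have $\pi^{(j)}_p\simeq\pi_p$ for all $p\in S$, so each $F^{(j)}$ lies in $S_k(\Gamma^{(n)}(N);\pi_S)$; as $F=\sum_jF^{(j)}$, it follows that $S_k(\Gamma^{(n)}(N);\pi_S)$ is spanned by those $G\in S_k(\Gamma^{(n)}(N))$ for which $\pi_G$ is irreducible and $\pi_{G,p}\simeq\pi_p$ for all $p\in S$. By additivity of $F\mapsto{}^\sigma\!F$ it is enough to show ${}^\sigma\!G$ is again of this kind for each $\sigma\in\Aut(\C/\Q(\pi_S))$.

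For such a $G$, Proposition~\ref{piinfprop} gives $\pi_{G,\infty}\simeq\pi_k^{(n)}$, so (using $k\ge n$ and $nk$ even) Proposition~\ref{p:blaharam}(3) applies and yields that $\pi_{{}^\sigma\!G}$ is irreducible and isomorphic to $\leftexp{\sigma}{\pi_G}$. Hence for $p\in S$ we get $(\pi_{{}^\sigma\!G})_p\simeq\leftexp{\sigma}{(\pi_{G,p})}\simeq\leftexp{\sigma}{\pi_p}$, and since $\sigma$ restricts to the identity on $\Q(\pi_p)\subseteq\Q(\pi_S)$, the definition of $\Q(\pi_p)$ gives $\leftexp{\sigma}{\pi_p}\simeq\pi_p$; thus $(\pi_{{}^\sigma\!G})_p\simeq\pi_p$ for all $p\in S$. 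Combined with ${}^\sigma\!G\in S_k(\Gamma^{(n)}(N))$ (the discussion after~\eqref{autcact}), this gives ${}^\sigma\!G\in S_k(\Gamma^{(n)}(N);\pi_S)$, and running over a spanning set proves the required $\Aut(\C/\Q(\pi_S))$-stability; applying this to $\sigma$ and to $\sigma^{-1}$ shows the space is genuinely stable, not merely sent into itself.

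The step I expect to be the main obstacle is the reduction to the irreducible case. On the technical side one must check carefully that the projections $\Phi_F^{(j)}$ remain in $V_k^{(n)}$, so that they are honestly adelizations of classical Siegel cusp forms and Proposition~\ref{p:blaharam} becomes applicable to them; on the conceptual side, the reason this decomposition is forced at all is that Proposition~\ref{p:blaharam}(3) is available only for irreducible $\pi$, and the (open) failure of multiplicity one for $G_n$ prevents one from working directly with the possibly reducible $\pi_F$. One should also keep in mind the minor point that, when $\pi_F$ is not multiplicity free, an irreducible subrepresentation of $\pi_{{}^\sigma\!F}$ need only be \emph{isomorphic} to some $\leftexp{\sigma}{\pi^{(j)}}$ rather than equal to it --- which is harmless since the condition defining $S_k(\Gamma^{(n)}(N);\pi_S)$ is stated up to isomorphism. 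Everything else is routine bookkeeping with the rational structure together with the appeal to Lemma~\ref{lemma:linear}.
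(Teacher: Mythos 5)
Your proof is correct and follows the same strategy as the paper: coordinatize $S_k(\Gamma^{(n)}(N))$ via a rational basis (the paper picks a $\Q$-basis and uses the coefficient vectors directly; your choice of Fourier coefficients $a(\cdot,T_j)$ is equivalent since $A\in\GL_M(\Q)$), reduce the claim to $\Aut(\C/\Q(\pi_S))$-stability of the corresponding subspace, and invoke Lemma~\ref{lemma:linear}. The one place where you go beyond the paper is that you spell out the reduction to $G$ with $\pi_G$ irreducible (via the decomposition $\pi_F=\bigoplus_j\pi^{(j)}$ and the check that the projections $\Phi_F^{(j)}$ remain in $V_k^{(n)}$), a step the paper compresses into the phrase ``by Proposition~\ref{p:blaharam} and the linearity of the adelization map''; your fuller treatment is worth having since Proposition~\ref{p:blaharam}(3) genuinely requires $\pi_F$ irreducible.
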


\begin{proof}Let $r$ denote the dimension of $S_k(\Gamma^{(n)}(N))$. Put $L=\Q(\pi_S)$ and let $L^r$ be the subset of $\C^r$ consisting of those vectors all of whose entries lie in $L$. Note that $\Aut(\C)$ acts on the set $\C^r$ by its action on each entry.

We identify $S_k(\Gamma^{(n)}(N))$ with $\C^r$ by picking a basis of forms with rational Fourier coefficients (this is possible by~\eqref{sh:rationalbasis}). Under this identification, $S_k(\Gamma^{(n)}(N);L)$ corresponds to $L^r$. Let $V$ be the subset of $L^r$ corresponding to $S_k(\Gamma^{(n)}(N); L, \pi_S)$ under the above identification. By Proposition~\ref{p:blaharam} and the linearity of the adelization map, it follows that $V$ is a vector subspace of $L^r$ and the action of $\Aut(\C / L)$ leaves the space $V$  invariant. It follows from Lemma~\ref{lemma:linear}  that $V$ has a basis consisting of vectors in $L^r$; in other words, that $S_k(\Gamma^{(n)}(N); \pi_S) = S_k(\Gamma^{(n)}(N); \Q(\pi_S), \pi_S) \otimes \C.$ This proves the proposition.
\end{proof}

We now state our main result on arithmeticity.\footnote{In the literature, one can find several results similar to Theorem~\ref{t:aithmeticity}; however they all appear to be weaker or less general than our formulation in some respect or another.}
\begin{theorem}\label{t:aithmeticity}Let $N$, $k$, $n$ be positive integers such that $k \ge n$ and $nk$ is even. Let $S$ be a subset of the primes containing almost all primes and let $\mathfrak{S} = \mathfrak{S}_S $ be the set of isomorphism classes of irreducible admissible representations $\pi_S$ of $G_n(\A_S)$ such that $S_k(\Gamma^{(n)}(N); \pi_S) \neq \{0\}.$  Then there exists a direct sum decomposition into orthogonal subspaces $$S_k(\Gamma^{(n)}(N)) = \oplus_{\pi_S \in \mathfrak{S}} S_k(\Gamma^{(n)}(N); \pi_S),$$ and moreover, for each $\pi_S \in \mathfrak{S}$, there is an orthogonal basis $\{F_1^{(\pi_S)}, \ldots, F_{r_{\pi_S}}^{(\pi_S)} \}$ of $S_k(\Gamma^{(n)}(N); \pi_S)$ (where  $r_{\pi_S}$ denotes the dimension of $S_k(\Gamma^{(n)}(N); \pi_S)$) with the following properties:
\begin{enumerate}
\item $F_t^{(\pi_S)} \in S_k(\Gamma^{(n)}(N); \Q( \pi_S), \pi_S)$ for each $\pi_S \in \mathfrak{S}$, and $1 \le t \le r_{\pi_S}$.
 \item Let $\sigma \in \Aut(\C)$, $\pi_S \in \mathfrak{S}$. Then $r_{\leftexp{\sigma}\pi_S} = r_{\pi_S}$ and for each $1 \le t \le r_{\pi_S}$, we have $\leftexp{\sigma} F_t^{(\pi_S)} = F_t^{\leftexp{\sigma}\pi_S}.$

\item If we express $F \in S_k(\Gamma^{(n)}(N))$  in terms of the basis elements $F_t^{(\pi_S)}$: $$F = \sum_{\pi_S \in \mathfrak{S}} \sum_{t=1}^{r_{\pi_S}} a(F,  F_t^{(\pi_S)}) F_t^{(\pi_S)}, \quad a(F,  F_t^{(\pi_S)}) \in \C,$$ then we have $\sigma (a(F,  F_t^{(\pi_S)})) = a(\leftexp{\sigma}F,  F_t^{(\leftexp{\sigma}\pi_S)})$ for all $\sigma \in \Aut(\C)$. In particular, if $F \in S_k(\Gamma^{(n)}(N); L)$, then $a(F,  F_t^{(\pi_S)}) \in \Q(\pi_S)L.$

\item  For each $\pi_S \in \mathfrak{S}$, $1 \le t \le u \le r_{\pi_S}$, and $\sigma \in \Aut(\C)$, we have \begin{equation}\label{petequiv2}\sigma \left( \frac{\langle F_t^{(\pi_S)}, F_t^{(\pi_S)} \rangle}{\langle F_u^{(\pi_S)}, F_u^{(\pi_S)} \rangle}  \right) =  \frac{\langle F_t^{(\leftexp{\sigma}\pi_S)}, F_t^{(\leftexp{\sigma}\pi_S)} \rangle}{\langle F_u^{(\leftexp{\sigma}\pi_S)}, F_u^{(\leftexp{\sigma}\pi_S)} \rangle}.\end{equation}

\end{enumerate}

\end{theorem}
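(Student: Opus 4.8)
The plan is to prove the theorem in four stages: (a) the orthogonal direct sum decomposition; (b) the construction of a $\Q(\pi_S)$-rational \emph{orthogonal} basis of each summand, by a Gram--Schmidt argument whose coefficients are forced into $\Q(\pi_S)$ by Garrett's theorem; (c) the coherent choice of these bases over Galois orbits in $\mathfrak{S}$; and (d) the transformation laws (3) and (4). For (a): given $F\in S_k(\Gamma^{(n)}(N))$, the adelization $\Phi_F$ generates a finite direct sum of irreducible cuspidal representations inside $\bigoplus_\chi L^2_0(G_n(\Q)\bs G_n(\A),\chi)$. Grouping the irreducible constituents according to the isomorphism class of their restriction to $G_n(\A_S)$ --- which is constant on each isotypic component --- and applying the corresponding orthogonal projections, which commute with the $G_n(\A)$-action and hence preserve all the conditions defining $V_k^{(n)}$, one obtains $\Phi_F=\sum_{\pi_S}\Phi_{F,\pi_S}$ with $\Phi_{F,\pi_S}\in V_k^{(n)}$; de-adelizing (Theorem~\ref{t:biject}) gives $F=\sum_{\pi_S}F_{\pi_S}$ with $F_{\pi_S}\in S_k(\Gamma^{(n)}(N);\pi_S)$, where the (finitely many) $\pi_S$ occurring lie in $\mathfrak{S}$. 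If $\pi_S\not\simeq\pi_S'$ then $S_k(\Gamma^{(n)}(N);\pi_S)\perp S_k(\Gamma^{(n)}(N);\pi_S')$, since a common irreducible constituent of the associated representations would have isomorphic $S$-components, forcing $\pi_S\simeq\pi_S'$; as distinct irreducible subspaces of $L^2_0$ are orthogonal, Theorem~\ref{t:biject}(2) transfers this to the Siegel cusp forms. Finite-dimensionality of $S_k(\Gamma^{(n)}(N))$ makes $\mathfrak{S}$ finite, which gives the decomposition.

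Stage (b) is the crux, and the point where the hypothesis that $S$ contains almost all primes really enters. Fix $\pi_S\in\mathfrak{S}$, put $W_0=S_k(\Gamma^{(n)}(N);\Q(\pi_S),\pi_S)$ and $W=S_k(\Gamma^{(n)}(N);\pi_S)$; by Proposition~\ref{rationalbasis}, $W_0$ is a $\Q(\pi_S)$-structure on $W$, of dimension $r_{\pi_S}$. The key observation is that for nonzero $F\in W_0$ and arbitrary $G\in W_0$ one has $\langle G,F\rangle/\langle F,F\rangle\in\Q(\pi_S)$: Lemma~\ref{lpeterssonequiva} --- Garrett's theorem, in the CM-equivariant form proved there --- gives $\sigma(\langle G,F\rangle/\langle F,F\rangle)=\langle \leftexp{\sigma}{G},\leftexp{\sigma}{F}\rangle/\langle \leftexp{\sigma}{F},\leftexp{\sigma}{F}\rangle$ for every $\sigma\in\Aut(\C)$, and when $\sigma$ fixes $\Q(\pi_S)$ the forms $F,G$ (both with Fourier coefficients in $\Q(\pi_S)$) are fixed, so the ratio is $\sigma$-invariant and hence in $\Q(\pi_S)$. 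Now take any $\Q(\pi_S)$-basis $e_1,\dots,e_{r_{\pi_S}}$ of $W_0$ and form the Gram--Schmidt vectors $f_1=e_1$, $f_m=e_m-\sum_{j<m}\tfrac{\langle e_m,f_j\rangle}{\langle f_j,f_j\rangle}f_j$. Inductively $f_1,\dots,f_{m-1}\in W_0$, so each coefficient $\langle e_m,f_j\rangle/\langle f_j,f_j\rangle$ lies in $\Q(\pi_S)$ by the key observation, whence $f_m\in W_0$; the resulting orthogonal basis $\{f_t\}$ of $W$ lies in $W_0$. Reconciling orthogonality with $\Q(\pi_S)$-rationality is the main obstacle here, and it is exactly the rationality assertion of Lemma~\ref{lpeterssonequiva} that resolves it; the rest is the adelization dictionary of Section~\ref{s:adelization}.

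For (c): as in the proof of Proposition~\ref{p:blaharam} (the $\Aut(\C)$-action on holomorphic coherent cohomology is compatible with the adelization map, so $\leftexp{\sigma}{\pi_{F'}}$ and $\pi_{\leftexp{\sigma}{F'}}$ have isomorphic local components everywhere for every $F'$), the assignment $\pi_S\mapsto\leftexp{\sigma}{\pi_S}$ is an action of $\Aut(\C)$ on $\mathfrak{S}$ under which $F\mapsto\leftexp{\sigma}{F}$ restricts to a $\sigma$-semilinear bijection $S_k(\Gamma^{(n)}(N);\pi_S)\to S_k(\Gamma^{(n)}(N);\leftexp{\sigma}{\pi_S})$; in particular $r_{\leftexp{\sigma}{\pi_S}}=r_{\pi_S}$. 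Choose orbit representatives $R\subseteq\mathfrak{S}$, fix the basis $\{f_t^{(\pi_S)}\}$ from (b) for each $\pi_S\in R$, and set $F_t^{(\leftexp{\sigma}{\pi_S})}:=\leftexp{\sigma}{f_t^{(\pi_S)}}$. This is well defined: if $\leftexp{\sigma}{\pi_S}\simeq\leftexp{\sigma'}{\pi_S}$ then $\sigma'^{-1}\sigma$ lies in the automorphism group whose fixed field is $\Q(\pi_S)$ by definition, hence fixes $\Q(\pi_S)$ pointwise, so $\sigma$ and $\sigma'$ agree on the Fourier coefficients of $f_t^{(\pi_S)}$. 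Property (1) then holds, since $\leftexp{\sigma}{f_t^{(\pi_S)}}$ has coefficients in $\sigma(\Q(\pi_S))=\Q(\leftexp{\sigma}{\pi_S})$ by Proposition~\ref{p:blaharam}(2) and lies in $S_k(\Gamma^{(n)}(N);\leftexp{\sigma}{\pi_S})$; orthogonality persists because $\langle \leftexp{\sigma}{f_t},\leftexp{\sigma}{f_u}\rangle/\langle \leftexp{\sigma}{f_t},\leftexp{\sigma}{f_t}\rangle=\sigma(\langle f_t,f_u\rangle/\langle f_t,f_t\rangle)=0$ for $t\neq u$ by Lemma~\ref{lpeterssonequiva}; and $\leftexp{\tau}{F_t^{(\pi_S')}}=F_t^{(\leftexp{\tau}{\pi_S'})}$ follows by composition, giving (2).

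Finally, for (3), apply $\leftexp{\sigma}{(\cdot)}$ to $F=\sum_{\pi_S,t}a(F,F_t^{(\pi_S)})F_t^{(\pi_S)}$, use $\leftexp{\sigma}{F_t^{(\pi_S)}}=F_t^{(\leftexp{\sigma}{\pi_S})}$ together with the fact that $\pi_S\mapsto\leftexp{\sigma}{\pi_S}$ permutes $\mathfrak{S}$, and compare coefficients with the expansion of $\leftexp{\sigma}{F}$ to get $\sigma(a(F,F_t^{(\pi_S)}))=a(\leftexp{\sigma}{F},F_t^{(\leftexp{\sigma}{\pi_S})})$; the last assertion follows since an element of $\C$ fixed by $\Aut(\C/\Q(\pi_S)L)$ lies in $\Q(\pi_S)L$. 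Property (4) is trivial for $t=u$, and for $t<u$ I would set $H=F_t^{(\pi_S)}+F_u^{(\pi_S)}\in W_0$ and apply Lemma~\ref{lpeterssonequiva} with $H$ as the second argument and $F_u^{(\pi_S)}$ as the first: orthogonality collapses $\langle F_u^{(\pi_S)},H\rangle/\langle H,H\rangle$ to $1/(x+1)$, where $x=\langle F_t^{(\pi_S)},F_t^{(\pi_S)}\rangle/\langle F_u^{(\pi_S)},F_u^{(\pi_S)}\rangle>0$, while the identical computation for $\leftexp{\sigma}{\pi_S}$ yields $1/(x'+1)$ with $x'$ the analogous ratio; since $\sigma(1/(x+1))=1/(\sigma(x)+1)$, the identity of Lemma~\ref{lpeterssonequiva} forces $\sigma(x)=x'$, which is exactly (4).
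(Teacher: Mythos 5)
Your argument is correct and follows essentially the same route as the paper's: use Proposition~\ref{rationalbasis} to get a $\Q(\pi_S)$-rational basis, apply Gram--Schmidt and invoke Lemma~\ref{lpeterssonequiva} to keep the orthogonalized basis $\Q(\pi_S)$-rational, propagate the choice Galois-coherently by setting $F_t^{(\leftexp{\sigma}\pi_S)}:=\leftexp{\sigma}{F_t^{(\pi_S)}}$, and derive (3) and (4) from semilinearity and Lemma~\ref{lpeterssonequiva} respectively. You fill in several details that the paper leaves implicit (the well-definedness of the coherent choice, the explicit mechanism forcing the Gram--Schmidt coefficients into $\Q(\pi_S)$, and the careful substitution for property~(4), where the paper's stated choice of $F$ and $G$ in~\eqref{peterssonequiva} appears to have the roles swapped and would only give a trivial identity), but the proof idea is the same.
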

\begin{proof}The direct sum decomposition into orthogonal subspaces as stated is clear from the definitions, and the fact that vectors in non-isomorphic irreducible cuspidal representations are orthogonal with respect to the Petersson inner product.

Now, pick any $\pi_S \in \mathfrak{S}$ and let $\{F_1^{(\pi_S)}, \ldots, F_{r_{\pi_S}}^{(\pi_S)} \}$ be a basis of $S_k(\Gamma^{(n)}(N); \pi_S)$ consisting of forms lying in $S_k(\Gamma^{(n)}(N);$ $\Q(\pi_S),  \pi_S)$; this is possible by Proposition~\ref{rationalbasis}. Then $\{\leftexp{\sigma}F_1^{(\pi_S)}, \ldots, \leftexp{\sigma}F_{r_{\pi_S}}^{(\pi_S)} \}$ is a basis of $S_k(\Gamma^{(n)}(N); \leftexp{\sigma}\pi_S)$ consisting of forms lying in $S_k(\Gamma^{(n)}(N);$ $\Q(\leftexp{\sigma}\pi_S),  \leftexp{\sigma}\pi_S)$. We now apply the
Gram-Schmidt operation to these bases. Using Lemma~\ref{lpeterssonequiva}, this gives us the existence of an orthogonal basis of the desired sort for $S_k(\Gamma^{(n)}(N); \pi'_S)$ for all conjugates $\pi_S' = \leftexp{\sigma}\pi_S$ of $\pi_S$ in $\mathfrak{S}$.

We now repeat the above process, as many times as necessary, picking at each stage some $\pi_S \in \mathfrak{S}$ that has not been dealt with so far. This completes the proof of the second claim.

The third claim follows directly from Lemma~\ref{lpeterssonequiva}. The final claim follows also from Lemma~\ref{lpeterssonequiva} by putting $G= F_t^{\leftexp{\sigma}\pi_S} + F_u^{\leftexp{\sigma}\pi_S}$, $F=F_t^{\leftexp{\sigma}\pi_S}$ in~\eqref{peterssonequiva}.
\end{proof}

As an easy corollary of Theorem~\ref{t:aithmeticity}, we have the following strengthening of Lemma~\ref{lpeterssonequiva}.
\begin{corollary}\label{corpeteq}Let $k$, $n$ be positive integers such that $k \ge n$ and $nk$ is even. Let $F, G \in S_k^{(n)}$.
\begin{enumerate}
\item Suppose that $F$ is an eigenfunction of the local Hecke algebras  $\mathcal{H}_{p,1}^{\mathrm{class}, (n)}$ for almost all primes $p$. Assume that there is a constant $c$ such that all the Fourier coefficients of $cF$ lie in $\Q^{\CM}$.\footnote{Note that this implies that the Fourier coefficients of $cF$ generate a number field that is either totally real or a $\CM$ field.} Then for all $\sigma \in \Aut(\C)$, we have $$ \sigma \left( \frac{\langle G, F\rangle}{\langle F,F \rangle}  \right) =  \frac{\langle \leftexp{\sigma}G, \leftexp{\sigma}F\rangle}{\langle \leftexp{\sigma}F,\leftexp{\sigma}F \rangle}.
$$

\item Suppose that for almost all primes $p$, $F$ and $G$ are eigenfunctions of the local Hecke algebras  $\mathcal{H}_{p,1}^{\mathrm{class}, (n)}$ with the same eigenvalues. Assume that there is a constant $c$ such that all the Fourier coefficients of $cF$ and $cG$ are in $\Q^{\CM}$. Then for all $\sigma \in \Aut(\C)$, we have $$ \sigma \left( \frac{\langle G, G\rangle}{\langle F,F \rangle}  \right) =  \frac{\langle \leftexp{\sigma}G, \leftexp{\sigma}G\rangle}{\langle \leftexp{\sigma}F,\leftexp{\sigma}F \rangle}.$$ In particular,  $$\frac{\langle G, G\rangle}{\langle F,F \rangle} \in \Q^{\CM}.$$

\end{enumerate}

\end{corollary}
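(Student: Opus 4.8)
The plan is to deduce both statements directly from Theorem~\ref{t:aithmeticity}, which already supplies the needed arithmetic input: for a suitable modulus $N$, a set $S$ of primes containing almost all primes, and each $\pi_S\in\mathfrak{S}$, an \emph{orthogonal} basis $\{F_t^{(\pi_S)}\}$ of $S_k(\Gamma^{(n)}(N);\pi_S)$ whose members have Fourier coefficients in $\Q(\pi_S)\subseteq\Q^{\CM}$, the compatibility $\leftexp{\sigma}F_t^{(\pi_S)}=F_t^{(\leftexp{\sigma}\pi_S)}$ (item~(ii) of that theorem), the $\Aut(\C)$-compatibility of the expansion coefficients of a general form in this basis (item~(iii)), and the $\Aut(\C)$-compatibility of the ratios $\langle F_t^{(\pi_S)},F_t^{(\pi_S)}\rangle/\langle F_u^{(\pi_S)},F_u^{(\pi_S)}\rangle$ (item~(iv)). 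Throughout I use the elementary fact that complex conjugation commutes with every automorphism of $\C$ once restricted to $\Q^{\CM}$: since $\Q^{\CM}=\Q^{\tr}(i)$, $\Q^{\tr}$ is pointwise real and stable under $\Aut(\C)$, and each $\sigma\in\Aut(\C)$ fixes or negates $i$; hence $\sigma(\overline{\alpha})=\overline{\sigma(\alpha)}$ for $\alpha\in\Q^{\CM}$.

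For part~(i) I would first observe that the claimed identity is unchanged when $F$ is replaced by $cF$: one has $\leftexp{\sigma}{(cF)}=\sigma(c)\,\leftexp{\sigma}F$ on Fourier expansions, and both sides of the identity then acquire the common factor $\sigma(c)^{-1}$. So I may assume the Fourier coefficients of $F$ lie in a number field $L_0\subseteq\Q^{\CM}$. Pick $N$ with $F,G\in S_k(\Gamma^{(n)}(N))$; by Remark~\ref{remarkneareq}, $F\in S_k(\Gamma^{(n)}(N);\pi_S)$ for some $S$ containing almost all primes and some irreducible admissible $\pi_S$, and $\pi_S\in\mathfrak{S}$ since $F\ne0$, so Theorem~\ref{t:aithmeticity} applies with these $N,k,n,S$. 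Expand $F=\sum_t a_tF_t^{(\pi_S)}$ and $G=\sum_{\pi'_S,t}b_{\pi'_S,t}F_t^{(\pi'_S)}$; since the blocks for distinct $\pi'_S$ are mutually orthogonal and each basis is orthogonal, $\langle G,F\rangle=\sum_t b_{\pi_S,t}\,\overline{a_t}\,\nu_t$ and $\langle F,F\rangle=\sum_t|a_t|^2\nu_t$, with $\nu_t:=\langle F_t^{(\pi_S)},F_t^{(\pi_S)}\rangle>0$, and item~(iii) gives $a_t\in\Q(\pi_S)L_0\subseteq\Q^{\CM}$. Dividing numerator and denominator by $\nu_1$ and applying $\sigma$: item~(iii) identifies $\sigma(a_t)$ and $\sigma(b_{\pi_S,t})$ with the expansion coefficients of $\leftexp{\sigma}F$ and $\leftexp{\sigma}G$ against $F_t^{(\leftexp{\sigma}\pi_S)}$, while item~(ii) shows $\leftexp{\sigma}F\in S_k(\Gamma^{(n)}(N);\leftexp{\sigma}\pi_S)$; item~(iv) identifies $\sigma(\nu_t/\nu_1)$ with the analogous ratio for $\leftexp{\sigma}\pi_S$; and $a_t\in\Q^{\CM}$ gives $\sigma(\overline{a_t})=\overline{\sigma(a_t)}$ and $\sigma(|a_t|^2)=|\sigma(a_t)|^2$. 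Assembling these identifications shows $\sigma\big(\langle G,F\rangle/\langle F,F\rangle\big)$ equals the expression produced by the same computation from $\leftexp{\sigma}G$ and $\leftexp{\sigma}F$, namely $\langle\leftexp{\sigma}G,\leftexp{\sigma}F\rangle/\langle\leftexp{\sigma}F,\leftexp{\sigma}F\rangle$.

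For part~(ii), both $\langle G,G\rangle/\langle F,F\rangle$ and the asserted equivariance are invariant under rescaling $F\mapsto cF$ and $G\mapsto cG$ by a common constant, so I may assume $F$ and $G$ have Fourier coefficients in a common number field $L_0\subseteq\Q^{\CM}$. Since $F$ and $G$ share Hecke eigenvalues at almost all primes and the spherical local component at a good prime is determined by those eigenvalues (Proposition~\ref{pifinprop}), Remark~\ref{remarkneareq} places $F$ and $G$ in the \emph{same} space $S_k(\Gamma^{(n)}(N);\pi_S)$ for suitable $N,S,\pi_S$. Writing $F=\sum_t a_tF_t^{(\pi_S)}$ and $G=\sum_t g_tF_t^{(\pi_S)}$ with $a_t,g_t\in\Q^{\CM}$, orthogonality gives $\langle G,G\rangle/\langle F,F\rangle=\big(\sum_t|g_t|^2\nu_t/\nu_1\big)\big/\big(\sum_t|a_t|^2\nu_t/\nu_1\big)$, and the bookkeeping of part~(i) yields the $\Aut(\C)$-equivariance. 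Finally, restricting to $\sigma\in\Aut(\C/L_0)$ fixes $F$ and $G$, hence fixes the ratio; since $L_0$ is a number field and $\C^{\Aut(\C/L_0)}=L_0$, the ratio lies in $L_0\subseteq\Q^{\CM}$.

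The argument is essentially bookkeeping once Theorem~\ref{t:aithmeticity} is in hand. The one point requiring genuine care — and the reason the hypothesis is phrased with $\Q^{\CM}$ rather than an arbitrary number field — is the complex conjugation entering through $\langle F,F\rangle=\sum_t|a_t|^2\nu_t$: the step $\sigma(|a_t|^2)=|\sigma(a_t)|^2$ needs $a_t\in\Q^{\CM}$, and that is where I expect the main subtlety to lie; the rescaling reductions, the appeals to Remark~\ref{remarkneareq} and Proposition~\ref{pifinprop}, and the orthogonality computations are routine.
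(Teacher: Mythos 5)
Your proof is correct and follows the same route the paper takes: expand $F$ and $G$ in the orthogonal basis furnished by Theorem~\ref{t:aithmeticity} (located via Remark~\ref{remarkneareq}), apply $\sigma$ termwise using items (ii)--(iv) of that theorem, and use the identity $\sigma(\overline{x})=\overline{\sigma(x)}$ for $x\in\Q^{\CM}$ to handle the conjugations coming from the Hermitian inner product. The paper's proof is a one-liner invoking exactly these ingredients; you have simply made the bookkeeping explicit, including the harmless rescaling reduction to drop the constant $c$ and the appeal to Proposition~\ref{pifinprop} in part (ii) to see that $F$ and $G$ fall into the same block $S_k(\Gamma^{(n)}(N);\pi_S)$.
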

\begin{proof}Both parts follow immediately by writing $F$ and $G$ in terms of the basis (take $S$ to be the set of all primes where $F$ and $G$ are $p$-spherical and Hecke eigenfunctions with equal eigenvalues) obtained in Theorem~\ref{t:aithmeticity}, and then using Remark~\ref{remarkneareq} and the fact that $\overline{\sigma(x)} = \sigma{(\overline{x})}$ for all $x \in \Q^{\CM}$, $\sigma \in \Aut(\C)$.
\end{proof}

\begin{remark}Corollary~\ref{corpeteq} can be regarded as a gentle strengthening of the main result in~\cite{gar2}.
\end{remark}

\begin{remark}Corollary~\ref{corpeteq} is not true without the assumption about the Fourier coefficients lying in $\Q^{\CM}$. Counterexamples can be easily obtained by considering forms like $F_1 + b F_2$ where $F_1$ and $F_2$ are linearly independent forms satisfying the assumption and $b \notin \Q^{\CM}$.
\end{remark}

\section{Yoshida lifts, Weak Yoshida lifts and arithmetic properties}\label{s:yoshida}

\subsection{Yoshida lifts and Yoshida spaces}\label{s:yoshidarep}Let $v$ be a place of $\Q$. Let $\Phi(G_n(\Q_v))$ denote the set of isomorphism classes of admissible homomorphisms $\phi:W_v' \rightarrow \leftexp{L}G_n^0$ where $W_v'$ denotes the Weil-Deligne group of $\Q_v$ and  $\leftexp{L}G_n^0$ denotes the dual group of $G_n$. The elements of  $\Phi(G_n(\Q_v))$ are called $L$-parameters  for $G_n(\Q_v).$ The dual group $\leftexp{L}G_n^0$  of $G_n$ is in general an orthogonal group but if $n=1$ or $n=2$ then the dual group (by accidental isomorphism) happens to be a symplectic group. More precisely, $\leftexp{L}G_1^0 = \GL_2(\C)= G_1(\C)$ and $\leftexp{L}G_2^0 = \GSp_4(\C) = G_2(\C)$. Thus, if $n \le 2$, then it makes sense to talk of the multiplier (similitude character) $\mu_n$ of an element of $\leftexp{L}G_n^0$.

Let $\Pi(G_n(\Q_v))$ denote the set of isomorphism classes of irreducible admissible representations of $G_n(\Q_v)$. Assume henceforth that $n=1$ or $n=2$. Then the local Langlands conjecture is known; see~\cite{knapplocal, bush-hen, gantakGSp4}. Thus, we have a finite-to-one surjective map
\begin{equation}\label{locallanglands}L: \Pi(G_n(\Q_v)) \rightarrow \Phi(G_n(\Q_v)), \quad n=1 \text{ or }2,\end{equation} satisfying certain conditions. Given any irreducible admissible representation $\pi_v$ of $G_n(\Q_v)$, we call $L(\pi_v)$ the $L$-parameter of $\pi_v$. Two irreducible admissible representations have the same central character if and only if their $L$-parameters have the same multiplier (similitude character) $\mu_n$.

We have an embedding of dual groups \begin{align}\label{dualgroupmorphismeq2}
 \{(g_1,g_2)\in G_1(\C)\times G_1(\C)\:|\:\mu_1(g_1)=\mu_1(g_2)\}&\longrightarrow G_2(\C),\\
 (\mat{a}{b}{c}{d},\mat{a'}{b'}{c'}{d'})&\longrightarrow\left(\begin{matrix}a&&b\\&a'&&b'\\c&&d\\&c'&&d'\end{matrix}\right).\nonumber
\end{align}

By abuse of notation, we identify the pair $(g_1, g_2)$ with its image in $G_2(\C)$ under the above embedding. If $\phi_1$ and $\phi_2$ are $L$-parameters for $G_1(\Q_v)$ with the same similitude character, then we define their direct sum $\phi_1 \oplus \phi_2$ to be the $L$-parameter for $G_2(\Q_v)$ given by
$$(\phi_1 \oplus \phi_2)(w) = (\phi_1(w), \phi_2(w)).$$ The following key result is essentially due to Brooks Roberts~\cite{rob2001}; see also the discussion in~\cite{gantakGSp4} and~\cite[Sec. 3.2]{sahaschmidt}.

\begin{theorem}[Roberts~\cite{rob2001}]\label{t:roberts} Let $\pi_1 = \otimes'_v \pi_{1,v}$, $\pi_2=\otimes'_v \pi_{2,v}$ be irreducible cuspidal representations of $G_1(\A)$. Assume that $\pi_1$ and $\pi_2$ are non-isomorphic, tempered everywhere, have the same central character $\chi$, and that there exist integers $k_1 \ge k_2\ge 2$ such that $\pi_{1, \infty} \simeq \pi_{k_1}^{(1)}$ and $\pi_{2, \infty} \simeq \pi_{k_2}^{(1)}.$ Let $T$ be the set of finite places $p$ where $\pi_{1,p}$ and $\pi_{2,p}$ are both discrete series. Then,
\begin{enumerate}
\item There exists an irreducible cuspidal representation $\pi = \otimes_v' \pi_v$ of $G_2(\A)$ with central character $\chi$ such that \begin{equation}\label{lparsum}L(\pi_v) = L(\pi_{1,v}) \oplus L(\pi_{2,v}) \quad \text{ for all places $v$.}\end{equation} Any such $\pi$ occurs with multiplicity one in $L^2_0(G_2(\Q)\bs G_2(\A), \chi)$. Moreover, the number of distinct
    $\pi$ as above equals $2^{\#T}$.

\item There  exists an irreducible cuspidal representation $\pi = \otimes_v' \pi_v$ of $G_2(\A)$ satisfying~\eqref{lparsum} and such that $\pi_\infty \simeq \pi_k^{(2)}$ for some $k$ if and only if $T$ is non-empty and $k_2=2$. In that case, we must have $k = k_1/2 + 1$.\footnote{Note that $k_2=2$ automatically implies that $k_1$ is even because $f$ and $g$ have the same nebentypus.} The set of primes $p$ at which $\pi_p$ is non-generic is a subset $S_{\pi}$ of $T$ with an odd number of elements. Moreover, the number of distinct $\pi$ as above equals $2^{\#T -1}$, corresponding to the subsets of $T$ with odd cardinality.
\end{enumerate}
\end{theorem}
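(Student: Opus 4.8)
The plan is to construct all the representations $\pi$ in question as global theta lifts, following Roberts~\cite{rob2001}. To a quaternion algebra $D$ over $\Q$ with (finite, even-cardinality) ramification set $\Sigma$, attach the four-dimensional quadratic space $X_D$, namely $D$ with its reduced norm form; it has trivial discriminant, and one has $\mathrm{GSO}(X_D)(\A)\cong\bigl(D^\times(\A)\times D^\times(\A)\bigr)/\{(z,z^{-1}):z\in\A^\times\}$ with $\mathrm{GO}(X_D)(\A)=\mathrm{GSO}(X_D)(\A)\rtimes\langle\tau\rangle$, where $\tau$ interchanges the two factors. Thus a cuspidal automorphic representation of $\mathrm{GSO}(X_D)(\A)$ is the same as a pair $\sigma_1\boxtimes\sigma_2$ of cuspidal automorphic representations of $D^\times(\A)$ with $\omega_{\sigma_1}=\omega_{\sigma_2}$, and the global theta lift $\Theta(\cdot)$ to $\GSp_4(\A)$ for the dual pair $(\mathrm{GO}(X_D),\GSp_4)$, when nonzero, yields cuspidal representations whose local $L$-parameters are the direct sums of those of $\sigma_1$ and $\sigma_2$. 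This is the engine of the argument; the rest is bookkeeping.

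\emph{Construction and part (1).} The set of places where both $\pi_1$ and $\pi_2$ are discrete series is exactly $T\cup\{\infty\}$ (recall $\pi_{i,\infty}\simeq\pi_{k_i}^{(1)}$ with $k_i\ge 2$). For each even-cardinality $\Sigma\subseteq T\cup\{\infty\}$, let $D$ be the quaternion algebra ramified at $\Sigma$. Jacquet--Langlands transfers $\pi_1,\pi_2$ to cuspidal automorphic representations $\pi_1^D,\pi_2^D$ of $D^\times(\A)$ (possible precisely because both are discrete series at every place of $\Sigma$); since $\omega_{\pi_1}=\omega_{\pi_2}=\chi$, the pair $\pi_1^D\boxtimes\pi_2^D$ descends to $\mathrm{GSO}(X_D)(\A)$, and since $\pi_1\not\simeq\pi_2$ it extends to an irreducible cuspidal automorphic representation $\sigma_D$ of $\mathrm{GO}(X_D)(\A)$. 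The lift $\pi^{(D)}:=\Theta(\sigma_D)$ to $\GSp_4(\A)$ is nonzero and cuspidal (\cite{rob2001}: via the tower property — the first occurrence is not already at $\GSp_2=\GL_2$, as $\sigma_D$ is infinite-dimensional — together with nonvanishing of all local theta lifts, automatic here since the local inputs are discrete series or tempered principal series). Compatibility of the local theta correspondence $\mathrm{GO}((X_D)_v)\leftrightarrow\GSp_4(\Q_v)$ with local Langlands gives $L(\pi^{(D)}_v)=L(\pi_{1,v})\oplus L(\pi_{2,v})$ at every $v$ (\cite{rob2001}; see also~\cite{gantakGSp4}), so $\pi^{(D)}$ satisfies~\eqref{lparsum}. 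Since a set of cardinality $\#T+1$ has $2^{\#T}$ subsets of even cardinality, this produces $2^{\#T}$ such representations.

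It remains, for part (1), to show that \emph{every} cuspidal $\pi$ satisfying~\eqref{lparsum} equals some $\pi^{(D)}$ and that each occurs with multiplicity one in $L^2_0(G_2(\Q)\bs G_2(\A),\chi)$. Here the local--global theta machinery is essential: at each place $v$ the $L$-packet of $L(\pi_{1,v})\oplus L(\pi_{2,v})$ has two members exactly when $v\in T$ — a generic one, the local theta lift from the split group $\mathrm{GO}(2,2)(\Q_v)$, and a non-generic square-integrable one, the lift from the anisotropic inner form — and a single (generic) member otherwise. A cuspidal $\pi$ with this near-equivalence class must lie in the image of $\Theta$ from $\mathrm{GO}(X_D)$ for the unique $D$ matching its local components; comparison with the lifts constructed above identifies $\pi=\pi^{(D)}$, and multiplicity one follows from multiplicity one on $D^\times(\A)$, injectivity of Jacquet--Langlands, and injectivity properties of the theta lift. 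I would cite~\cite{rob2001} for this package rather than reprove it. \emph{This is the main obstacle}: constructing the lifts is soft, but classifying the whole near-equivalence class satisfying~\eqref{lparsum} and proving multiplicity one rests on the Rallis tower, the doubling method, and the fine structure of local theta lifts for $\mathrm{GO}(4)\times\GSp_4$ — the technical heart of~\cite{rob2001}.

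\emph{Part (2).} The representation $\pi_k^{(2)}$ is the unique scalar-valued lowest-weight member of the archimedean $L$-packet of $L(\pi_{1,\infty})\oplus L(\pi_{2,\infty})$, and it arises as a local theta lift only from the definite group $\mathrm{GO}(4,0)(\R)$, never from $\mathrm{GO}(2,2)(\R)$ (whose lift is generic); hence $\pi^{(D)}_\infty\simeq\pi_k^{(2)}$ forces $\infty\in\Sigma$, i.e. $D_\infty=\H$. Assuming this, Jacquet--Langlands sends $\pi_{i,\infty}\simeq\pi_{k_i}^{(1)}$ to the finite-dimensional representation $\Sym^{k_i-2}$ of $\H^\times$, and a highest-weight computation in the Fock model of the oscillator representation for $(\mathrm{O}(4,0)(\R),\Sp_4(\R))$ shows that the archimedean lift of $\Sym^{k_1-2}\boxtimes\Sym^{k_2-2}$ is a holomorphic discrete series which is scalar-valued — hence equal to $\pi_k^{(2)}$ — precisely when $\Sym^{k_2-2}$ is trivial, i.e. $k_2=2$, with weight $k=k_1/2+1$. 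Conversely, if $T=\emptyset$ the only admissible $\Sigma$ is empty (so $\infty\notin\Sigma$), and if $k_2>2$ the lift from $\mathrm{GO}(4,0)(\R)$ is vector-valued; so a holomorphic $\pi$ exists if and only if $T\neq\emptyset$ and $k_2=2$. Finally, for such $\pi=\pi^{(D)}$ write $\Sigma=S_\pi\sqcup\{\infty\}$ with $S_\pi\subseteq T$; evenness of $\#\Sigma$ makes $\#S_\pi$ odd, and at a finite prime $p$ the component $\pi^{(D)}_p$ is non-generic precisely when $D_p$ is a division algebra, i.e. $p\in S_\pi$ (the theta lift of a pair of discrete series from the anisotropic $\mathrm{GO}(X_D)(\Q_p)$ is the non-generic member of the local packet), and generic for all other $p$ — in particular for $p\notin T$, where the local packet is a singleton. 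Thus $S_\pi$ is exactly the non-generic set, and letting $S_\pi$ range over the odd-cardinality subsets of $T$ yields the $2^{\#T-1}$ representations claimed, completing the proof.
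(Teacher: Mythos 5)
The paper does not prove this theorem but cites it to Roberts~\cite{rob2001} (with pointers to~\cite{gantakGSp4} and~\cite{sahaschmidt}), and your sketch is a faithful outline of exactly that theta-lift argument: Jacquet--Langlands transfer to $D^\times\times D^\times$ for each quaternion algebra $D$ ramified on an even subset of $T\cup\{\infty\}$, global theta lift from $\mathrm{GO}(X_D)$ to $\GSp_4$, matching of local $L$-parameters, and the genericity/holomorphicity dichotomy governed by whether $D_v$ is split or division. The counting, the forcing of $\infty\in\Sigma$ and $k_2=2$ for a holomorphic lift with weight $k_1/2+1$, and the identification of $S_\pi$ with the ramification set at the finite places are all correct, and you appropriately flag that the classification of the full near-equivalence class and multiplicity one are the technical core that one cites from~\cite{rob2001} rather than reproves.
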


We now define the notion of a Yoshida lift. Let $ f  $, $g$ be classical holomorphic newforms of some weights, levels and characters. Then, it is well-known that the representations $\pi_f$, $\pi_g$ are irreducible (indeed,  the stated irreducibility is true whenever $f$ and $g$ are Hecke eigenforms almost everywhere; this follows from Proposition~\ref{propneareq} and the comments immediately after). Assume that the characters of $f$ and $g$ are associated with the same primitive character (equivalently, $\pi_f$, $\pi_g$ have the same central character) $\chi$.

\begin{definition}\label{defyosh}Given two classical holomorphic newforms $f$, $g$ whose characters are associated with the same primitive character $\chi$,  a non-zero element $F\in S_k^{(2)}$ is said to be a Yoshida lift of $(f, g)$ if $\pi_F$ is irreducible and    \begin{equation}\label{lparsum2v}L(\pi_{F,v}) = L(\pi_{f,v}) \oplus L(\pi_{g,v}) \quad \text{ for all places $v$.}\end{equation}
\end{definition}
Note that any Yoshida lift $F$ of $(f, g)$ must be an element of $S_k^{(2)}(\chi)$ and must also be an eigenfunction of $\mathcal{H}_{p,1}^{\mathrm{class}, (2)}$ for almost all $p$; the latter fact follows from Proposition~\ref{propneareq}.

\begin{remark}\label{yoshbasicrem}Let $f$, $g$ be newforms and $F\in S_k^{(2)}$ be a Yoshida lift of $(f,g)$. Then, for any irreducible cuspidal representation $\pi$ of $\GL_n(\A)$, we have an equality of local $L$-functions at each place $v$,
\begin{equation}L(s,  \pi_v\times\pi_{F,v} ) = L(s, \pi_{v} \times \pi_{f,v})L(s, \pi_{v} \times \pi_{g,v})
\end{equation} where the local factors are defined using the local Langlands correspondence (if $\pi_{F,v}$ is either generic or non-supercuspidal, these local factors can be also defined using the method of Shahidi).

\end{remark}

Combining Theorem~\ref{t:roberts} and Theorem~\ref{t:deadelize}, we immediately deduce:

\begin{theorem}\label{t:yoshida}Let $ f$, $g$ be classical holomorphic newforms of weights $k_1$, $k_2$ respectively and characters $\chi_1$, $\chi_2$ respectively. Suppose that \begin{enumerate} \item $k_1 \ge k_2 \ge 2,$ \item $\chi_1$ and $\chi_2$ are associated with the same primitive character $\chi$, \item $f$ is not a multiple of $g$. \end{enumerate} Then there exists a Yoshida lift of $(f, g)$ if and only if both the following conditions are met: a) $k_2 = 2$, b) $\pi_{f,p}$, $\pi_{g,p}$ are both discrete series at some prime $p$. Any such lift $F$ belongs to $S^{(2)}_{k_1/2 + 1}(\chi)$. Moreover, if $T$ is the set of finite places $p$ where $\pi_{1,p}$ and $\pi_{2,p}$ are both discrete series, then all the possible Yoshida lifts $F$ of $(f,g)$ give rise to exactly $2^{\#T -1}$ distinct cuspidal representations $\pi_F$ which are all non-isomorphic to each other.
\end{theorem}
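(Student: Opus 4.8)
The plan is to verify that the pair $(\pi_f,\pi_g)$ satisfies the running hypotheses of Theorem~\ref{t:roberts}, and then to read off all three assertions by feeding the output of Theorem~\ref{t:roberts}(2) into the de-adelization statement Theorem~\ref{t:deadelize}. First I would check the hypotheses of Theorem~\ref{t:roberts}: both $\pi_f$ and $\pi_g$ are irreducible and cuspidal (noted in the text preceding Definition~\ref{defyosh}); they are tempered at every place, since at the finite places this is Deligne's bound (the Ramanujan conjecture for holomorphic newforms of weight $\ge 2$) and at the archimedean place $\pi_{f,\infty}\simeq\pi_{k_1}^{(1)}$, $\pi_{g,\infty}\simeq\pi_{k_2}^{(1)}$ are limits of discrete series, hence tempered; they share the central character $\chi$ by hypothesis~(2); and they are non-isomorphic, because if $\pi_f\simeq\pi_g$ then $\pi_f$ and $\pi_g$ would share level, character and all Hecke eigenvalues, forcing $f$ to be a scalar multiple of $g$ against hypothesis~(3). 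Together with hypothesis~(1) ($k_1\ge k_2\ge 2$), this puts us in the situation of Theorem~\ref{t:roberts} with this $\chi$ and with $T$ the set of finite places where $\pi_{f,p}$, $\pi_{g,p}$ are both discrete series.

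Next I would establish the equivalence and the weight formula. For necessity: if $F\in S_k^{(2)}$ is a Yoshida lift of $(f,g)$ for some $k$, then $\pi_F$ is irreducible cuspidal, Proposition~\ref{piinfprop} gives $\pi_{F,\infty}\simeq\pi_k^{(2)}$, and $L(\pi_{F,v})=L(\pi_{f,v})\oplus L(\pi_{g,v})$ for all $v$ by Definition~\ref{defyosh}; thus $\pi:=\pi_F$ is exactly a representation of the type whose existence is characterized in Theorem~\ref{t:roberts}(2), so that part forces $T\ne\emptyset$ and $k_2=2$ — conditions~(b) and~(a) — and also forces $k=k_1/2+1$ (which is an integer since $k_1$ is then even, as $f,g$ have the same nebentypus), so $F\in S^{(2)}_{k_1/2+1}$; since $\pi_F$ has central character $\chi$, in fact $F\in S^{(2)}_{k_1/2+1}(\chi)$. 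For sufficiency: if $k_2=2$ and $T\ne\emptyset$, Theorem~\ref{t:roberts}(2) produces an irreducible cuspidal $\pi$ of $G_2(\A)$ with central character $\chi$, with $L(\pi_v)=L(\pi_{f,v})\oplus L(\pi_{g,v})$ for all $v$, and with $\pi_\infty\simeq\pi_k^{(2)}$ for $k=k_1/2+1$; as $k\ge 2$, Theorem~\ref{t:deadelize} (with $n=2$) yields $F\in S_k^{(2)}(\chi)$ with $\pi_F=\pi$, and this $F$ is then a Yoshida lift of $(f,g)$ by Definition~\ref{defyosh}.

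Finally, for the count I would observe that $F\mapsto\pi_F$ maps the set of Yoshida lifts of $(f,g)$ into the set $\mathcal{Y}$ of irreducible cuspidal $\pi$ with $L(\pi_v)=L(\pi_{f,v})\oplus L(\pi_{g,v})$ for all $v$ and $\pi_\infty\simeq\pi_{k_1/2+1}^{(2)}$ (well-definedness of the target uses Definition~\ref{defyosh} and Proposition~\ref{piinfprop}), and that this map is onto $\mathcal{Y}$ by the sufficiency argument above (every $\pi\in\mathcal{Y}$ equals $\pi_F$ for a suitable $F$, via Theorem~\ref{t:deadelize}). By Theorem~\ref{t:roberts}(2), $\#\mathcal{Y}=2^{\#T-1}$, and by the multiplicity-one assertion in Theorem~\ref{t:roberts}(1) the elements of $\mathcal{Y}$ are pairwise non-isomorphic; hence the Yoshida lifts of $(f,g)$ give rise to exactly $2^{\#T-1}$ distinct, pairwise non-isomorphic cuspidal representations $\pi_F$. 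The argument is essentially bookkeeping on top of the two cited theorems; the one step meriting care is matching the word ``distinct'' in Roberts's count (which counts subspaces of $L^2_0$) with ``non-isomorphic'' in the statement, and this is precisely where the multiplicity-one part of Theorem~\ref{t:roberts}(1) is needed.
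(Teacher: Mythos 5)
Your proposal is correct and follows essentially the same route as the paper, which introduces the theorem with ``Combining Theorem~\ref{t:roberts} and Theorem~\ref{t:deadelize}, we immediately deduce.'' You simply make the bookkeeping explicit: checking the hypotheses of Theorem~\ref{t:roberts}, deducing necessity and the weight formula from part~(2), using Theorem~\ref{t:deadelize} for sufficiency and for surjectivity of $F\mapsto\pi_F$ onto the set $\mathcal{Y}$, and reading off the count from part~(2). One small slip: for $k_1\ge k_2\ge 2$ the archimedean components $\pi_{k_1}^{(1)}$, $\pi_{k_2}^{(1)}$ are (holomorphic) discrete series, not limits of discrete series (the limit case is $k=n$, i.e.\ $k=1$ when $n=1$); this does not affect the argument since discrete series are tempered. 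Also, the pairwise non-isomorphism of the $2^{\#T-1}$ representations can be seen more directly from the fact that they correspond to different odd-cardinality subsets $S_\pi\subseteq T$, hence have non-isomorphic local components at some prime; your use of multiplicity one to get the same conclusion is also valid.
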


\begin{remark} The above Theorem gives an excellent demonstration of the power of Theorem~\ref{t:deadelize}. Roughly speaking, Theorem~\ref{t:deadelize} enables us to use existence results for automorphic representations to deduce existence of Siegel cusp forms with specified Hecke eigenvalue properties \emph{without needing to make any explicit constructions}. The disadvantage of this approach is that it does not provide us with delicate information about the Fourier coefficients or how to properly normalize the cusp form without doing significant additional work (these points can become important if one cares about integrality questions, congruence primes, etc.).
\end{remark}
\begin{definition}\label{d:yosh}Given two classical holomorphic newforms $f$ and $g$, we say that $f$ and $g$ satisfy the conditions for a scalar valued Yoshida lifting if \begin{enumerate}
\item $f$ is not a multiple of $g$.
\item The characters  of $f$ and $g$ are associated with the same primitive character.
\item One of the weights is equal to 2, and the other is an even integer greater than or equal to 2.
 \item  There exists a (finite) prime $p$ where $\pi_{f,p}$ and $\pi_{g,p}$ are both discrete series.

\end{enumerate}
\end{definition}

\begin{remark}\label{yoshbasicrem2} Let $ f \in S_{k_1}(N_1, \chi_1)$, $g \in S_{k_2}(N_2, \chi_2)$ be classical holomorphic newforms such that $\chi_1, \chi_2$ are associated to the same primitive character $\chi$. Let $p$ be a prime not dividing $N_1$ or $N_2$. Then, $\pi_{f, p}$ and $\pi_{g, p}$ are both unramified principal series representations whose Satake parameters can  be written down easily in terms of the Hecke eigenvalues $\lambda^{(1)}(f,p)$ and $\lambda^{(1)}(g,p)$. There is a unique representation $\Pi_p$ of $G_2(\Q_p)$ satisfying \begin{equation}\label{e:isobaricsumatp}L(\Pi_{p}) = L(\pi_{f,p}) \oplus L(\pi_{g,p});\end{equation} this representation is an unramified principal series representation with Satake parameters related to those of $\pi_{f,p}$ and $\pi_{g,p}$ in a simple manner. Let us write down what this relation means for the Hecke eigenvalues at $p$ of a Yoshida lift. Assume that $f$, $g$ satisfy the conditions for a scalar valued Yoshida lifting (with $k_2 = 2$) and let  $F \in S_{k_1/2 + 1}(\chi)$ be a Yoshida lift of $f, g$. Let $p$ be a prime such that $f$, $g$, $F$ are all $p$-spherical. Then, $\pi_{F, p} \simeq \Pi_p$ where $\Pi_p$ is as in~\eqref{e:isobaricsumatp}. The above discussion and the calculations in~\cite{andrianov} now imply that  $F$ is an eigenfunction for $T^{(2)}(p)$ and $T_i^{(2)}(p^2)$ ($i=1,2$) with eigenvalues $\lambda^{(2)}(F,p)$ and $\lambda_i^{(2)}(F, p^2)$ ($i=1,2$) given as follows:
$$\lambda^{(2)}(F,p) = p(\lambda^{(1)}(f,p) + \lambda^{(1)}(g,p)), \qquad  \lambda_2^{(2)}(F, p^2) = \chi(p),$$ $$ \lambda_1^{(2)}(F, p^2) = \chi(p)(p^2-1) + p\lambda^{(1)}(f,p)\lambda^{(1)}(g,p).$$

\end{remark}

\medskip

We now explain the relation between Yoshida lifts as defined in this paper, and scalar valued classical Yoshida lifts of squarefree level as defined, for instance, in~\cite{bocsch, bocsch1991, bocsch1994, bocsch1997, sahaschmidt}. Let us first recall the latter construction. For any positive integer $N$,
define
\begin{equation}\label{Gamma0defeq0}
\Gamma_0^{(2)}(N) := \left\{\begin{pmatrix}A&B\\ C&D \end{pmatrix} \in \Sp_4(\Z)\;|\;C \equiv 0 \pmod{N}\right\}.
\end{equation}

 Let $k_1 \ge 2$ be an even integer and $N_1$, $N_2$ be two positive, squarefree integers such that $M = \gcd(N_1, N_2)>1$. Let $f$ be a classical newform of weight $k_1$, level $N_1$, trivial nebentypus and $g$ be a classical newform of weight $2$, level $N_2$, trivial nebentypus such that $f$ and $g$ are not multiples of each other. Assume that for all primes $p$ dividing $M$ the Atkin-Lehner eigenvalues of $f$ and $g$ coincide. Put $N = \mathrm{lcm}(N_1, N_2)$. By a result from~\cite{sahaschmidt}, or equivalently, by the more classical constructions of~\cite{bocsch, bocsch1991, bocsch1994, bocsch1997}, there exists for each divisor $M_1$ of $M$ with an \emph{odd} number of prime factors a certain non-zero holomorphic Siegel cusp form $F_{f,g;M_1} \in S_{\frac{k_1}2+1}(\Gamma^{(2)}_0(N))$ that is a Hecke eigenform and whose spin $L$-function is the product of the $L$-functions of $f$ and $g$. We shall refer to the forms $F_{f,g;M_1}$ as the classical Yoshida lifts attached to the pair $(f,g)$. It can be shown that each $F_{f,g;M_1}$  is a Yoshida lift of $(f,g)$ in the sense of \emph{this} paper (the relevant conditions from Definition~\ref{defyosh} can be easily seen to hold using the arguments of~\cite[Section 3.3]{sahaschmidt}); in particular, their adelizations generate irreducible representations. If $\Pi= \Pi_{f,g;M_1}$ is the irreducible cuspidal representation generated by the adelization of $F_{f,g;M_1}$, then $\Pi_v$ is non-generic exactly at the primes dividing $M_1$ and at infinity. If $D$ be the definite quaternion algebra over $\Q$ ramified exactly at (infinity and) the primes dividing $M_1$, then $F_{f,g;M_1}$ can be constructed explicitly by relating $f$, $g$ to forms $f'$ and $g'$ on $D^\times$ via the Jacquet-Langlands-Shimizu correspondence, and then taking a  global theta lift from $D^\times \times  D^\times$ to $\GSp_4$. One can write down the Fourier coefficients of $F_{f,g;M_1}$ precisely in terms of certain representation numbers.

We now prove that a Yoshida lift $F$ as defined in this paper is equal to (a multiple of) one of the classical Yoshida lifts $F_{f,g;M_1}$ whenever $F$ is defined with respect to a Siegel congruence subgroup of squarefree level.

\begin{proposition}\label{equivclassical}Let $k_1 \ge 2$ be an even integer and $N_1$, $N_2$ be two positive, squarefree integers. Put $N = \mathrm{lcm}(N_1, N_2)$. Let $f$ be a classical newform of weight $k_1$, level $N_1$, trivial nebentypus and $g$ be a classical newform of weight $2$, level $N_2$, trivial nebentypus such that $f$ and $g$ are not multiples of each other. Then there exists a Yoshida lift $F$ of $(f,g)$ such that $F \in S_{\frac{k_1}2+1}(\Gamma^{(2)}_0(N))$ if and only if $M = \gcd(N_1, N_2)>1$ and the Atkin-Lehner eigenvalues of $f$ and $g$ coincide for all primes $p$ dividing $M$. Moreover, in that case, we have an equality of sets

$$\{F \in S_{\frac{k_1}2+1}(\Gamma^{(2)}_0(N)): F \text{ is a Yoshida lift of }(f,g)\} = \bigcup_{M_1}\{t  F_{f,g;M_1} : t\in \C \}$$ where the union is taken over all positive divisors $M_1$ of $M$  with an odd number of prime factors.
\end{proposition}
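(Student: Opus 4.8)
The containment $\supseteq$ and the ``if'' half of the equivalence are essentially already in hand: when $M>1$ and the Atkin--Lehner eigenvalues of $f$ and $g$ agree at every prime dividing $M$, the classical Yoshida lifts $F_{f,g;M_1}$ exist for each divisor $M_1\mid M$ with an odd number of prime factors, each $F_{f,g;M_1}$ lies in $S_{\frac{k_1}2+1}(\Gamma^{(2)}_0(N))$ and is a Yoshida lift of $(f,g)$ in the sense of Definition~\ref{defyosh}, as recalled above. So the plan is to prove the reverse containment together with the ``only if'' half. The basic tool is the adelization dictionary: by Theorem~\ref{t:biject} and strong approximation, a cusp form lies in $S_{\frac{k_1}2+1}(\Gamma^{(2)}_0(N))$ exactly when its adelization is right-invariant under $\prod_p K_{0,p}$, where $K_{0,p}\subset\GSp_4(\Z_p)$ is the local Siegel congruence subgroup of level $p^{v_p(N)}$; since $N_1,N_2$ are squarefree, $v_p(N)\in\{0,1\}$. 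Moreover, by Theorem~\ref{t:roberts}/\ref{t:yoshida} a Yoshida lift $F$ of $(f,g)$ has $\pi_{F,\infty}\simeq\pi_{\frac{k_1}2+1}^{(2)}$, has $\pi_{F,p}$ unramified for $p\nmid N_1N_2$, and at each $p\mid N$ has $\pi_{F,p}$ lying in the local $L$-packet attached to the parameter $L(\pi_{f,p})\oplus L(\pi_{g,p})$. Hence ``$F\in S_{\frac{k_1}2+1}(\Gamma^{(2)}_0(N))$'' reduces to the purely local requirement that for each $p\mid N$ the representation $\pi_{F,p}$ admit a nonzero $K_{0,p}$-fixed vector.

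The second step is the local analysis at the primes dividing $N$. Since $N_1$ is squarefree and $f$ has trivial nebentypus, at $p\mid N_1$ the conductor of $\pi_{f,p}$ is exactly $p$, and triviality of the central character rules out a ramified principal series of conductor $p$; thus $\pi_{f,p}\simeq\St\otimes\mu_p$ for an unramified quadratic character $\mu_p$ of $\Q_p^\times$, with $\mu_p(p)=\pm1$ the Atkin--Lehner eigenvalue of $f$ at $p$ (up to a fixed sign convention), and likewise for $g$ at $p\mid N_2$. In particular $\pi_{f,p}$ is a discrete series exactly for $p\mid N_1$, so the set $T$ of primes where $\pi_{f,p}$ and $\pi_{g,p}$ are both discrete series equals $\{p:p\mid M\}$. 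Now one invokes the classification of Iwahori--spherical representations of $\GSp_4(\Q_p)$ and their spaces of $\Gamma^{(2)}_0(p)$-fixed vectors (as worked out in \cite[\S3.3]{sahaschmidt} and the references therein; see also \cite{bocsch1991, rob2001}): (i) at $p\mid N$ with $p\nmid M$, the generic representation with parameter $L(\pi_{f,p})\oplus L(\pi_{g,p})$ has a one-dimensional space of $K_{0,p}$-fixed vectors; (ii) at $p\mid M$ with $\mu_p=\nu_p$ (matching Atkin--Lehner eigenvalues), the two members of the packet attached to $L(\St\otimes\mu_p)\oplus L(\St\otimes\mu_p)$ each have a one-dimensional space of $K_{0,p}$-fixed vectors; (iii) at $p\mid M$ with $\mu_p\neq\nu_p$, \emph{no} member of the packet attached to $L(\St\otimes\mu_p)\oplus L(\St\otimes\nu_p)$ has a nonzero $K_{0,p}$-fixed vector. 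Given (iii), if the Atkin--Lehner eigenvalues of $f$ and $g$ disagreed at some $p_0\mid M$ (hence $p_0\mid N$), no Yoshida lift of $(f,g)$ could lie in $S_{\frac{k_1}2+1}(\Gamma^{(2)}_0(N))$; together with Theorem~\ref{t:yoshida}, which gives a Yoshida lift only if $T\neq\emptyset$, i.e.\ $M>1$, this establishes the ``only if'' half of the equivalence.

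For the displayed set equality, assume $M>1$ and matching Atkin--Lehner eigenvalues on all $p\mid M$. By Theorem~\ref{t:yoshida} the possible $\pi_F$ are exactly $2^{\#T-1}$ pairwise non-isomorphic cuspidal representations, indexed by the subsets of $T$ of odd cardinality, equivalently by the divisors $M_1\mid M$ with an odd number of prime factors, where $M_1$ records the finite primes at which $\pi_F$ is non-generic. The classical lift $F_{f,g;M_1}$ realizes exactly the representation indexed by $M_1$, so these exhaust all the $\pi_F$. Fix such an $M_1$ and put $W_{M_1}=\{F\in S_{\frac{k_1}2+1}(\Gamma^{(2)}_0(N)):\Phi_F\in\Pi_{f,g;M_1}\}$. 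Under the adelization isomorphism $W_{M_1}$ corresponds to the space of vectors in $\bigotimes_v(\Pi_{f,g;M_1})_v$ that are lowest-weight at $\infty$, spherical at each $p\nmid N$, and $K_{0,p}$-fixed at each $p\mid N$; its dimension is therefore $\prod_{p\mid N}\dim(\Pi_{f,g;M_1})_p^{K_{0,p}}$, which equals $1$ by cases (i) and (ii). Hence $W_{M_1}=\C\,F_{f,g;M_1}$. Now any Yoshida lift $F$ of $(f,g)$ in $S_{\frac{k_1}2+1}(\Gamma^{(2)}_0(N))$ has $\pi_F=\Pi_{f,g;M_1}$ for a unique such $M_1$ (so in particular $\Phi_F\in\Pi_{f,g;M_1}$), whence $F\in W_{M_1}=\C\,F_{f,g;M_1}$; conversely every nonzero element of $\C\,F_{f,g;M_1}$ is a Yoshida lift of $(f,g)$. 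Taking the union over $M_1$ gives the asserted equality of sets.

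The main obstacle is case (iii) of the local analysis: the vanishing of the $\Gamma^{(2)}_0(p)$-fixed space for the ``Yoshida-type'' representation of $\GSp_4(\Q_p)$ attached to two unramified quadratic twists of Steinberg with \emph{distinct} twists — this is precisely the local mechanism by which a classical Yoshida lift of Siegel squarefree level is forced to have matching Atkin--Lehner eigenvalues. The one-dimensionality statements in (i) and (ii), which are what power the multiplicity bookkeeping in the set equality, are of the same flavour. All of these are \emph{local} facts about $\GSp_4(\Q_p)$ that must be read off from the explicit classification of Iwahori--spherical representations and their parahoric/congruence-subgroup invariants; once they are granted, the argument above is essentially bookkeeping.
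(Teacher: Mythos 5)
Your proposal is correct and follows essentially the same route as the paper: adelize, reduce the level condition to local $K_{0,p}^{(2)}$-invariance, invoke the Iwahori-spherical/$\Gamma_0(p)$ fixed-vector computations from \cite[\S3.3 and table (19)]{sahaschmidt} (your cases (i)--(iii)), use Theorem~\ref{t:roberts} for multiplicity one, and identify $\pi_F$ with $\Pi_{f,g;M_1}$ by matching the set of non-generic local components. The only difference is expository: you spell out the local representation theory at $p\mid N_1$ (Steinberg twist by unramified quadratic character, hence $T=\{p:p\mid M\}$) and the three fixed-vector cases explicitly, where the paper leaves these as citations.
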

\begin{proof}
By Theorem~\ref{t:yoshida}, for a Yoshida lift $F$ of $(f,g)$ to exist,   $\pi_{f,p}$ and $\pi_{g,p}$ must both be discrete series at some prime dividing $M$. So $M$ must be greater than 1 for a lift to exist. Now, let $F$ be a Yoshida lift of $(f,g)$ such that $F \in S_{\frac{k_1}2+1}(\Gamma^{(2)}_0(N))$ where $N = \mathrm{lcm}(N_1, N_2)$. The fact that $N$ is squarefree implies that for each $p$ dividing $N$, $\Phi_F$ maps under the isomorphism $\pi_F \simeq \pi_{F,p} \otimes (\otimes'_{v\neq p} \pi_{F,v})$ to a (non-zero) vector of the form $\phi_p \otimes \alpha$ where $\phi_p$ is a vector in $\pi_{F, p}$ that is fixed by \begin{equation}\label{Gamma0localdefeq}
K_{0,p}^{(2)} := \left\{\begin{pmatrix}A&B\\ C&D \end{pmatrix} \in G_2(\Z_p)\;|\;C \equiv 0 \pmod{p\Z_p}\right\}.
\end{equation}
However, if in addition, we know that $p$ divides $M$ and the Atkin-Lehner eigenvalues of $f$ and $g$ are different at $p$, then the calculations of~\cite[Sec. 3.3]{sahaschmidt} show that a representation $\pi$ of $G_2(\Q_p)$ satisfying $L(\pi_{p}) = L(\pi_{f,p}) \oplus L(\pi_{g,p})$  has no $K_{0,p}^{(2)}$-fixed vector. It follows that the Atkin-Lehner eigenvalues of $f$ and $g$ must coincide at all primes dividing $M$. Conversely, if $M>1$, and the Atkin-Lehner eigenvalues of $f$ and $g$ coincide at all primes dividing $M$, then there does exist a Yoshida lift $F$ of $(f,g)$ with $F \in S_{\frac{k_1}2+1}(\Gamma^{(2)}_0(N))$; indeed any $F_{f,g; M_1}$ as defined above is an example.

Finally, in order to show the equality of the two sets, let $F \in S_{\frac{k_1}2+1}(\Gamma^{(2)}_0(N))$ be a Yoshida lift of $(f,g)$. Let $M_1$ be the product of all the primes $p$ dividing $M$ where $\pi_{F,p}$ is non-generic. By Theorem~\ref{t:roberts}, $M_1$ has an odd number of prime factors. We claim that $F$ is a multiple of $F_{f,g;M_1}$. To show this, we first prove that $\pi_F = \Pi_{f,g;M_1}$. By the multiplicity one assertion of Theorem~\ref{t:roberts}, it suffices to prove that $\pi_F \simeq \Pi_{f,g;M_1}$. We proceed locally. Note that $L((\Pi_{f,g;M_1})_v)  = L(\pi_{F,v})$ for all places $v$. If $v$ is a place that is not among the primes dividing $M$, then there is only one representation in the local $L$-packet, and so it is clear that $\pi_{F,v} \simeq (\Pi_{f,g;M_1})_v.$ And if $p$ divides $M$, then there are two representations in the local $L$-packet $L(\pi_{f,p}) \oplus L(\pi_{g,p})$, one generic and the other non-generic, and by our construction, both $\pi_{F,v}$ and $(\Pi_{f,g,M_1})_v$ pick out the same member, namely the non-generic one if $p$ divides $M_1$ and the generic one if $p$ does not divide $M_1$.

Thus we have shown that $\pi_F = \Pi_{f,g;M_1}$. To complete the proof that $F$ is a multiple of $F_{f,g;M_1}$ it suffices to show that the space of $K_{0,p}^{(2)}$-fixed vectors in $\Pi_{f,g;M_1}$ is one-dimensional for each $p$ dividing $N$. This follows from table $(19)$ of~\cite{sahaschmidt}.
\end{proof}

Next, we define the concept of the Yoshida space attached to a pair of holomorphic newforms. Let $\chi_1$, $\chi_2$ be Dirichlet characters associated to the same primitive Dirichlet character $\chi$ and let $f,g$ be newforms that satisfy the conditions for a scalar valued Yoshida lifting (see Definition~\ref{d:yosh}).

\begin{definition}\label{defyoshspa}For any two classical holomorphic newforms $f$, $g$ satisfying the conditions for a scalar valued Yoshida lifting, the Yoshida space attached to $f, g$, denoted $Y[f,g]$, is the subspace generated by all the possible Yoshida lifts of $(f,g)$.
 \end{definition}

 Equivalently, an element $F$ belongs to $Y[f,g]$ if and only if all irreducible subrepresentations $\pi_F'$ of $\pi_F$ satisfy \begin{equation}\label{lparsum3}L(\pi'_{F,v}) = L(\pi_{f,v}) \oplus L(\pi_{g,v}) \quad \text{ for all places $v$.}\end{equation}
Note that if the weight of $f$ is $k$ and that of $g$ is $2$, and $\chi$ denotes the common primitive character attached to their characters, then  $Y[f,g]$ is a subspace of  $S_{k/2 + 1}^{(2)}(\chi).$

\subsection{Weak Yoshida lifts and weak Yoshida spaces}\label{s:weakyosh}
Let $f$, $g$ be newforms that satisfy the conditions for a scalar valued Yoshida lifting (see Definition~\ref{d:yosh}). Without loss of generality, we may assume that the weight of $f$ is $k$ and that of $g$ is $2$.

 \begin{definition}\label{defweakyosh}Given $f$, $g$ as above, a non-zero element $F\in S_{k/2 + 1}^{(2)}$ is said to be a \emph{weak Yoshida lift} of $(f, g)$ if all the irreducible subrepresentations of $\pi_F$ are isomorphic to a common representation $\pi_F'$  and  \begin{equation}\label{lparsum2}L(\pi'_{F,p}) = L(\pi_{f,p}) \oplus L(\pi_{g,p}) \quad \text{ for \emph{almost} all primes $p$.}\end{equation}
 \end{definition}
  Equivalently, a non-zero Siegel cusp form $F$ is a weak Yoshida lift of $(f,g)$ if and only if there exists an irreducible cuspidal representation $\pi$ of $G_2(\A)$ such that  \begin{enumerate}
\item $L(\pi_{p}) = L(\pi_{f,p}) \oplus L(\pi_{g,p})$  for almost all primes $p$.\footnote{This condition uniquely characterizes $\pi_p$ at all $p$ where $\pi_{f,p}$ and $\pi_{g,p}$ are both unramified principal series.}
\item $F \in S_{k/2 + 1}(\Gamma^{(2)}(N) ; \pi_\mathfrak{f})$ for some $N$.

\end{enumerate}

 Note that it is part of the definition of a weak Yoshida lift that it's weight must be $\frac{k}2 + 1$ (where $k$ is the larger of the weights of $f$ and $g$). This may appear restrictive but it is actually not. Indeed, using the global functional  equation for the $L$-function of $\pi'_F$, one can show that any $F$ satisfying~\eqref{lparsum2} must have weight $k/2 + 1$. We suppress further discussion of this point in the interest of brevity.

 Any weak Yoshida lift of $(f,g)$ is automatically an element of $S_{k/2+1}^{(2)}(\chi)$ (where $\chi$ is the common Dirichlet character associated to the characters of $f$ and $g$) and is an eigenfunction of $\mathcal{H}_{p,1}^{\mathrm{class}, (2)}$ for almost all $p$. Note also that any Yoshida lift is a weak Yoshida lift. In Section~\ref{s:multiplicity}, we will show conditionally (assuming some special cases of Langlands functoriality) that any weak Yoshida lift is a Yoshida lift.

 Finally, we define the concept of the \emph{weak Yoshida space} attached to a pair of classical newforms.

 \begin{lemma}\label{weakyoshdeflemma}Let $f$, $g$ be classical holomorphic newforms  of weights $k$ and 2 respectively, satisfying the conditions for a scalar valued Yoshida lifting (see Definition~\ref{d:yosh}). Let $\chi$ be the primitive Dirichlet character attached to the characters of $f$ and $g$, and let $F \in S_{k/2 + 1}^{(2)}$. The following are equivalent.
  \begin{enumerate}

  \item  $F = \sum_{i=1}^r F_i$ with each $F_i$ a weak Yoshida lift of $(f,g)$.

  \item   If $N$ is any integer such that  $F \in S_{k/2 + 1}(\Gamma^{(2)}(N) )$, then we have an expression  $F= \sum_{i=1}^r F_i$ with each $F_i$ a weak Yoshida lift of $(f,g)$ and each $F_i \in S_{k/2 + 1}(\Gamma^{(2)}(N) )$.

   \item  All irreducible subrepresentations $\pi_F'$ of $\pi_F$ satisfy $$L(\pi'_{F,p}) = L(\pi_{f,p}) \oplus L(\pi_{g,p}) \quad \text{ for almost all primes $p$.}$$

   \item  For each $p$ not dividing $N_1$, $N_2$, let $\Pi_p$ denote the (unique up to isomorphism) unramified principal series representation of $G_2(\Q_p)$ satisfying $$L(s, \Pi_p) = L(s, \pi_{f,p}) L(s, \pi_{g,p}).$$  Then there exists a set $S$ of primes containing almost all primes, and an integer $N$, such that   $F \in S_{k/2 + 1}(\Gamma^{(2)}(N) ; \Pi_S)$, where $\Pi_S = \otimes_{p \in S} \Pi_p$.

  \item  For almost all primes  $p$ where $F$ is $p$-spherical,  $F$ is an eigenfunction for the Hecke operators $T^{(2)}(p)$, $T_1^{(2)}(p^2)$ with the eigenvalues determined by those of $f$ and $g$ as follows, $$ \lambda^{(2)}(F,p) = p(\lambda^{(1)}(f,p) + \lambda^{(1)}(g,p)), \qquad  \lambda_1^{(2)}(F, p^2) = \chi(p)(p^2-1) + p\lambda^{(1)}(f,p)\lambda^{(1)}(g,p).$$

\end{enumerate}

\noindent Moreover, for each $f$, $g$ as above, the set of $F \in S_{k/2 + 1}^{(2)}$ satisfying any (and hence all) of the above conditions is a vector subspace of   $S_{k/2 + 1}^{(2)}(\chi)$.
\end{lemma}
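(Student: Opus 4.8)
The plan is to prove the cycle $(2)\Rightarrow(1)\Rightarrow(3)\Rightarrow(2)$ together with the separate cycle $(3)\Rightarrow(4)\Rightarrow(5)\Rightarrow(3)$, and then to read off the last sentence from characterization $(4)$. Throughout one works in weight $k/2+1$ and degree $2$; since $k$ is even we have $k/2+1\ge 2$ and $2(k/2+1)$ even, so all the results of Section~\ref{s:adelization} apply to forms in $S_{k/2+1}^{(2)}$. The implication $(2)\Rightarrow(1)$ is immediate, and $(1)\Rightarrow(3)$ is soft: writing $F=\sum_i F_i$ with each $F_i$ a weak Yoshida lift whose $\pi_{F_i}$ has common constituent $\sigma^{(i)}$, linearity of the adelization map gives $\Phi_F=\sum_i\Phi_{F_i}$, hence $\pi_F\subseteq\sum_i\pi_{F_i}$ inside the semisimple module $\bigoplus_\omega L^2_0(G_2(\Q)\bs G_2(\A),\omega)$; as $\sum_i\pi_{F_i}$ is a quotient of $\bigoplus_i\pi_{F_i}$, whose constituents are exactly the $\sigma^{(i)}$, every irreducible subrepresentation of $\pi_F$ is isomorphic to one of them and so satisfies~\eqref{lparsum2}.

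The equivalences $(3)\Leftrightarrow(4)\Leftrightarrow(5)$ I would handle with the Satake dictionary. For $(3)\Leftrightarrow(4)$: choose a cofinite set $S$ of primes $p\nmid N_1N_2$ at which $F$ is $p$-spherical and~\eqref{lparsum2} holds for every constituent of $\pi_F$; at such $p$ the representations $\pi_{f,p},\pi_{g,p}$ are unramified principal series, so $L(\pi_{f,p})\oplus L(\pi_{g,p})$ has the singleton $L$-packet $\{\Pi_p\}$, and $(3)$ becomes exactly $\pi'_{F,p}\simeq\Pi_p$ for all $p\in S$ and all irreducible $\pi'_F\subseteq\pi_F$, i.e.\ $F\in S_{k/2+1}(\Gamma^{(2)}(N);\Pi_S)$; the converse reads the same chain backwards. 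For $(4)\Rightarrow(5)$ one reads the Hecke eigenvalues at $p\in S$ off Proposition~\ref{pifinprop} and the explicit formulas in Remark~\ref{yoshbasicrem2}. For $(5)\Rightarrow(3)$: since $F$ is a $T^{(2)}(p)$-eigenform for a set of primes of density one, Proposition~\ref{funprop}(1) gives $F\in S_{k/2+1}(\Gamma^{(2),*}(N),\chi')$ for some primitive $\chi'$; because $T_2^{(2)}(p^2)$ is attached to the central matrix $pI_4$ it always acts by the central character, so by Lemma~\ref{heckeiso} $F$ is an eigenform for all of $\mathcal{H}_{p,1}^{\mathrm{class},(2)}$ at almost all $p$-spherical $p$, and comparing the formula for $\lambda_1^{(2)}(F,p^2)$ in $(5)$ (where $\chi(p)$ appears explicitly) with the general Satake expression forces $\chi'=\chi$ and then pins down $\pi'_{F,p}\simeq\Pi_p$ at almost all $p$-spherical $p$.

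The crux is $(3)\Rightarrow(2)$. Fix $N$ with $F\in S_{k/2+1}(\Gamma^{(2)}(N))$; by the above $F\in S_{k/2+1}^{(2)}(\chi)$, so $\Phi_F\in V^{(2)}_{k/2+1}(\chi)$ lies in a single $L^2_0(G_2(\Q)\bs G_2(\A),\omega_\chi)$. I would decompose $\pi_F=\bigoplus_{l=1}^m\pi_F^{[l]}$ into isotypic components (one for each distinct constituent $\sigma^{(l)}$ of $\pi_F$) and write $\Phi_F=\sum_l\Phi^{[l]}$ accordingly. The key point is that each orthogonal projection $\mathrm{pr}_l$ is a $G_2(\A)$-module map, so on the space of smooth $(K_\infty\prod_pG_2(\Z_p))$-finite vectors (which contains $\Phi_F$) it commutes with right translation by $K_\infty$, with the differential operators $\mathfrak{p}^{-}_\C$, and with right translation by $\prod_pK_p^N$; hence each $\Phi^{[l]}$ still satisfies all conditions defining $V^{(2)}_{k/2+1}(\chi)$. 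By Theorem~\ref{t:biject} each $\Phi^{[l]}$ is then the adelization of a Siegel cusp form $F_l$, and by the remark following that theorem $F_l\in S_{k/2+1}(\Gamma^{(2)}(N))$ since $\Gamma^{(2)}(N)=G_2(\Q)\cap G_2(\R)^+\prod_pK_p^N$; moreover $F=\sum_lF_l$ by linearity, $\pi_{F_l}=\pi_F^{[l]}$ is isotypic, and its constituent satisfies~\eqref{lparsum2} by $(3)$, so each nonzero $F_l$ is a weak Yoshida lift of level $N$, giving $(2)$. I expect the main obstacle to be precisely this verification that the representation-theoretic isotypic projection respects the archimedean conditions (the $K_\infty$-type and the holomorphy equation $\mathfrak{p}^{-}_\C\cdot\Phi=0$) and the level, so that the summands really de-adelize to Siegel cusp forms of the original level; by comparison the Satake bookkeeping in $(5)\Rightarrow(3)$ is routine.

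For the final clause, if $F,F'$ both satisfy $(4)$, with data $(S,N)$ and $(S',N')$, then $S\cap S'$ is still cofinite and both forms lie in $S_{k/2+1}(\Gamma^{(2)}(\mathrm{lcm}(N,N'));\Pi_{S\cap S'})$, which is a complex vector space, so every linear combination of $F$ and $F'$ again satisfies $(4)$; since a cofinite $S$ has density $>1/2$, this space sits inside $S_{k/2+1}(\Gamma^{(2),*}(N''),\chi_0)$ for some primitive $\chi_0$, and matching central characters at unramified primes identifies $\chi_0$ with the common primitive character $\chi$ of $f$ and $g$, so the set in question is a subspace of $S_{k/2+1}^{(2)}(\chi)$.
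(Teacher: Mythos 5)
Your proof is essentially correct and is considerably more explicit than the paper's, which disposes of the lemma in three sentences (``The equivalence of the first four statements follow directly from the relevant definitions and Theorem~\ref{t:deadelize}. The fifth statement follows from the same argument as in Remark~\ref{yoshbasicrem2}. The assertion about being a subspace is trivial.''). Two observations on the comparison. First, you do not use Theorem~\ref{t:deadelize}, which the paper cites; I believe this is correct, since the theorem produces a cusp form from a given representation, whereas in the lemma one already has the form $F$ in hand and only needs to decompose $\Phi_F$. Your handling of $(3)\Rightarrow(2)$ via isotypic projection inside $\bigoplus_\omega L^2_0$ is exactly the right mechanism: the projections are $G_2(\A)$-equivariant, hence preserve the $K_\infty$-type, the holomorphy equation $\mathfrak{p}^{-}_\C\cdot\Phi=0$, the central character, and the right-invariance under $\prod_p K_p^N$, so each summand lies in $V_{k/2+1}^{(2)}(\chi)$ and de-adelizes to a form of the original level by Theorem~\ref{t:biject}. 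This is precisely what the paper means by ``follows from the definitions,'' and you have spelled it out correctly. Your $(1)\Rightarrow(3)$ and the final subspace clause are also fine.

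The one place where your argument is genuinely shaky is in $(5)\Rightarrow(3)$, at the step ``comparing the formula for $\lambda_1^{(2)}(F,p^2)$ in $(5)$ (where $\chi(p)$ appears explicitly) with the general Satake expression forces $\chi'=\chi$.'' The three eigenvalues $\lambda^{(2)}(F,p)$, $\lambda_1^{(2)}(F,p^2)$, $\lambda_2^{(2)}(F,p^2)$ are independent coordinates on the space of spherical representations, so specifying the first two (as $(5)$ does) does not by itself constrain the third, i.e.\ does not constrain $\chi'(p)=\lambda_2^{(2)}(F,p^2)$; there is no equation between $\lambda_1^{(2)}$ and $\lambda_2^{(2)}$ to compare against. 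A clean way to close this gap is to use the Hermitian relation~\eqref{heckeherm} for $T^{(2)}(p)$: once Proposition~\ref{funprop}(1) places $F$ in $S_{k/2+1}(\Gamma^{(2),*}(N),\chi')$, that relation gives $\lambda^{(2)}(F,p)=\chi'(p)\,\overline{\lambda^{(2)}(F,p)}$; on the other hand the formula in $(5)$ together with the $\GL_2$ analogue $\overline{\lambda^{(1)}(f,p)}=\chi(p)^{-1}\lambda^{(1)}(f,p)$ (and likewise for $g$) shows $\overline{\lambda^{(2)}(F,p)}=\chi(p)^{-1}\lambda^{(2)}(F,p)$, so $\chi'(p)=\chi(p)$ whenever $\lambda^{(2)}(F,p)\neq 0$, which holds on a set of density one. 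After that, all three eigenvalues of $\pi'_{F,p}$ match those of $\Pi_p$ and the remainder of your argument for $(5)\Rightarrow(3)$ goes through.
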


\begin{proof}The equivalence of the first four statements follow directly from the relevant definitions and Theorem~\ref{t:deadelize}. The fifth statement follows from the same argument as in Remark~\ref{yoshbasicrem2}. The assertion about being a subspace is trivial.
\end{proof}

\begin{definition}\label{defweakyoshspa}For any two classical holomorphic newforms $f$, $g$ satisfying the conditions for a scalar valued Yoshida lifting, we define the weak Yoshida space attached to $f, g$, denoted $Y'[f,g]$, to consist of all forms $F$ satisfying any (and hence all) of the conditions of Lemma~\ref{weakyoshdeflemma}.
\end{definition}

The really nice thing about the space  $Y'[f,g]$ is that it can be defined --- by the last equivalent assertion in Lemma~\ref{weakyoshdeflemma} --- using completely classical language that requires no notion of adelization or cuspidal representations or $L$-parameters. Note that given any pair of newforms $f$, $g$ satisfying the conditions for a scalar valued Yoshida lifting, we have an inclusion of sets $$\emptyset \neq \text{Yoshida lifts} \subseteq \text{weak Yoshida lifts} \subseteq \text{weak Yoshida space}.$$

\begin{remark} Assuming that there exists a strong functorial lifting from $\GSp_4$ to $\GL_4$, we will prove in Section~\ref{s:multiplicity} that every weak Yoshida lift is in fact a Yoshida lift, and hence the weak Yoshida space $Y'[f,g]$ coincides with the Yoshida space $Y[f,g]$.
\end{remark}

\subsection{Arithmeticity for Yoshida liftings}
The point of this short subsection is to provide a theorem which gathers together some arithmeticity properties for (weak) Yoshida lifts and (weak) Yoshida spaces.

We begin with a lemma.

\begin{lemma}\label{lemmaarithmeticityyosh}Let $p$ be a prime and $\pi_1$, $\pi_2$ be two representations of $G_1(\Q_p)$ with the same central character. Suppose $\Pi$ is a representation of $\grave{}G_2(\Q_p)$ such that $L(\Pi) = L(\pi_1) \oplus L(\pi_2)$. Then \begin{enumerate}
\item For any $\sigma \in \Aut(\C)$, $L(\leftexp{\sigma}{\Pi}) = L(\leftexp{\sigma}{\pi_1}) \oplus L(\leftexp{\sigma}{\pi_2})$.

\item $\Q(\Pi) \subset \Q(\pi_1)\Q(\pi_2)$.
\end{enumerate}
\end{lemma}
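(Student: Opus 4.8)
The plan is to prove both assertions by reducing them to the analogous compatibility statements for the local Langlands correspondence for $\GL_2$ and the dual group embedding~\eqref{dualgroupmorphismeq2}, combined with the already-established behavior of the correspondence under $\Aut(\C)$.

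First I would recall the key input: for $\GL_2(\Q_p)$, the local Langlands correspondence is $\Aut(\C)$-equivariant in the sense that $L(\leftexp{\sigma}{\pi}) = \leftexp{\sigma}{(L(\pi))}$ for any $\sigma \in \Aut(\C)$, where $\leftexp{\sigma}{\phi}$ denotes the conjugate $L$-parameter (apply $\sigma$ to the coefficients of the Frobenius-semisimple representation of the Weil--Deligne group). This is classical and can be cited from~\cite{knapplocal} or~\cite{bush-hen}. The same equivariance property holds for $\GSp_4(\Q_p)$ by~\cite{gantakGSp4}; one should be a little careful here since the paper's convention is that the dual group carries a similitude and one is really tracking $L$-parameters valued in $\GSp_4(\C)$, but the $\Aut(\C)$-action commutes with everything in sight. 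For part~(i): since $\leftexp{\sigma}{}$ commutes with direct sum of parameters (it just conjugates matrix entries, and the block-diagonal embedding~\eqref{dualgroupmorphismeq2} has entries in $\Z$, so it is defined over $\Q$ and hence $\sigma$-equivariant), we get
$$L(\leftexp{\sigma}{\Pi}) = \leftexp{\sigma}{(L(\Pi))} = \leftexp{\sigma}{(L(\pi_1) \oplus L(\pi_2))} = \leftexp{\sigma}{(L(\pi_1))} \oplus \leftexp{\sigma}{(L(\pi_2))} = L(\leftexp{\sigma}{\pi_1}) \oplus L(\leftexp{\sigma}{\pi_2}),$$
which is exactly the claim.

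For part~(ii): by definition $\Q(\Pi)$ is the fixed field of $\{\sigma \in \Aut(\C) : \leftexp{\sigma}{\Pi} \simeq \Pi\}$, and similarly for $\pi_1$, $\pi_2$. So it suffices to show that if $\sigma$ fixes $\Q(\pi_1)\Q(\pi_2)$ pointwise --- equivalently $\leftexp{\sigma}{\pi_1} \simeq \pi_1$ and $\leftexp{\sigma}{\pi_2} \simeq \pi_2$ --- then $\leftexp{\sigma}{\Pi} \simeq \Pi$. Under such a $\sigma$, part~(i) gives $L(\leftexp{\sigma}{\Pi}) = L(\leftexp{\sigma}{\pi_1}) \oplus L(\leftexp{\sigma}{\pi_2}) = L(\pi_1) \oplus L(\pi_2) = L(\Pi)$, so $\leftexp{\sigma}{\Pi}$ and $\Pi$ lie in the same local $L$-packet. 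This is where the main subtlety lies: local $L$-packets for $\GSp_4$ can have more than one element (of size up to $2$ in the relevant endoscopic situation), so membership in the same packet is not immediately enough to conclude $\leftexp{\sigma}{\Pi} \simeq \Pi$. I would resolve this by invoking the characterization of members of a $\GSp_4$ $L$-packet by genericity (from~\cite{gantakGSp4}): within the packet $L(\pi_1)\oplus L(\pi_2)$ there is at most one generic and at most one non-generic member, and the property of being generic is preserved under the $\Aut(\C)$-twist $\Pi \mapsto \leftexp{\sigma}{\Pi}$ (a Whittaker functional for $\Pi$ transports, via the $\sigma$-linear isomorphism defining $\leftexp{\sigma}{\Pi}$, to one for $\leftexp{\sigma}{\Pi}$). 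Hence $\leftexp{\sigma}{\Pi}$ and $\Pi$ are both generic or both non-generic, forcing $\leftexp{\sigma}{\Pi} \simeq \Pi$. This gives $\{\sigma : \leftexp{\sigma}{\pi_1}\simeq\pi_1,\ \leftexp{\sigma}{\pi_2}\simeq\pi_2\} \subseteq \{\sigma : \leftexp{\sigma}{\Pi}\simeq\Pi\}$, and taking fixed fields reverses the inclusion to give $\Q(\Pi) \subseteq \Q(\pi_1)\Q(\pi_2)$, as desired.

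The step I expect to be the genuine obstacle is precisely this last point --- ensuring that "same $L$-packet" upgrades to "isomorphic" under the $\Aut(\C)$-action. The genericity argument handles it cleanly when the packet has size $\le 2$, which is the case for the endoscopic parameters $L(\pi_1) \oplus L(\pi_2)$ relevant here (these are, up to the parametrization in~\cite{gantakGSp4}, exactly the packets arising from the embedding~\eqref{dualgroupmorphismeq2}). If one wanted a self-contained argument one could alternatively appeal directly to the fact --- also in~\cite{gantakGSp4}, and used elsewhere in this paper --- that the correspondence $\Pi \leftrightarrow (\text{parameter}, \text{character of the component group})$ is $\Aut(\C)$-equivariant, with the component-group character being combinatorial data (a sign) that is manifestly $\sigma$-invariant. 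Either route suffices; I would present the genericity version as it is the shortest and matches the local-packet description already recalled in Section~\ref{s:yoshidarep}.
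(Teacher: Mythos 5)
Your treatment of part~(ii) is essentially identical to the paper's: you reduce to showing $\leftexp{\sigma}{\Pi}\simeq\Pi$, note that $\leftexp{\sigma}{\Pi}$ and $\Pi$ share the same $L$-parameter by part~(i), and then distinguish the at most two members of that $L$-packet by genericity, observing that a Whittaker functional for $\Pi$ transports under the $\sigma$-linear isomorphism to one for $\leftexp{\sigma}{\Pi}$. The paper uses exactly this argument, so nothing to add there.

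Part~(i), however, contains a real gap. Your ``key input'' --- that the local Langlands correspondence satisfies $L(\leftexp{\sigma}{\pi}) = \leftexp{\sigma}{(L(\pi))}$ for all $\sigma\in\Aut(\C)$ --- is false in general. The unitary normalization of the correspondence (the one used throughout the paper and in~\cite{gantakGSp4}) builds in half-integral Tate twists, so the $L$-parameter of a representation typically has $p^{1/2}$ in its Frobenius eigenvalues; an automorphism $\sigma$ with $\sigma(p^{1/2})=-p^{1/2}$ then takes $L(\pi)$ to the parameter of an unramified quadratic twist of $\leftexp{\sigma}{\pi}$, not of $\leftexp{\sigma}{\pi}$ itself. (The Steinberg representation already illustrates this: $\leftexp{\sigma}{\mathrm{St}}\simeq\mathrm{St}$, but $\leftexp{\sigma}{(L(\mathrm{St}))}$ is the parameter of a twist of $\mathrm{St}$.) What \emph{is} true is the $\Aut(\C)$-equivariance of the arithmetically normalized correspondence, i.e.\ after a shift by $|\cdot|_p^{1/2}$. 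The paper's proof is built precisely around this: it first records the ``easy observation'' that $L(\Pi)=L(\pi_1)\oplus L(\pi_2)$ is equivalent to $L(\Pi\otimes\chi)=L(\pi_1\otimes\chi)\oplus L(\pi_2\otimes\chi)$ for any character $\chi$ of $\Q_p^\times$ (inflated via the similitude), then twists everything by $|\cdot|_p^{1/2}$, applies $\sigma$ using~\cite[Lemma~5.3]{morimoto} (which is the correct, twist-corrected equivariance statement $\sigma(L(\pi\otimes|\cdot|_p^{1/2}))=L(\leftexp{\sigma}{\pi}\otimes\leftexp{\sigma}{(|\cdot|_p^{1/2})})$), and finally untwists by $\leftexp{\sigma}{(|\cdot|_p^{-1/2})}$ using the observation again. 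So your overall strategy (``$\Aut(\C)$ commutes with direct sum of parameters'') is the right one, and the conclusion of~(i) is true, but the specific equivariance you lean on is wrong as stated, and the twist-and-untwist maneuver is exactly what is needed to make the argument legitimate.
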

\begin{proof}We begin an easy observation.  Suppose $\chi$ is a character of $\Q_p^\times$. We can think of $\chi$ as a character of $G_n(\Q_p)$ by composing it with the similitude character $\mu_n$. Then $L(\Pi) = L(\pi_1) \oplus L(\pi_2)$ if and only if $L(\Pi \otimes \chi) = L(\pi_1 \otimes \chi ) \oplus L(\pi_2 \otimes \chi)$. The proof of this observation follows immediately from the behavior of $L$-parameters under twisting~\cite[Main Theorem, part (iv)]{gantakGSp4}.

Now, let $\Pi$, $\pi_1$, $\pi_2$ be as in the statement of the lemma and $\sigma \in \Aut(\C)$. Then, using the above observation, we have  $L(\Pi \otimes ||_p^{1/2}) = L(\pi_1 \otimes ||_p^{1/2}) \oplus L(\pi_2 \otimes ||_p^{1/2}).$ Applying $\sigma$, we deduce $\sigma(L(\Pi \otimes ||_p^{1/2})) = \sigma(L(\pi_1 \otimes ||_p^{1/2})) \oplus \sigma(L(\pi_2 \otimes ||_p^{1/2})).$ Using~\cite[Lemma 5.3]{morimoto}, we see that this implies that $L(\leftexp{\sigma}{\Pi} \otimes \leftexp{\sigma}{||_p^{1/2}}) = L(\leftexp{\sigma}{\pi_1} \otimes \leftexp{\sigma}{||_p^{1/2}}) \oplus L(\leftexp{\sigma}{\pi_2} \otimes \leftexp{\sigma}{||_p^{1/2}}).$ Using the observation again, we deduce that $L(\leftexp{\sigma}{\Pi}) = L(\leftexp{\sigma}{\pi_1}) \oplus L(\leftexp{\sigma}{\pi_2})$.

To prove the second assertion of the lemma, let $\sigma \in \Aut(\C)$ be such that $\leftexp{\sigma}{\pi_1} = \pi_1$ and $\leftexp{\sigma}{\pi_2} = \pi_2$. We need to show that $\leftexp{\sigma}{\Pi} = \Pi$. Using the first part proved above, it follows that $L(\leftexp{\sigma}{\Pi}) = L(\Pi).$ It is known~\cite{rob2001, gantakGSp4} that there are either one or two representations in any given $L$-packet and (when there are two of them) exactly one is generic. The key point now is that $\leftexp{\sigma}{\Pi}$ is generic if and only if $\Pi$ is; this follows immediately from the easy fact that if $l$ is a non-zero Whittaker functional on $\Pi$ then $\sigma \circ l$ is a   non-zero Whittaker functional on $\leftexp{\sigma}{\Pi}$. As a result, $L(\leftexp{\sigma}{\Pi}) = L(\Pi)$ implies that $\leftexp{\sigma}{\Pi} = \Pi$, as desired.
\end{proof}

\begin{theorem}\label{rationalyoshida}Let $ f $ and $g$  be classical holomorphic newforms which satisfy the conditions for a scalar valued Yoshida lifting. Let $\pi_f$ and $\pi_g$ be the irreducible cuspidal representations of $\GL_2(\A)$ attached to $f$ and $g$ respectively, and let $\Q(f,g)$ denote the field $\Q((\pi_f)_\mathfrak{f}) \Q((\pi_g)_\mathfrak{f}).$ Let $N$ be any positive integer.
  \begin{enumerate}
\item \textbf{(Yoshida lift)}  Suppose that $F \in S_k(\Gamma^{(2)}(N))$ is a Yoshida lift of $(f,g)$. Then, $ \Q((\pi_F)_\mathfrak{f}) \subset \Q(f,g)$ and for any $\sigma \in \Aut(\C)$, we have that $\leftexp{\sigma}{F}$ is a Yoshida lift of $(\leftexp{\sigma}{f},\leftexp{\sigma}{g})$. Furthermore, we can write $F$ as a finite sum $F =\sum_i c_i F_i$,  such that \begin{enumerate}

        \item $F_i$ is a Yoshida lift of $(f, g)$ and $\pi_{F_i} = \pi_F$.

    \item $F_i \in  S_k(\Gamma^{(2)}(N); \Q(f,g) )$; in particular all the Fourier coefficients of $F_i$ are contained in a $\CM$ field.

    \item $c_i \in L \Q(f,g)$ where $L$ is any subfield of $\C$ containing all the Fourier coefficients of $F$.

     \item $\langle F_i, F_j \rangle =0 $ for $i \neq j$.

    \end{enumerate}

\item  \textbf{(weak Yoshida lift) }Suppose that $F \in S_k(\Gamma^{(2)}(N))$ is a weak Yoshida lift of $(f,g)$. Then, for any $\sigma \in \Aut(\C)$, $\leftexp{\sigma}{F}$ is a weak Yoshida lift of $(\leftexp{\sigma}{f},\leftexp{\sigma}{g})$. Moreover, if $\pi'_F$  is any irreducible subrepresentation of $\pi_F$ (note that all irreducible subrepresentations of $\pi_F$ are isomorphic), then we can write $F$ as a finite sum $F =\sum_i c_i F_i$,  such that \begin{enumerate}
     \item $F_i$ is a weak Yoshida lift of $(f, g)$ and $\pi'_{F_i} \simeq \pi'_F$ for each irreducible subrepresentation $\pi'_{F_i}$ of $\pi'_F$.
    \item $F_i \in  S_k(\Gamma^{(2)}(N); \Q((\pi'_F)_\mathfrak{f}) )$; in particular all the Fourier coefficients of $F_i$ are contained in some $\CM$ field.

    \item $c_i \in L\Q((\pi'_F)_\mathfrak{f})$ where $L$ is any subfield of $\C$ containing all the Fourier coefficients of $F$.

     \item $\langle F_i, F_j \rangle =0 $ for $i \neq j$.
\end{enumerate}

\item \textbf{(weak Yoshida space)} Suppose that $F \in S_k(\Gamma^{(2)}(N))$ belongs to the weak Yoshida space $Y'[f,g]$. Then, for any $\sigma \in \Aut(\C)$, $\leftexp{\sigma}{F}$ belongs to the weak Yoshida space $Y'[\leftexp{\sigma}{f},\leftexp{\sigma}{g}]$. Moreover, we can write $F$ as a finite sum $F =\sum_i c_i F_i$,  such that

    \begin{enumerate}
     \item $F_i \in Y'[f,g]$.
    \item $F_i \in  S_k(\Gamma^{(2)}(N); \Q(f, g))$; in particular all the Fourier coefficients of $F_i$ are contained in a $\CM$ field.

    \item $c_i \in L\Q(f,g)$ where $L$ is any subfield of $\C$ containing all the Fourier coefficients of $F$.

     \item $\langle F_i, F_j \rangle =0 $ for $i \neq j$.

    \end{enumerate}

   \item \textbf{(ratio of Petersson norms)} Suppose that $F$ and $G$ are non-zero elements belonging to the weak Yoshida space $Y'[f,g]$ and that all the Fourier coefficients of $F$ and $G$ lie in $\Q^{\CM}$. Then $\frac{\langle G, G\rangle}{\langle F,F \rangle}$ lies in $\Q^{\CM}$, and moreover, for all $\sigma \in \Aut(\C)$, we have $$ \sigma \left( \frac{\langle G, G\rangle}{\langle F,F \rangle}  \right) =  \frac{\langle \leftexp{\sigma}G, \leftexp{\sigma}G\rangle}{\langle \leftexp{\sigma}F,\leftexp{\sigma}F \rangle}.$$

\end{enumerate}
\end{theorem}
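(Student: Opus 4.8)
The plan is to derive parts~(1)--(3) by specializing the general arithmeticity package of Theorem~\ref{t:aithmeticity} (used together with Proposition~\ref{p:blaharam} and Lemma~\ref{lemmaarithmeticityyosh}) to a suitable pair $(S,\pi_S)$, and to obtain part~(4) directly from Corollary~\ref{corpeteq}. Write $k_1$ for the weight of $f$ (which is even), so that $F,G\in S^{(2)}_{k_1/2+1}(\chi)$; set $n=2$ and $k=k_1/2+1$, and note $k\ge 2=n$ and $nk=k_1+2$ is even, so the hypotheses of Theorem~\ref{t:aithmeticity} and Corollary~\ref{corpeteq} are met. As a preliminary I would record that $\pi_{\leftexp{\sigma}{f},v}\simeq\leftexp{\sigma}{(\pi_{f,v})}$ and $\pi_{\leftexp{\sigma}{g},v}\simeq\leftexp{\sigma}{(\pi_{g,v})}$ at every place $v$ (standard for $\GL_2$, or Proposition~\ref{p:blaharam}(3) with $n=1$); combined with Lemma~\ref{lemmaarithmeticityyosh}(1) this shows that $F\mapsto\leftexp{\sigma}{F}$ transports the Yoshida data of $(f,g)$ to that of $(\leftexp{\sigma}{f},\leftexp{\sigma}{g})$ --- in particular $\leftexp{\sigma}{\Pi_p}$ is the local constituent attached to $(\leftexp{\sigma}{f},\leftexp{\sigma}{g})$ at $p$.

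For part~(1): since $F$ is a Yoshida lift, $\pi_F$ is irreducible and $L(\pi_{F,v})=L(\pi_{f,v})\oplus L(\pi_{g,v})$ at every $v$, so Lemma~\ref{lemmaarithmeticityyosh}(2) gives $\Q((\pi_F)_\mathfrak{f})=\coprod_p\Q(\pi_{F,p})\subseteq\coprod_p\Q(\pi_{f,p})\Q(\pi_{g,p})=\Q(f,g)$, which lies in $\Q^{\CM}$ by Proposition~\ref{p:blaharam}(1). That $\leftexp{\sigma}{F}$ is a Yoshida lift of $(\leftexp{\sigma}{f},\leftexp{\sigma}{g})$ follows from the irreducibility of $\pi_{\leftexp{\sigma}{F}}\simeq\leftexp{\sigma}{\pi_F}$ (Proposition~\ref{p:blaharam}(3)) and the preliminary remark. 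For the decomposition I would apply Theorem~\ref{t:aithmeticity} with $S=\mathfrak{f}$ and focus on the summand $S_k(\Gamma^{(2)}(N);(\pi_F)_\mathfrak{f})$; the key point is that every nonzero $F'$ in this space is a Yoshida lift with $\pi_{F'}=\pi_F$, because each irreducible subrepresentation of $\pi_{F'}$ has the same local components as $\pi_F$ at all finite places and at $\infty$ (both $\pi_k^{(2)}$), hence is abstractly $\simeq\pi_F$, and coincides with $\pi_F$ as a subspace by the multiplicity-one assertion of Theorem~\ref{t:roberts}(1). Theorem~\ref{t:aithmeticity} now yields an orthogonal basis $F_i:=F_i^{((\pi_F)_\mathfrak{f})}$ of this space lying in $S_k(\Gamma^{(2)}(N);\Q((\pi_F)_\mathfrak{f}))\subseteq S_k(\Gamma^{(2)}(N);\Q(f,g))$, and writing $F=\sum_i c_iF_i$ one has $c_i=a(F,F_i)\in L\,\Q((\pi_F)_\mathfrak{f})\subseteq L\,\Q(f,g)$; this gives (a)--(d).

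Part~(2) runs the same way with $\pi':=\pi'_F$ in place of $\pi_F$, applying Theorem~\ref{t:aithmeticity} to the summand $S_k(\Gamma^{(2)}(N);(\pi')_\mathfrak{f})$: every form there has all irreducible subrepresentations $\simeq\pi'$, hence is a weak Yoshida lift, but one drops the appeal to multiplicity one (so only $\pi'_{F_i}\simeq\pi'$ is obtained) and controls $\Q((\pi')_\mathfrak{f})$ only through Proposition~\ref{p:blaharam}(1), not through $\Q(f,g)$, since the $L$-parameters of $\pi'_p$ at the ramified primes are not pinned down. For part~(3), near-equivalence need not imply equivalence for $\GSp_4$, so $Y'[f,g]\cap S_k(\Gamma^{(2)}(N))$ may fail to be a single $S_k(\Gamma^{(2)}(N);\pi_\mathfrak{f})$; instead I would use Lemma~\ref{weakyoshdeflemma}(4) to fix a cofinite set $S$ of primes (away from $N$ and the levels of $f,g$), large enough that $F\in S_k(\Gamma^{(2)}(N);\Pi_S)\subseteq Y'[f,g]$, observe $\Q(\Pi_S)=\coprod_{p\in S}\Q(\Pi_p)\subseteq\Q(f,g)$ by Lemma~\ref{lemmaarithmeticityyosh}(2), and apply Theorem~\ref{t:aithmeticity} to the summand $\Pi_S$; that each basis form lies in $Y'[f,g]$ is immediate from Lemma~\ref{weakyoshdeflemma}, and $\leftexp{\sigma}{F}\in Y'[\leftexp{\sigma}{f},\leftexp{\sigma}{g}]$ because $\leftexp{\sigma}{(\cdot)}$ maps $S_k(\Gamma^{(2)}(N);\Pi_S)$ onto $S_k(\Gamma^{(2)}(N);\leftexp{\sigma}{\Pi_S})$ by Theorem~\ref{t:aithmeticity}(2) and $\leftexp{\sigma}{\Pi_S}$ is the datum of $(\leftexp{\sigma}{f},\leftexp{\sigma}{g})$.

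Finally, part~(4) is immediate from Corollary~\ref{corpeteq}(2): choosing $N$ with $F,G\in S_k(\Gamma^{(2)}(N))$ and a cofinite $S$ as in part~(3) with $F,G\in S_k(\Gamma^{(2)}(N);\Pi_S)$, both forms are eigenfunctions of $\mathcal{H}_{p,1}^{\mathrm{class}, (2)}$ for all $p\in S$ with the \emph{same} eigenvalues --- those of $\Pi_p$, equivalently the ones computed from $f$ and $g$ in Lemma~\ref{weakyoshdeflemma}(5) --- and their Fourier coefficients lie in $\Q^{\CM}$ by hypothesis, so the corollary gives both $\langle G,G\rangle/\langle F,F\rangle\in\Q^{\CM}$ and the asserted $\Aut(\C)$-equivariance. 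I expect the argument to be essentially bookkeeping: the one genuinely non-formal input is the multiplicity-one statement of Theorem~\ref{t:roberts}(1), which upgrades $\pi_{F_i}\simeq\pi_F$ to $\pi_{F_i}=\pi_F$ in part~(1) and has no unconditional counterpart for general $\GSp_4$ (hence the weaker ``$\simeq$'' in part~(2)); the other point demanding care is keeping track of which coefficient field one lands in --- $\Q(f,g)$ for a Yoshida lift and for the weak Yoshida space, but only $\Q((\pi'_F)_\mathfrak{f})$ for a weak Yoshida lift, since Lemma~\ref{lemmaarithmeticityyosh} may only be invoked at the unramified primes in that case.
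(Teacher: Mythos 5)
Your argument is correct and follows essentially the same route as the paper: the paper's proof is a brief sketch that derives parts (1) and (2) from Lemma~\ref{lemmaarithmeticityyosh} and Theorem~\ref{t:aithmeticity} with $S = \mathfrak{f}$ (invoking multiplicity one from Theorem~\ref{t:roberts} to upgrade $\pi_{F_i}\simeq\pi_F$ to $\pi_{F_i}=\pi_F$), part (3) from the same two results with $S$ a cofinite set of good primes, and part (4) from Corollary~\ref{corpeteq}. You have simply spelled out the bookkeeping that the paper compresses, including the correct observation that in part (2) the coefficient field can only be pinned down to $\Q((\pi'_F)_\mathfrak{f})$ rather than $\Q(f,g)$ because the ramified local parameters of a weak Yoshida lift are not controlled by $f$ and $g$.
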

\begin{proof}
The assertions about the Yoshida lift and the weak Yoshida lift follow from Lemma~\ref{lemmaarithmeticityyosh} and Theorem~\ref{t:aithmeticity} by taking $S = \mathfrak{f}$. Note here that in the case of Yoshida lifts, the assertion $F_i \in S_k(\Gamma^{(2)}(N); (\pi_F)_{\mathfrak{f}} )$ implies that $\pi_{F_i} = \pi_F$ because of multiplicity one.

The assertion about the weak Yoshida space follows also  from Lemma~\ref{lemmaarithmeticityyosh} and Theorem~\ref{t:aithmeticity} by taking $S$ to be a set of primes (containing almost all primes) such that for all primes $p$ in $S$ each of $F$, $f$, $g$ are $p$-spherical and eigenfunctions of the local Hecke algebra at $p$. Note here that for any such prime $p$ we have $\Q((\pi_F')_p) \subset \Q(\pi_{f,p})\Q(\pi_{g,p})$; this is immediate by looking at the $L$-function.

Finally, the assertion about Petersson norms is an immediate consequence of Corollary~\ref{corpeteq}.
\end{proof}

\subsection{Do there exist weak Yoshida lifts that are not Yoshida lifts?}\label{s:multiplicity}

It is natural to wonder whether there exists a weak Yoshida lift that is not a Yoshida lift. In this subsection, we show that the answer is \emph{no} provided we assume certain well-known conjectures that are expected to be true (but not completely proved at the time of this writing). The results of this subsection will not be used elsewhere in this paper.

\begin{definition}Given a positive integer $k$ and a congruence subgroup $\Gamma$ of $\Sp_4(\Q)$, we say that there exists a nice $L$-function theory for $S_k(\Gamma)$  if for all $F \in S_k(\Gamma)$ and each irreducible subrepresentation $\pi$ of $\pi_F$, the global $L$-functions $L(s, \pi)$, $L(s, \hat{\pi})$   have meromorphic continuation to the entire complex plane and satisfy the functional equation $L(s, \pi) = \varepsilon(s, \pi) L(1-s, \hat{\pi})$. Here $L(s, \pi)= \prod_v L(s, \pi_v)$, $L(s, \hat{\pi})= \prod_v L(s, \hat{\pi_v})$, $\varepsilon(s, \pi) = \prod_v\varepsilon(s, \pi_v)$ are  Euler products of local factors defined via the Local Langlands correspondence.

\end{definition}

The existence of a nice $L$-function theory for $S_k(\Sp_4(\Z))$ was proved by Andrianov~\cite{andrianov}. In general, the existence of a nice $L$-function theory for all $S_k(\Gamma)$ (and indeed, for all irreducible cuspidal representations of $G_2(\A)$) follows \emph{in principle} from the work of Piatetski-Shapiro~\cite{psg}. Unfortunately, the explicit verification that the local $L$ and $\varepsilon$-factors defined by Piatetski-Shapiro coincides with those defined by the local Langlands correspondence does not appear to have been carried out except for certain special types of congruence subgroups. For the application below, we only need to assume the existence of a  nice $L$-function theory for certain $\Gamma$ with squarefree level.

 For any integer $M$, we let $B(M)$ denote the Borel-type congruence subgroup of $\Sp_4(\Z)$ of level $M$, namely, $$B(M) = Sp(4,\Z) \cap \begin{pmatrix}\Z& M\Z&\Z&\Z\\\Z& \Z&\Z&\Z\\M\Z& M\Z&\Z&\Z\\M\Z&M \Z&M\Z&\Z\\\end{pmatrix}.$$

\begin{proposition}\label{p:strongmult}Let $\Gamma$ be a congruence subgroup of $\Sp_4(\Z)$ containing $B(N)$  for some squarefree integer $N$. Assume that there exists a nice $L$-function theory for $S_k(\Gamma)$. Let $F$, $G$ be elements of $S_k(\Gamma)$. Let $\pi_1$ be an irreducible subrepresentation of $\pi_F$ and $\pi_2$ be an irreducible subrepresentation of $\pi_G$ such that $\pi_{1,p}$, $\pi_{2,p}$ are tempered at all primes $p$ and $\pi_{1, p} \simeq \pi_{2,p}$ for almost all primes $p$. Then $L(\pi_{1,p}) = L(\pi_{2,p})$ for all primes $p$.
\end{proposition}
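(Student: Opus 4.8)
The plan is to prove this as a form of strong multiplicity one for $\GSp_4$, obtained by playing the functional equation of the completed degree-$4$ spin $L$-function (available by the ``nice $L$-function theory'' hypothesis) against the temperedness assumption, and then upgrading the resulting equality of local $L$-factors to an equality of local $L$-parameters by exploiting the Iwahori-spherical structure at the ramified primes. First the bookkeeping. Since $F,G\in S_k(\Gamma)$, Proposition~\ref{piinfprop} gives $\pi_{1,\infty}\simeq\pi_{2,\infty}\simeq\pi_k^{(2)}$, so $\pi_1$ and $\pi_2$ have the same archimedean $L$- and $\varepsilon$-factors; the central characters $\omega_{\pi_1}$ and $\omega_{\pi_2}$ are Hecke characters agreeing at almost all places, hence are equal, hence agree at every place. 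Let $S$ be the finite set of primes $p$ with $\pi_{1,p}\not\simeq\pi_{2,p}$. As $\Gamma\supseteq B(N)$ with $N$ squarefree, projecting $\Phi_F$ (resp.\ $\Phi_G$) onto a copy of $\pi_1$ (resp.\ $\pi_2$) produces a nonzero vector fixed by $B(N)$; since $B(N)_p$ is the Iwahori subgroup of $G_2(\Z_p)$ for $p\mid N$, each $\pi_{i,p}$ with $p\mid N$ is Iwahori-spherical, so its $L$-parameter $\phi_{i,p}:=L(\pi_{i,p})$ is trivial on the inertia group $I_p$; for the remaining $p\in S$ (those not dividing $N$) the $\pi_{i,p}$ are unramified, so this holds there too.

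Next I would compare completed $L$-functions. Put $\Lambda(s)=L(s,\pi_1)/L(s,\pi_2)$; since the local factors at all places outside $S$ coincide, $\Lambda(s)=\prod_{p\in S}L(s,\pi_{1,p})/L(s,\pi_{2,p})$, a finite product of ratios of polynomials in $p^{-s}$, each $1$ at $s=+\infty$. Temperedness of the $\pi_{i,p}$ forces every pole of a local factor $L(s,\pi_{i,p})$ into $\Re(s)\le 0$, so $\Lambda$ is holomorphic and nowhere vanishing in $\Re(s)>0$; the same applied to contragredients shows $\widehat{\Lambda}(s):=L(s,\hat\pi_1)/L(s,\hat\pi_2)$ is holomorphic and nowhere vanishing in $\Re(s)>0$. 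The functional equations $L(s,\pi_i)=\varepsilon(s,\pi_i)L(1-s,\hat\pi_i)$ give $\Lambda(s)=\big(\varepsilon(s,\pi_1)/\varepsilon(s,\pi_2)\big)\widehat{\Lambda}(1-s)$, and the $\varepsilon$-ratio is a constant times $\prod_{p\in S}p^{a_p s}$ with $a_p\in\Z$, hence entire and nowhere vanishing. So $\Lambda$ is holomorphic and nowhere vanishing on all of $\C$. Now fix $p_0\in S$ and a putative pole $\alpha$ of $L(s,\pi_{1,p_0})$, pick $s_0$ with $p_0^{-s_0}=\alpha$, and let $s$ run through $s_0+\frac{2\pi ik}{\log p_0}$, $k\in\Z$: then $p_0^{-s}$ stays at $\alpha$ while, for $p\ne p_0$, $p^{-s}$ ranges over a dense subset of a circle (because $\log p/\log p_0\notin\Q$ by unique factorization). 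As a polynomial cannot vanish on a dense subset of a circle, holomorphy of $\Lambda$ forces $L(s,\pi_{2,p_0})$ to have the same pole $\alpha$; symmetrically, using that $\Lambda$ is nowhere vanishing, the poles of $L(s,\pi_{2,p_0})$ are poles of $L(s,\pi_{1,p_0})$. With the normalization at $s=+\infty$ this gives $L(s,\pi_{1,p})=L(s,\pi_{2,p})$ for every $p\in S$, hence for every finite prime.

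Finally I would upgrade this equality of spin $L$-factors to an equality of $L$-parameters. For $p\in S$ we now know $\phi_{1,p}$ and $\phi_{2,p}$ are tempered, trivial on $I_p$, have the same multiplier (equivalently, the same central character), and have the same spin $L$-factor. A tempered $\GSp_4(\C)$-parameter trivial on inertia has the form $\bigoplus_j\sigma_j\otimes\mathrm{Sp}(m_j)$, where $\mathrm{Sp}(m)$ denotes the $m$-dimensional special representation, each $\sigma_j$ is an unramified character satisfying $|\sigma_j(\mathrm{Fr})|=1$, and $\sum_j m_j=4$; its spin $L$-factor is $\prod_j\big(1-\sigma_j(\mathrm{Fr})\,p^{-(m_j-1)/2}\,p^{-s}\big)^{-1}$, so the absolute values of the reciprocal roots of this $L$-factor recover the $m_j$, and then the $\sigma_j(\mathrm{Fr})$; thus the spin $L$-factor determines the parameter as a $4$-dimensional representation. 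Since a $4$-dimensional Weil--Deligne representation admits an essentially unique $\GSp_4$-structure with a prescribed multiplier, it follows that $\phi_{1,p}=\phi_{2,p}$, i.e.\ $L(\pi_{1,p})=L(\pi_{2,p})$, for all primes $p$.

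The step I expect to be the main obstacle is this last one: the spin $L$-factor is a coarse invariant of the local parameter --- it sees only the Frobenius action on $(\ker N)^{I_p}$ --- so recovering the whole parameter from it genuinely uses both the Ramanujan-type bound coming from temperedness (to pin down $|\sigma_j(\mathrm{Fr})|=1$) and the squarefree-level hypothesis (to rule out inertial ramification invisible to the $L$-factor); one also has to dispose of the handful of degenerate configurations in which a $\GL_4(\C)$-isomorphism of parameters could fail to be a $\GSp_4(\C)$-conjugacy, which is exactly where matching the multipliers (via equality of global central characters) is used.
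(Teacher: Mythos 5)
Your proof is correct, and it reaches the stated conclusion, but it organizes the argument differently from the paper. The paper divides the two global functional equations and invokes a lemma of Schmidt to extract, at each prime $p$, a \emph{local} functional equation of the form $L(s,\pi_{1,p})/L(s,\pi_{2,p})=cp^{ms}\,L(1-s,\hat\pi_{1,p})/L(1-s,\hat\pi_{2,p})$; it then finishes by running through Schmidt's explicit table of Iwahori-spherical representations of $\GSp_4$ and checking, case by case, that a tempered Iwahori-spherical pair satisfying this constraint must have the same $L$-parameter. You instead push the analytic step further: by combining the functional equation with temperedness (poles of each local factor lie in $\Re(s)\le 0$), you show the ratio $\Lambda(s)$ is entire and nowhere vanishing, and then the irrationality of $\log p/\log p_0$ and the circle-density argument force cancellation prime by prime, giving \emph{equality} of local $L$-factors directly rather than merely a local functional equation. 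Your finishing step is then a structural one: a tempered inertia-trivial parameter for $\GSp_4$ is $\bigoplus_j\sigma_j\otimes\mathrm{Sp}(m_j)$ with unitary unramified $\sigma_j$, so the multiset of reciprocal roots of the spin $L$-factor (via their absolute values $p^{-(m_j-1)/2}$) reconstructs the full Weil--Deligne representation, and matching multipliers upgrades $\GL_4$-conjugacy to $\GSp_4$-conjugacy. In effect you trade the table lookup for a clean structure theorem for tempered unramified Weil--Deligne representations, which is arguably more conceptual and would transfer to other groups, at the cost of needing the (true, but not completely trivial) fact that for $\GSp_4$ a $\GL_4$-isomorphism of $L$-parameters with matching similitude is a $\GSp_4$-conjugacy. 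The circle-density step you redo from scratch is essentially what the cited Lemma 3.1.1 of Schmidt encapsulates, so the substantive divergence from the paper is the replacement of the case-by-case table verification by your structural ``$L$-factor determines the tempered unramified parameter'' argument.
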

\begin{proof}
Note that $\pi_{1,v}$ and $\pi_{2,v}$ are isomorphic at $v = \infty$ and at almost all finite primes $v=p$. So, dividing the functional equation of $L(s, \pi_1)$ by the functional equation of $L(s,\pi_2)$ (this is where we use our assumption that there exists a nice $L$-function theory for $S_k(\Gamma)$), and using~\cite[Lemma 3.1.1]{sch} we get, for each prime $p$, an equation $$\frac{L(s, \pi_{1,p})}{L(s, \pi_{2,p})} = cp^{ms} \frac{L(1-s, \widehat{\pi_{1,p}})}{L(1-s, \widehat{\pi_{2,p}})}$$ where $m$ is some integer and $c$ is some complex number (both depending on $p$). Since $\Gamma$ contains $B(N)$, it follows that both $\pi_{1,p}$ and $\pi_{2,p}$ are Iwahori spherical representations; furthermore they are both tempered. Now, going through the various possibilities listed in table 2 of~\cite{sch}, we can check that for any pair $\pi_{1,p}$, $\pi_{2,p}$ satisfying all the above conditions we have that $L(\pi_{1,p}) = L(\pi_{2,p})$.\footnote{I thank Ralf Schmidt for clarifying this point to me.}
\end{proof}

\begin{remark}Proposition~\ref{p:strongmult} is false without the temperedness assumption. Indeed, the various Saito-Kurokawa lifts of square-free level described in~\cite{schsaito} give rise to nearly equivalent representations of $G_2(\A)$ whose local components are isomorphic at almost all places yet do not even belong to the same $L$-packet in the remaining places.
\end{remark}

\begin{remark}If $\Gamma = \Sp_4(\Z)$, and $\pi_1$, $\pi_2$ are as in Proposition~\ref{p:strongmult}, the same proof shows in fact that $\pi_{1,p} \simeq \pi_{2,p}$ at all primes $p$. For general $\Gamma$, this is not true; indeed the various Yoshida lifts of some pair $(f,g)$ give rise to nearly equivalent but non-isomorphic representations.

\end{remark}

\begin{corollary}\label{weakyoshcor}Let $f$, $g$ be classical holomorphic newforms which satisfy the conditions for a scalar valued Yoshida lifting. Let $\Gamma$ be a congruence subgroup of $\Sp_4(\Z)$ such that $F \in S_k(\Gamma)$ is a Yoshida lift of $(f,g)$ and $G \in S_k(\Gamma)$ is a weak Yoshida lift of $(f,g)$. Assume that there exists a nice $L$-function theory for $S_k(\Gamma)$ and that there exists a squarefree integer $N$ such that $\Gamma$ contains $B(N)$. Then $G$ is a Yoshida lift of $(f,g)$.
\end{corollary}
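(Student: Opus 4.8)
The plan is to run a near-equivalence argument. By Definition~\ref{defyosh}, proving that $G$ is a Yoshida lift of $(f,g)$ means proving two things: that $\pi_G$ is irreducible, and that $L(\pi_{G,v}) = L(\pi_{f,v}) \oplus L(\pi_{g,v})$ at \emph{every} place $v$. We already know, from Definition~\ref{defweakyosh}, that all irreducible subrepresentations of $\pi_G$ are isomorphic to a single $\pi_G'$, for which this identity holds at almost all finite primes; and we know that $\pi_F$ is irreducible with $L(\pi_{F,v}) = L(\pi_{f,v}) \oplus L(\pi_{g,v})$ at every $v$. Thus $\pi_F$ and $\pi_G'$ are cuspidal representations of $G_2(\A)$ that are isomorphic at $\infty$ (both $\simeq \pi_k^{(2)}$ by Proposition~\ref{piinfprop}) and locally isomorphic at almost all finite primes (both equal, at almost every $p$, to the unramified principal series $\Pi_p$ of Remark~\ref{yoshbasicrem2}). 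The first---and main---step is to check that $\pi_F$ and $\pi_G'$ are tempered at every place. For $\pi_F$ this is immediate, since $\pi_f$ and $\pi_g$ are attached to holomorphic newforms of weight $\ge 2$ and hence are tempered everywhere (Ramanujan--Petersson at the finite places; discrete series or limits of discrete series at $\infty$), so their direct-sum $L$-parameter is tempered and hence so is each $\pi_{F,v}$. For $\pi_G'$ the conclusion lies deeper and is the crux of the matter: one uses that $\pi_G'$ is cuspidal and nearly equivalent to $\pi_F$, together with the conjectural input available throughout this subsection---Arthur's classification for $\GSp_4$, or the strong functorial transfer $\GSp_4 \to \GL_4$ assumed here (transfer $\pi_G'$ to $\GL_4(\A)$, observe that it agrees at almost all places with the isobaric sum attached to $\pi_f$ and $\pi_g$, apply strong multiplicity one on $\GL_4$ to identify the transfer with that isobaric sum, and carry temperedness back down to $\pi_G'$). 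This temperedness statement is the principal obstacle.

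Granting temperedness, I would apply Proposition~\ref{p:strongmult} with $\pi_1 = \pi_F$ and $\pi_2 = \pi_G'$: all the hypotheses hold---$F, G \in S_k(\Gamma)$, $\Gamma$ contains $B(N)$ with $N$ squarefree, a nice $L$-function theory for $S_k(\Gamma)$ is assumed, and $\pi_1, \pi_2$ are tempered everywhere and locally isomorphic at almost all primes---so the proposition gives $L(\pi_{F,p}) = L(\pi'_{G,p})$ at every finite prime $p$. Combined with $\pi_{F,\infty} \simeq \pi_k^{(2)} \simeq \pi'_{G,\infty}$, this yields $L(\pi'_{G,v}) = L(\pi_{F,v}) = L(\pi_{f,v}) \oplus L(\pi_{g,v})$ for all places $v$. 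That is the second of the two conditions we needed.

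Finally I would deduce irreducibility of $\pi_G$. The last identity shows that $\pi_G'$ is one of the cuspidal representations produced by Theorem~\ref{t:roberts}(1), each of which occurs with multiplicity one in $L^2_0(G_2(\Q)\bs G_2(\A),\chi)$, where $\chi$ denotes the common central character. But $\pi_G$, viewed as a subspace of $L^2_0(G_2(\Q)\bs G_2(\A),\chi)$, is a direct sum of copies of $\pi_G'$; multiplicity one forces exactly one copy, so $\pi_G = \pi_G'$ is irreducible. Hence $G$ meets both requirements of Definition~\ref{defyosh}, and is a Yoshida lift of $(f,g)$.
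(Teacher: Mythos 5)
Your overall plan---establish temperedness, then run the $L$-function functional-equation argument of Proposition~\ref{p:strongmult} to upgrade near-equivalence to equality of $L$-parameters at all places, then invoke multiplicity one (Theorem~\ref{t:roberts}(1)) to get irreducibility of $\pi_G$---is exactly the paper's route, and your temperedness argument for $\pi_F$ (via Deligne's bound for $\pi_f$, $\pi_g$ and the direct-sum structure of the parameter) is a fine alternative to the paper's. However, there is a genuine gap at the step you yourself flag as ``the crux of the matter'': temperedness of $\pi_G'$. You propose to get it from ``Arthur's classification for $\GSp_4$, or the strong functorial transfer $\GSp_4 \to \GL_4$ assumed here.'' But neither of those is a hypothesis of Corollary~\ref{weakyoshcor}. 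The corollary assumes only the existence of a \emph{nice $L$-function theory} for $S_k(\Gamma)$ (plus the squarefree Borel-level condition), and the paper explicitly emphasizes, in the surrounding discussion, that niceness of the $L$-function is a strictly weaker and more accessible assumption than the existence of a strong functorial lifting to $\GL_4$; the lifting hypothesis is reserved for the \emph{subsequent} proposition, not this corollary. So your argument proves a weaker statement under stronger hypotheses.

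The paper instead obtains temperedness of $\pi_G'$ (and $\pi_F$) \emph{unconditionally} via a theorem of Weissauer~\cite{weissram}, which asserts the Ramanujan bound for cuspidal holomorphic representations of $\GSp_4(\A)$ that are not of Saito--Kurokawa (CAP) type. To make Weissauer applicable one must first rule out the Saito--Kurokawa case, which the paper does by examining the primes $p$ where the local $L$-parameter of $\pi_G'$ is known to equal $L(\pi_{f,p}) \oplus L(\pi_{g,p})$: that direct sum of two (tempered, irreducible) 2-dimensional parameters is not of the shape occurring in a Saito--Kurokawa packet, so neither $F$ nor $G$ can be a Saito--Kurokawa lift. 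This is the missing ingredient in your proposal; with it in place of the Arthur/transfer invocation, the rest of your argument goes through as written.
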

\begin{proof}By looking at the primes $p$ where~\eqref{lparsum2} is satisfied by both $F$ and $G$, we conclude that neither is of  Saito-Kurokawa type. A result of Weissauer~\cite{weissram} now tells us that the irreducible subrepresentations of $\pi_G$ and $\pi_F$ are tempered everywhere. The corollary follows from Proposition~\ref{p:strongmult} and the multiplicity one property mentioned earlier (Theorem~\ref{t:roberts}).
\end{proof}

Corollary~\ref{weakyoshcor} tells us that weak Yoshida lifts are exactly the same as Yoshida lifts provided all the relevant Siegel cusp forms are defined with respect to a Borel-type congruence subgroup of squarefree level and we assume the existence of a nice $L$-function theory for such forms. We will now prove that weak Yoshida lifts and Yoshida lifts are \emph{always} the same if we make a stronger assumption.

Let $v$ be a place of $\Q$. Let $\Phi(\GL_4(\Q_v))$ denote the set of isomorphism classes of admissible homomorphism $\phi:W_v' \rightarrow \GL_4(\C)$ where $W_v'$ denotes the Weil-Deligne group of $\Q_v$. Let $\Pi(\GL_4(\Q_v))$ denote the set of isomorphism classes of irreducible admissible representations of $\GL_4(\Q_v)$. By local Langlands for $\GL_n$, we have a bijective map
\begin{equation}\label{locallanglands2}L:\Pi(\GL_4(\Q_v)) \rightarrow \Phi(GL_4(\Q_v)),\end{equation} satisfying certain conditions (including, in particular, compatibility of $L$ and $\varepsilon$-factors).

\begin{definition}Given a positive integer $k$ and a congruence subgroup $\Gamma$ of $\Sp_4(\Q)$, we say that there exists a strong functorial lifting to $\GL_4$ for $S_k(\Gamma)$  if for all $F \in S_k(\Gamma)$ and each irreducible subrepresentation $\pi = \otimes_v \pi_v$ of $\pi_F$, there exists an irreducible automorphic representation\footnote{For the definition of an automorphic representation, we refer the reader to~\cite{borel-jacquet}.} $\pi'=\otimes_v\pi'_v$ of $\GL_4(\A)$ such that $L(\pi_v) = L(\pi'_v)$ for all places $v$.

\end{definition}

The existence of a strong functorial lifting to $\GL_4$ for $S_k(\Gamma)$ for all $k$ and $\Gamma$ is predicted by Langlands' general conjectures. A proof should follow from the far-reaching work of Arthur on classification of automorphic representations using the trace formula. At the time of this writing, it appears that Arthur's results are still conditional on certain stabilization results. We note that the existence of such a lifting for $S_k(\Gamma)$ certainly implies the existence of a nice $L$-function theory for $S_k(\Gamma)$. However, proving functorial lifting  is generally a much deeper problem than proving niceness of a single sort of $L$-function. If $\Gamma = \Sp_4(\Z)$, a strong functorial lifting to $\GL_4$ for $S_k(\Gamma)$ was proved in joint work of the author with Pitale and Schmidt~\cite{transfer} (and it was also proved that this lifting is cuspidal provided $F$ is not  a Saito-Kurokawa lift).

\begin{proposition}Let $f$, $g$ be classical holomorphic newforms that satisfy the conditions for a scalar valued Yoshida lifting. Let $\Gamma$ be a congruence subgroup of $\Sp_4(\Z)$ such that $F \in S_k(\Gamma)$ is a weak Yoshida lift of $(f,g)$. Assume that there exists a strong functorial lifting to $\GL_4$ for $S_k(\Gamma)$. Then $F$ is a Yoshida lift of $(f,g)$.

\end{proposition}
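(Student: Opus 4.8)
The plan is to transport the problem to $\GL_4$ using the assumed functorial lift, to upgrade the ``almost all places'' in the definition of a weak Yoshida lift to ``all places'' via the rigidity of isobaric automorphic representations of $\GL_4$, and finally to invoke the multiplicity-one assertion of Theorem~\ref{t:roberts} to deduce that $\pi_F$ is actually irreducible. Write $k_1$ for the larger of the two weights of $f,g$, so that $F\in S_{k_1/2+1}(\Gamma)$, and let $\chi$ be the common primitive character. By Definition~\ref{defweakyosh} there is an irreducible cuspidal representation $\pi'$ of $G_2(\A)$ such that every irreducible subrepresentation of $\pi_F$ is isomorphic to $\pi'$, we have $L(\pi'_p)=L(\pi_{f,p})\oplus L(\pi_{g,p})$ for almost all primes $p$, and by Proposition~\ref{piinfprop} $\pi'_\infty\simeq\pi_{k_1/2+1}^{(2)}$. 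Since holomorphic newforms of weight $\ge 2$ are tempered at every place, at the infinitely many primes $p$ where the displayed identity of $L$-parameters holds the parameter $L(\pi'_p)$ is tempered; hence $\pi'$ is not of Saito--Kurokawa (nor of any CAP) type, and so by Weissauer's theorem~\cite{weissram} the representation $\pi'$ is tempered at every place.

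Next I would bring in the functorial lift. By hypothesis there is an irreducible automorphic representation $\Pi'$ of $\GL_4(\A)$ with $L(\Pi'_v)=L(\pi'_v)$ at every place $v$, where $L(\pi'_v)$ denotes the four-dimensional parameter attached to $\pi'_v$ by the local Langlands correspondence for $G_2$ composed with the standard embedding $\GSp_4(\C)\hookrightarrow\GL_4(\C)$. Since $\pi'$ is tempered everywhere, each $\Pi'_v$ is a tempered representation of $\GL_4(\Q_v)$; by the classification of the automorphic spectrum of $\GL_n$ (Langlands, Jacquet--Shalika, M\oe glin--Waldspurger), $\Pi'$ is therefore an isobaric sum of unitary cuspidal automorphic representations. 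On the other hand $\Pi:=\pi_f\boxplus\pi_g$ is an isobaric automorphic representation of $\GL_4(\A)$ with $L(\Pi_v)=L(\pi_{f,v})\oplus L(\pi_{g,v})$ for all $v$, and the identity $L(\Pi'_p)=L(\pi'_p)=L(\pi_{f,p})\oplus L(\pi_{g,p})=L(\Pi_p)$, valid at almost all $p$, shows by the bijectivity of local Langlands for $\GL_4$ that $\Pi'_p\simeq\Pi_p$ for almost all $p$. Strong multiplicity one for isobaric automorphic representations of $\GL_n$ (Jacquet--Shalika) now forces $\Pi'\simeq\Pi$, and hence $L(\pi'_v)=L(\Pi'_v)=L(\Pi_v)=L(\pi_{f,v})\oplus L(\pi_{g,v})$ for \emph{every} place $v$.

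It remains to show that $\pi_F$ is irreducible. The newforms $f$ and $g$ satisfy the hypotheses of Theorem~\ref{t:roberts}: they are non-isomorphic, tempered everywhere, have the same central character, have the required archimedean components, and --- by the conditions for a scalar valued Yoshida lifting --- are simultaneously discrete series at some finite prime. The representation $\pi'$ is cuspidal with central character $\chi$, has $\pi'_\infty\simeq\pi_{k_1/2+1}^{(2)}$, and satisfies $L(\pi'_v)=L(\pi_{f,v})\oplus L(\pi_{g,v})$ at all $v$ by the previous paragraph; hence, by part (2) of Theorem~\ref{t:roberts}, $\pi'$ is one of the finitely many representations attached to $(f,g)$ there, and in particular it occurs with multiplicity one in $L^2_0(G_2(\Q)\bs G_2(\A),\chi)$. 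Now $\pi_F$ lies in $L^2_0(G_2(\Q)\bs G_2(\A),\chi)$ and is a finite direct sum of irreducible subspaces, each isomorphic to $\pi'$; multiplicity one forces all of them to equal the unique subspace in the isomorphism class of $\pi'$, so $\pi_F$ is irreducible. Combined with $L(\pi_{F,v})=L(\pi'_v)=L(\pi_{f,v})\oplus L(\pi_{g,v})$ for all $v$, this is precisely the statement that $F$ is a Yoshida lift of $(f,g)$.

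The main obstacle is the middle step: one must know that the $\GL_4$-transfer $\Pi'$ is an isobaric sum of \emph{cuspidal} representations, rather than a residual or Speh-type constituent whose local $L$-parameters would carry extra monodromy and fail to be a naive direct sum --- and it is precisely here that the temperedness of $\pi'$, and therefore Weissauer's theorem together with the fact that $\pi'$ is not CAP, is indispensable. Once $\Pi'$ is known to be isobaric and tempered, the strong multiplicity one theorem for $\GL_n$ supplies the required rigidity with no further difficulty.
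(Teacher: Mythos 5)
Your argument is correct and, in outline, runs parallel to the paper's: first establish that $\pi'$ (and hence its $\GL_4$-transfer $\Pi'$) is tempered everywhere via Weissauer's theorem, then use $\GL_4$ rigidity to upgrade ``almost all places'' to ``all places'', and finally invoke the multiplicity-one statement of Theorem~\ref{t:roberts} to get irreducibility of $\pi_F$ (a step the paper actually leaves implicit, so your spelling it out is a genuine improvement). The difference lies in \emph{which} form of $\GL_4$ rigidity is used. You first argue that $\Pi'$ is an isobaric sum of unitary cuspidals and then apply strong multiplicity one for isobaric representations (Jacquet--Shalika); the intermediate claim is correct, but you have compressed the argument. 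What the classification theorem gives is only that $\Pi'$ is a \emph{constituent} of $\mathrm{Ind}(\sigma_1|\det|^{t_1}\otimes\cdots\otimes\sigma_r|\det|^{t_r})$ for cuspidal $\sigma_i$; you then need temperedness at unramified places together with the Jacquet--Shalika bounds (or unitarity of the central characters) to force $t_i=0$, and Bernstein's irreducibility of unitary parabolic induction for $\GL_n$ to conclude that the induction is irreducible, hence equal to the isobaric sum. The paper bypasses all of this by invoking Theorem~4.4 of Jacquet--Shalika directly, which applies to any automorphic representation nearly equivalent to an isobaric one (no isobaricity of $\Pi'$ required) and places $\Pi'_v$ inside the local induction $\mathrm{Ind}(\pi_{f,v}\otimes\pi_{g,v})$ at \emph{every} place; temperedness and the Bernstein--Zelevinsky classification then pin down $\Pi'_v$ uniquely. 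The paper's route is a bit leaner in machinery; yours is conceptually similar and equally valid once the isobaricity claim is justified as above.
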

\begin{proof}As before, we deduce that $F$ is not of Saito-Kurokawa type. Let $\pi$ be any irreducible subrepresentation of $\pi_F$ and $\pi'$ be an irreducible automorphic representation of $\GL_4(\A)$ such that $L(\pi_v) = L(\pi'_v)$ for all places $v$. Thus $L(\pi'_v) = L(\pi_{f, v}) \oplus L(\pi_{g, v})$ for almost all places $v$. By the result of Weissauer~\cite{weissram}, $\pi'$ is tempered everywhere. We need to prove that $L(\pi'_v) = L(\pi_{f, v}) \oplus L(\pi_{g, v})$ at the remaining places $v$. By Theorem 4.4 of~\cite{jacquet-shalika-81-2}, we know that \emph{every} place $v$, $\pi'_v$ is a constituent of $\mathrm{Ind}_{P(\Q_v)}^{\GL_4(\Q_v)}(\pi_{f, v} \otimes \pi_{g, v})$, where $P$ denotes the $(2,2)$ parabolic of $\GL_4$. Now, the results of Bernstein and Zelevinsky~\cite{berzel}, along with the fact that the representations $\pi_{f, v}$, $\pi_{g, v}$, $\pi'_{v}$ are all tempered, identify $\pi'_{v}$ uniquely, and imply in particular that $L(\pi'_v) = L(\pi_{f, v}) \oplus L(\pi_{g, v})$, as required.

\end{proof}

\section{Ratio of Petersson norms and special $L$-values}

\subsection{The main result}\label{s:mainres} Let us recall some of the concepts defined in the previous section. Given two classical holomorphic newforms $f$ and $g$ that are not multiples of each other and whose nebentypes are associated to the same primitive character, we defined the notion of a \emph{Yoshida lift} (Definition~\ref{defyosh}). We showed that a Yoshida lift exists if and only if $f$ and $g$ \emph{satisfy the conditions for a scalar valued Yoshida lifting}, a phrase that was precisely defined in Definition~\ref{d:yosh}. Assuming that $f$ and $g$ are newforms satisfying these conditions, we also defined the \emph{Yoshida space} $Y[f,g]$ (Definition~\ref{defyoshspa}), the \emph{weak Yoshida lifts} (Definition~\ref{defweakyosh}), and the \emph{weak Yoshida space} $Y'[f,g]$ (Definition~\ref{defweakyoshspa}). All these lifts/spaces consisted of Siegel cusp forms of degree 2 and weight $k/2 + 1$ (assuming the weight of $g$ is 2 and that of $f$ is $k$). We noted that the vector space $Y'[f,g]$ contains the span of all the Yoshida lifts; we also showed that it is in fact exactly equal to this span if we assume the existence of a Langlands transfer from $\GSp_4$ to $\GL_4$.

We now state the main result of this paper.

\begin{theorem}\label{t:main} Let $f $, $g$ be classical holomorphic newforms, of weights $2k$ and 2 respectively, that satisfy the conditions for a scalar valued Yoshida lifting. Let $F \in Y'[f,g]$. Suppose that $k \ge 6$ and that all the Fourier coefficients of $f, g$ and $F$ belong to $\Q^{\CM}$. Then $$\frac{\pi^2\langle F, F\rangle}{\langle f, f\rangle} \in \Q^{\CM},$$ and moreover, for $\sigma \in \Aut(\C)$, we have $$\sigma\left(\frac{\pi^2\langle F, F\rangle}{\langle f, f\rangle}\right) =  \frac{\pi^2\langle {}^\sigma\!F, {}^\sigma\!F\rangle}{\langle {}^\sigma\!f, {}^\sigma\!f\rangle}.$$
\end{theorem}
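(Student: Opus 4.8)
The plan is to deduce the statement from the algebraicity of a single critical value of a \emph{degree $8$} $L$-function, exploiting the fact that for a Yoshida lift this $L$-function factors as a product of two $\GL_2\times\GL_2$ Rankin--Selberg $L$-functions. First I would reduce to a convenient lift. Since $f,g$ satisfy the conditions for a scalar valued Yoshida lifting, Theorem~\ref{t:yoshida} provides a genuine Yoshida lift of $(f,g)$, and Theorem~\ref{rationalyoshida}(1) then produces a genuine Yoshida lift $F_0$ of $(f,g)$ with $\pi_{F_0}$ irreducible and with all Fourier coefficients in the number field $\Q(f,g)$, which is contained in $\Q^{\CM}$ by Proposition~\ref{p:blaharam}. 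For $F_0$ the $L$-parameter identity $L(\pi_{F_0,v})=L(\pi_{f,v})\oplus L(\pi_{g,v})$ holds at \emph{every} place $v$, so by Remark~\ref{yoshbasicrem} one has the factorization of complete $L$-functions $L(s,\pi_{F_0}\times\pi_h)=L(s,\pi_f\times\pi_h)\,L(s,\pi_g\times\pi_h)$ for every cuspidal representation $\pi_h$ of $\GL_2(\A)$; note in particular that the archimedean $\Gamma$-factor of $L(s,\pi_{F_0}\times\pi_h)$ is the product of those of the two factors, so an integer $m$ is critical for $L(s,\pi_{F_0}\times\pi_h)$ if and only if it is critical for both factors simultaneously. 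Since $F$ and $F_0$ both lie in $Y'[f,g]$ and both have Fourier coefficients in $\Q^{\CM}$, Theorem~\ref{rationalyoshida}(4) (equivalently Corollary~\ref{corpeteq}) gives $\frac{\langle F,F\rangle}{\langle F_0,F_0\rangle}\in\Q^{\CM}$ with the expected $\Aut(\C)$-equivariance; it therefore suffices to prove the theorem with $F$ replaced by $F_0$.

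Next I would fix an auxiliary classical holomorphic newform $h$, of some weight $\ell$ with $3\le \ell\le k$ (say) and with $\overline{\Q}$-rational Fourier coefficients, chosen so that: (i) the degree $8$ $L$-function $L(s,\pi_{F_0}\times\pi_h)$ admits a critical point $m$ lying in the range covered by Morimoto's special value theorem (Theorem~\ref{tgsp4gl2} of~\cite{morimoto}); by the observation above, $m$ is then automatically critical for $L(s,\pi_f\times\pi_h)$ (whose associated Petersson period is $\langle f,f\rangle$, since $\mathrm{wt}(f)=2k>\ell$) and for $L(s,\pi_g\times\pi_h)$ (whose associated period is $\langle h,h\rangle$, since $\mathrm{wt}(h)=\ell>2=\mathrm{wt}(g)$); and (ii) $L(m,\pi_f\times\pi_h)\neq 0\neq L(m,\pi_g\times\pi_h)$, which one arranges --- twisting $h$ by a suitable Dirichlet character if necessary --- by known non-vanishing results for $\GL_2\times\GL_2$ $L$-functions in the relevant range. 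The hypothesis $k\ge 6$ is exactly what guarantees that such a pair $(h,m)$ exists and meets Morimoto's constraints. Morimoto's theorem then yields $\frac{L(m,\pi_{F_0}\times\pi_h)}{\pi^{a}\,\langle F_0,F_0\rangle\,\langle h,h\rangle}\in\Q^{\CM}$, while Shimura's results~\cite{shi76} yield $\frac{L(m,\pi_f\times\pi_h)}{\pi^{b}\,\langle f,f\rangle}\in\Q^{\CM}$ and $\frac{L(m,\pi_g\times\pi_h)}{\pi^{c}\,\langle h,h\rangle}\in\Q^{\CM}$ for explicit integers $a,b,c$, each statement carrying its own $\Aut(\C)$-equivariance.

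Now I would combine: using the factorization of the first paragraph together with the non-vanishing of (ii), divide Morimoto's relation by the two Shimura relations; the factors $\langle h,h\rangle$ cancel (they occur to the first power on each side), leaving $\frac{\langle F_0,F_0\rangle}{\langle f,f\rangle}\in\pi^{\,b+c-a}\,\Q^{\CM}$. A bookkeeping of the explicit powers of $\pi$ in the theorems of Morimoto and Shimura shows $b+c-a=-2$, hence $\frac{\pi^2\langle F_0,F_0\rangle}{\langle f,f\rangle}\in\Q^{\CM}$. For the $\Aut(\C)$-equivariance one applies $\sigma\in\Aut(\C)$ throughout: by Theorem~\ref{rationalyoshida}(1), ${}^\sigma\!F_0$ is a genuine Yoshida lift of $({}^\sigma\!f,{}^\sigma\!g)$; ${}^\sigma\!h$ is again an admissible auxiliary form (its weight is unchanged, and criticality and non-vanishing are preserved under $\sigma$); the $L$-parameter factorization is $\sigma$-equivariant by Lemma~\ref{lemmaarithmeticityyosh}; and the three special value relations are $\sigma$-equivariant. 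Combining with the reduction step of the first paragraph gives $\sigma\bigl(\tfrac{\pi^2\langle F,F\rangle}{\langle f,f\rangle}\bigr)=\tfrac{\pi^2\langle {}^\sigma\!F,{}^\sigma\!F\rangle}{\langle {}^\sigma\!f,{}^\sigma\!f\rangle}$. (That the value lies in $\Q^{\CM}$ rather than merely in $\overline{\Q}$ can also be seen a posteriori from this equivariance together with positivity of Petersson norms, which forces the ratio to be a totally positive real algebraic number, hence an element of $\Q^{\tr}\subseteq\Q^{\CM}$.)

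The main obstacle is the choice made in the second paragraph: exhibiting one auxiliary newform $h$ and one critical point $m$ that lies in the range of Morimoto's theorem (thereby also being critical for both $\GL_2\times\GL_2$ factors with the expected Petersson periods) and at which both factors are non-vanishing. This is a delicate matching of critical strips with the Hodge/infinity-type data of the relevant motives, and it is precisely there that the numerical hypothesis $k\ge 6$ is used; by comparison, the bookkeeping of the powers of $\pi$ that produces exactly $\pi^2$ and the verification of the various $\Aut(\C)$-equivariances are routine.
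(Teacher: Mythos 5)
Your overall strategy --- reduce to an honest Yoshida lift with $\CM$-algebraic coefficients, introduce an auxiliary newform $h$, combine Morimoto's $\GSp_4\times\GL_2$ special value theorem with Shimura's $\GL_2\times\GL_2$ theorem, then divide and track powers of $\pi$ --- is exactly the approach the paper takes. However there are two concrete problems with the way you set up the auxiliary form.

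First, the weight of $h$. You propose $h$ of weight $\ell$ with $3\le\ell\le k$. But Theorem~\ref{tgsp4gl2} as stated (Morimoto's result ``in the equal weight case,'' as the paper explicitly says) requires the auxiliary $\GL_2$ form to have the \emph{same} weight as the Siegel form $F$, which is $k+1$; with $\ell\le k$ the hypotheses of that theorem are simply never met, and there is no critical point $m$ in its range. The paper chooses $h$ of weight exactly $k+1$ and evaluates at $m=(k-1)/2$; this single $m$ sits simultaneously in Morimoto's range for $L(s,\pi_F\times\pi_h)$ and in Shimura's ranges for $L(s,\pi_f\times\pi_h)$ and $L(s,\pi_g\times\pi_h)$, with the Petersson periods coming out to be $\langle F,F\rangle\langle h,h\rangle$, $\langle f,f\rangle$, $\langle h,h\rangle$ respectively so that $\langle h,h\rangle$ cancels. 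Your period bookkeeping is correct, but only once $\ell$ is fixed at $k+1$.

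Second, once $\ell=k+1$ is forced, there is a parity obstruction you do not address: when $k$ is even, $k+1$ is odd, and a newform of odd weight must have an odd nebentypus --- but the natural choice $\overline\chi$ (with $\chi$ the common character of $f,g$) is even. The paper resolves this by splitting into the cases $k$ odd and $k$ even, and in the even case picking an auxiliary odd quadratic character $\epsilon$, taking $h$ to have character $\overline{\chi\epsilon}$, and absorbing the resulting Gauss sums $G(\epsilon)$ into the definitions of $C(F,h)$, $C(f,h)$, $C(h,g)$. Your phrase ``twisting $h$ by a suitable Dirichlet character if necessary'' gestures in this direction but is aimed at a different (and spurious) problem, namely non-vanishing of the auxiliary $L$-values. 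In fact non-vanishing is automatic here: $m=(k-1)/2>1$ for $k\ge6$, so all three $L$-values are evaluated in the region of absolute convergence of the relevant Euler products, and in particular $L_{\mathfrak f}(m,\pi_F\times\pi_h)\neq0$; no auxiliary twist or appeal to non-vanishing theorems is needed for that. The twist \emph{is} needed, but for the parity of the nebentypus, not for non-vanishing.

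With $\ell$ corrected to $k+1$ and the even/odd case split (and the attendant $G(\epsilon)$ normalization) added, your argument becomes the paper's proof of Theorem~\ref{t:main}.
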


\begin{remark}By Theorem~\ref{rationalyoshida}, there is no real loss of generality in assuming that all the forms involved have $\CM$ algebraic Fourier coefficients.
\end{remark}

\begin{remark}Let $f $, $g_1$, $g_2$ be classical holomorphic newforms such that the pairs $(f,g_1)$ and $(f, g_2)$ both satisfy the conditions for a scalar valued Yoshida lifting (where $g_1$ and $g_2$ both have weight 2). Suppose that $F_1 \in Y'[f, g_1]$, and $F_2 \in Y'[f, g_2]$. Then, as a corollary to Theorem~\ref{t:main}, we deduce (under the standard algebraicity assumptions on the Fourier coefficients) that the ratio $\frac{\langle F_1, F_1\rangle}{\langle F_2, F_2\rangle}$ is algebraic and satisfies the usual  $\Aut(\C)$ equivariance. This is quite surprising because $F_1$ and $F_2$ have different Hecke eigenvalues at infinitely many primes provided $g_1$ and $g_2$ are not multiples of each other, and so the algebraicity of the  ratio $\frac{\langle F_1, F_1\rangle}{\langle F_2, F_2\rangle}$ goes far beyond what Corollary~\ref{corpeteq} provides.
\end{remark}

\medskip

As an initial key step to proving Theorem~\ref{t:main}, we observe that it suffices to prove it in the case that $F$ is actually a Yoshida lift. This is because of Corollary~\ref{corpeteq}, and the fact that given $f$, $g$ as in Theorem~\ref{t:main}, there does indeed exist a Yoshida lift with $\CM$ algebraic Fourier coefficients.

\subsection{Special value results for certain $L$-functions}

In this subsection, we will recall certain special $L$-values results that will be crucial for the proof of Theorem~\ref{t:main}. Let us first describe the general philosophy. If $L(s, \mathcal{M})$ is an arithmetically defined (or motivic) $L$-series associated to an arithmetic object $\mathcal{M}$, it is of interest to study its values at certain critical points $s=m$. For these critical points, conjectures due to Deligne predict that  $L(m,\mathcal{M})$ is the product of a suitable transcendental number $\Omega$ and an algebraic number $A(m,\mathcal{M})$, and furthermore, if $\sigma$ is an automorphism of $\C$, then $\sigma(A(m,\mathcal{M}))= A(m, \leftexp{\sigma}{\mathcal{M}})$.
In this subsection, we will recall some critical value results in the spirit of the above conjecture for $L$-functions associated to $\GL_2 \times \GL_2$ and $\GSp_4 \times \GL_2$. We will recall the results only in the form that we need (and not strive to state the most general result that is known).
\subsubsection*{Special values for $\GL_2 \times \GL_2$}
Let $g \in S_{k}(M, \chi)$, $h \in S_l( N, \psi)$ be classical holomorphic newforms. Then, $\pi_g$ and $\pi_h$ are irreducible, cuspidal representations of $\GL_2(\A)$. The $L$-function $L(s, \pi_f \times \pi_g) = \prod_v L(s, \pi_{f,v} \times \pi_{g,v}) $ is defined using the local Langlands correspondence (it is known to coincide with the definition obtained from the Rankin-Selberg method, or the method of Shahidi). We let $L_\mathfrak{f}(s, \pi_f \times \pi_g) = \prod_p L(s, \pi_{f,p} \times \pi_{g,p})$ denote the finite part of this $L$-function. The following result follows directly from Theorem 3 of~\cite{shi76} after adjusting for normalizations.

\begin{theorem}[Shimura~\cite{shi76}]\label{tgl2gl2}Let $g \in S_{k}(M, \chi)$, $h \in S_l( N, \psi)$ be classical holomorphic newforms such that $k>l$ and all the Fourier coefficients of $f$ and $g$ lie in $\Q^{\CM}$. For $m \in \frac12 \Z$, define $$C(m, g, h) = \frac{L_\mathfrak{f}(m, \pi_g\times \pi_h)}{\pi^{2m+k}i^{1-k}G(\chi \psi)\langle g, g \rangle}.$$ Then, for $m \in \left[\frac12, \frac{k-l}2\right] \cap (\Z + \frac{k+l}2)$, we have $C(m, g, h) \in \overline{\Q}$ and for all $\sigma \in \Aut(\C)$, we have $\sigma(C(m, g, h))= C(m, {}^\sigma\!g, {}^\sigma\!h).$
\end{theorem}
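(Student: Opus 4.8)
The plan is to deduce the statement directly from Shimura's Theorem~3 in~\cite{shi76}, the entire content of the argument being a careful comparison of normalizations; no new conceptual ingredient is needed.

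First I would identify the automorphically normalized $L$-function $L_\mathfrak{f}(s,\pi_g\times\pi_h)$ with the classical Rankin--Selberg Dirichlet series attached to $g$ and $h$. Concretely, the naive series $\sum_{n\ge 1}a(g,n)\overline{a(h,n)}\,n^{-s}$ considered by Shimura equals, after a shift $s\mapsto s+c$ for an explicit constant $c$ depending on $k$ and $l$, the product of $L_\mathfrak{f}(\,\cdot\,,\pi_g\times\pi_h)$ with finitely many correcting factors: the local Euler factors at the primes $p\mid MN$ (where the naive local factor differs from the one given by local Langlands) and the extra Euler product coming from the Eisenstein series in the Rankin--Selberg unfolding. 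Each such correcting factor, evaluated at a critical point, is an algebraic number (indeed an element of $\Q^{\CM}$ times a power of $\pi$) whose formation commutes with the action of $\Aut(\C)$ — this follows either from the $\Aut(\C)$-equivariance of the local Langlands correspondence for $\GL_2$, or, if one prefers an elementary check, from the short explicit list of possibilities for $\pi_{g,p}$ and $\pi_{h,p}$. Under the shift $s\mapsto s+c$, the range $[\tfrac12,\tfrac{k-l}{2}]\cap(\Z+\tfrac{k+l}{2})$ corresponds exactly to the set of critical integer arguments handled in~\cite[Thm.~3]{shi76}.

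Next I would transfer the period. Shimura's theorem presents the critical value, in his normalization, as an algebraic multiple of $\pi^{\alpha}i^{\beta}G(\chi\psi)\langle g,g\rangle$, where $\langle g,g\rangle$ is the Petersson norm of the form of \emph{larger} weight and $\alpha,\beta$ are explicit functions of $k,l$ and the argument. Tracking (a) the variable shift $s\mapsto s+c$ above, (b) the elementary ratio between Shimura's normalization of the Petersson inner product and the volume-normalized one used in this paper (a rational number times a power of $\pi$), and (c) the elementary identity $G(\bar\eta)=\eta(-1)\overline{G(\eta)}$ for the Gauss sum of a primitive character, a direct comparison of exponents yields $\alpha=2m+k$ and $\beta=1-k$ — precisely the shape of the denominator in the definition of $C(m,g,h)$. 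This gives $C(m,g,h)\in\overline{\Q}$ for $m$ in the stated range. For the $\Aut(\C)$-equivariance, Shimura's Theorem~3 already asserts that $\sigma\in\Aut(\C)$ carries his normalized ratio to the corresponding ratio for ${}^\sigma\!g$ and ${}^\sigma\!h$, so it remains only to check that the extra factors relating $C(m,g,h)$ to Shimura's ratio transform correctly: the correcting factors were handled in the previous paragraph, and for the Gauss sum one invokes the classical transformation $\sigma(G(\eta))=u_\sigma(\eta)\,G({}^\sigma\!\eta)$, where $u_\sigma(\eta)$ is an explicit root of unity encoding the action of $\sigma$ on roots of unity; this twisting factor is exactly the one that Shimura's statement already builds into the $\sigma$-action on the critical value (equivalently, $G(\chi\psi)$ differs by an element of $\Q^{\CM}$ from a product of local Gauss sums attached to $\chi\psi$, each a factor of the $\varepsilon$-factor of $\pi_{g,p}\times\pi_{h,p}$ and hence $\Aut(\C)$-equivariant). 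Assembling these observations yields $\sigma(C(m,g,h))=C(m,{}^\sigma\!g,{}^\sigma\!h)$.

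The main obstacle is precisely the bookkeeping of the second and third paragraphs: pinning down the exact powers of $\pi$ and of $i$ — including the extra factor of $\pi$ introduced by the volume normalization of the Petersson product, and the half-integer subtleties when $k+l$ is odd — and verifying that every correcting factor at the ramified primes (and from the Eisenstein series) is algebraic and behaves $\Aut(\C)$-equivariantly. None of this goes beyond a routine, if somewhat tedious, verification against~\cite{shi76}.
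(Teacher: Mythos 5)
Your proposal is correct and takes essentially the same approach as the paper: the paper's own ``proof'' of Theorem~\ref{tgl2gl2} is the single remark that it ``follows directly from Theorem 3 of~\cite{shi76} after adjusting for normalizations,'' and your plan is precisely a detailed unpacking of that normalization comparison (shift of variable, ramified Euler factors, Petersson-norm and Gauss-sum bookkeeping, and the resulting powers of $\pi$ and $i$). Nothing in the paper goes beyond what you outline, so no further comparison is needed.
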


\subsubsection*{Special values for $\GSp_4 \times \GL_2$}

Let $F \in S_k^{(2)}$ be such that $\pi_F$ is irreducible. Let $g \in S_{l}(M, \chi)$ be a classical holomorphic newform. The $L$-function $L(s, \pi_F \times \pi_g) = \prod_v L(s, \pi_{F,v} \times \pi_{g,v}) $ is defined using the local Langlands correspondence. We let $L_\mathfrak{f}(s, \pi_F \times \pi_g) = \prod_p L(s, \pi_{F,p} \times \pi_{g,p})$ denote the finite part of this $L$-function. There is a long sequence of results by various people on the critical values of $L_\mathfrak{f}(s, \pi_F \times \pi_g)$. We mention in particular the papers of   Furusawa~\cite{fur}, B\"ocherer--Heim~\cite{heimboch}, and various combinations of Pitale, Schmidt and the author~\cite{pitsch, pitale-bessel, saha1, sah2, transfer}.  However, the most general result in the \emph{equal weight case} has been obtained recently by Morimoto~\cite{morimoto}. The following result follows directly from Theorem I of~\cite{morimoto} after adjusting for normalizations.
\begin{theorem}[Morimoto~\cite{morimoto}]\label{tgsp4gl2}Let $F \in S_k^{(2)}$ be such that $\pi_F$ is irreducible and $g \in S_{k}(M, \chi)$ be a classical holomorphic newform. Suppose that $k>6$ and all the Fourier coefficients of $F$ and $g$ lie in $\Q^{\CM}$. Let $\psi$ be the primitive Dirichlet character canonically attached to the central character of $\pi_F$.\footnote{Equivalently, $F \in S_k^{(2)}(\psi)$.} For $m \in \frac12\Z$, define $$C(m, F, g) = \frac{L_\mathfrak{f}(m, \pi_F\times \pi_g)}{\pi^{4m+3k-1}i^{k}G(\chi \psi)^2\langle F, F \rangle \langle g, g \rangle}.$$ Then, for $m \in \left[3, \frac{k}2 - 1\right] \cap (\Z + \frac{k}2)$, we have $C(m, F, g) \in \overline{\Q}$ and for all $\sigma \in \Aut(\C)$, we have $\sigma(C(m, F, g))= C(m, {}^\sigma\!F, {}^\sigma\!g).$
\end{theorem}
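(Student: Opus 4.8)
The plan is to deduce Theorem~\ref{tgsp4gl2} from Theorem~I of Morimoto~\cite{morimoto} by specialization and renormalization. First I would restate Morimoto's result in his own conventions: he considers a cuspidal automorphic representation $\Pi$ of $\GSp_4(\A)$ whose archimedean component is the holomorphic (limit of) discrete series representation of scalar weight $k$, together with a cuspidal automorphic representation $\tau$ of $\GL_2(\A)$ of the same weight $k$; under his standing hypotheses, he shows that for each critical point $m$ in an explicit range the quotient of the finite part of $L(m,\Pi\times\tau)$ by a product of a power of $2\pi i$, a power of $i$, a power of the Gauss sums of the relevant central characters, and the $L^2$-norms (with respect to a fixed Haar measure) of suitably normalized vectors in $\Pi$ and in $\tau$, lies in $\overline{\Q}$ and is $\Aut(\C)$-equivariant with $\Pi,\tau$ replaced by $\leftexp{\sigma}{\Pi},\leftexp{\sigma}{\tau}$. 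I would then check that with $\Pi=\pi_F$ and $\tau=\pi_g$ his hypotheses are met here: the levels of $F$ and $g$ are unrestricted, the weight condition is exactly $k>6$, and the central-character compatibility is automatic; and $\pi_F$ is permitted to be non-tempered and to lie in a larger $L$-packet.

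The substance is then a normalization dictionary, in four parts. (a) \emph{The $L$-function}: one matches Morimoto's normalization and argument for $L(s,\Pi\times\tau)$ with the analytically normalized $L_\mathfrak{f}(s,\pi_F\times\pi_g)$ used here; this is a shift of the variable by a fixed rational, which identifies his critical set with $\left[3,\frac k2-1\right]\cap\left(\Z+\frac k2\right)$. (b) \emph{The Petersson norms}: using the isometry $\langle F,F\rangle=\langle\Phi_F,\Phi_F\rangle$ of Theorem~\ref{t:biject}, together with the classical $\GL_2$-analogue relating $\langle g,g\rangle$ as normalized in Section~\ref{s:mainres} to the $L^2$-norm of the adelization of $g$, one rewrites Morimoto's automorphic periods in terms of the level-independent classical Petersson norms $\langle F,F\rangle$ and $\langle g,g\rangle$ of this paper; the discrepancy is a product of covolumes and indices such as $[\Sp_4(\Z):\Gamma]$ and $[\SL_2(\Z):\Gamma_0(M)]$, which are positive rationals and hence harmless for both algebraicity and $\Aut(\C)$-equivariance. (c) \emph{The explicit period}: one absorbs the (rational, $\sigma$-fixed) powers of $2$ and recombines the $i$-powers and Gauss sums, using the standard transformation law of Gauss sums under $\Aut(\C)$, so that Morimoto's transcendental factor becomes exactly $\pi^{4m+3k-1}i^{k}G(\chi\psi)^2$ up to an $\Aut(\C)$-equivariant element of $\overline{\Q}^{\times}$; note that the increment of $4$ in the exponent of $\pi$ per unit increase of $m$ is consistent with the degree $8$ of $L(s,\pi_F\times\pi_g)$, as predicted by Deligne. (d) \emph{Rationality fields and $\Aut(\C)$}: finally, $\pi_{\leftexp{\sigma}{F}}\simeq\leftexp{\sigma}{\pi_F}$ (Proposition~\ref{p:blaharam}) and its classical $\GL_2$-analogue $\pi_{\leftexp{\sigma}{g}}\simeq\leftexp{\sigma}{\pi_g}$ convert Morimoto's $\Pi\mapsto\leftexp{\sigma}{\Pi}$, $\tau\mapsto\leftexp{\sigma}{\tau}$ equivariance into the stated $\sigma(C(m,F,g))=C(m,{}^\sigma\!F,{}^\sigma\!g)$.

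I expect the main obstacle to lie in parts (b)--(c): pinning down exactly which normalized vectors and which Haar measures enter Morimoto's period, and verifying that the resulting comparison constant is precisely $\pi^{4m+3k-1}i^{k}G(\chi\psi)^2$ up to an $\Aut(\C)$-equivariant algebraic unit. This is careful bookkeeping rather than a new idea. One small point to isolate at the outset: the set $\left[3,\frac k2-1\right]\cap\left(\Z+\frac k2\right)$ is empty precisely when $k=7$, so there is nothing to prove in that case, and one may as well assume $k\ge 8$, where Morimoto's archimedean hypotheses are comfortably satisfied.
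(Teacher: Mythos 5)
Your proposal takes the same route as the paper, whose entire ``proof'' is the one-line assertion that the result ``follows directly from Theorem I of~\cite{morimoto} after adjusting for normalizations'' --- precisely your specialization-plus-renormalization dictionary, so there is nothing further to supply beyond the bookkeeping you describe. The one caveat you miss (recorded in the paper's remark after the theorem) is that Morimoto's published statement, which allows arbitrary algebraic Fourier coefficients, is too strong as stated --- his Lemma 6.4 is valid only for subfields of $\Q^{\CM}$ --- so the $\Q^{\CM}$ hypothesis is not merely one of ``his standing hypotheses'' to be checked but the correction that makes the $\Aut(\C)$-equivariance legitimate; since that hypothesis is built into the statement being proved, your deduction is unaffected.
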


\begin{remark}In his paper~\cite{morimoto}, Morimoto states the above theorem incorrectly as being true whenever the Fourier coefficients of $F$ are algebraic. This cannot be true, as can be seen easily by multiplying $F$ by some algebraic number $\alpha$ that is not in $\Q^{\CM}$ and choosing $\sigma$ such that $\sigma(\overline{\alpha}) \neq \overline{\sigma(\alpha)}$. The error in~\cite{morimoto} can be traced back to his Lemma 6.4, which is stated for any number field $L$ but in reality is true only for $L$ that are subfields of $\CM$ fields (compare with our Corollary~\ref{corpeteq}).
\end{remark}

\begin{remark}Using Corollary~\ref{corpeteq}, it is clear that Theorem~\ref{tgsp4gl2} can be extended to any $F$ in $S_k^{(2)}$ that is an eigenfunction of almost all Hecke operators (and replace $\pi_F$ by some irreducible subrepresentation).

\end{remark}

\subsection{The proof of Theorem~\ref{t:main}}

We now prove Theorem~\ref{t:main}. Let $f \in S_{2k}(N_1, \chi_1)$, $g \in S_{2}(N_2, \chi_2)$ be newforms (normalized to have $\CM$ algebraic Fourier coefficients) that satisfy the conditions for a scalar valued Yoshida lifting. We let $\chi$ denote the primitive Dirichlet character that is associated with both $\chi_1$ and $\chi_2$. Note that $\chi$ is necessarily even, i.e., $\chi(-1) = 1$. As remarked at the end of Section~\ref{s:mainres}, it suffices to prove the Theorem in the special case that $F$ is a Yoshida lift and all the Fourier coefficients of $F$ lie in $\Q^{\CM}$. It is clear that $F \in S_{k+1}^{(2)}(\chi)$. As in the statement of the Theorem, we assume that $k \ge 6$.

We have two cases depending on whether $k$ is odd or even. We first deal with the case $k$ odd. Then $k+1$ is even, and since $\overline{\chi}$ is an even Dirichlet character, for some sufficiently large level there exists\footnote{This follows easily from weak dimension formulas for spaces of cusp forms.} a classical holomorphic newform $h$ of weight $k+1$ and character $\overline{\chi}$. By normalizing, we may assume that $h$ has $\CM$ algebraic Fourier coefficients.

Define $$C(F, h)= \frac{L_\mathfrak{f}((k-1)/2, \pi_F\times \pi_h)}{\pi^{5k-3}\langle F, F \rangle \langle h, h \rangle}.$$

Then, by Theorem~\ref{tgsp4gl2}, for any $\sigma \in \Aut(\C)$ we have \begin{equation}\label{proof1}\sigma(C(F, h)) = C({}^\sigma\!F, {}^\sigma\!h).
\end{equation}

Next, define $$C(f, h)= \frac{L_\mathfrak{f}((k-1)/2, \pi_f\times \pi_h)}{\pi^{3k-1}i \langle f, f \rangle}, \quad C(h, g)= \frac{L_\mathfrak{f}((k-1)/2, \pi_h\times \pi_g)}{\pi^{2k}i \langle h, h \rangle}$$

Then, by Theorem~\ref{tgl2gl2}, for any $\sigma \in \Aut(\C)$ we have \begin{equation}\label{proof2}\sigma(C(f, h)) = C({}^\sigma\!f, {}^\sigma\!h), \quad \sigma(C(h, g)) = C({}^\sigma\!h, {}^\sigma\!g).
\end{equation}

From~\eqref{proof1} and~\eqref{proof2}, we deduce that for all $\sigma \in \Aut(\C)$, we have the identity \begin{equation}\label{proofid1}\sigma\left(\frac{C(f, h)C(h, g)}{C(F,h)}\right) = \frac{C({}^\sigma\!f, {}^\sigma\!h)C({}^\sigma\!h, {}^\sigma\!g)}{C({}^\sigma\!F,{}^\sigma\!h)}. \end{equation}
By the basic property of Yoshida lifts (see Remark~\ref{yoshbasicrem}), we derive \begin{equation}\label{proofid2}\frac{C(f, h)C(h, g)}{C(F, h)}= -\frac{\pi^2\langle F, F\rangle}{\langle f, f\rangle} , \quad and \quad  \frac{C({}^\sigma\!f, {}^\sigma\!h)C({}^\sigma\!h, {}^\sigma\!g)}{C({}^\sigma\!F, {}^\sigma\!h)} = - \frac{\pi^2\langle {}^\sigma\!F, {}^\sigma\!F\rangle}{\langle {}^\sigma\!f, {}^\sigma\!f\rangle}.\end{equation}

Combining~\eqref{proofid1} and~\eqref{proofid2}, we deduce that $$\sigma\left(\frac{\pi^2\langle F, F\rangle}{\langle f, f\rangle}\right)=  \frac{\pi^2\langle {}^\sigma\!F, {}^\sigma\!F\rangle}{\langle {}^\sigma\!f, {}^\sigma\!f\rangle},$$
as required.

Next we consider the case when $k$ is even. Then $k+1$ is odd. Pick \emph{any} odd quadratic character $\epsilon$ and put $\chi_3 = \chi \epsilon$. Then $\chi_3$ is even and for some sufficiently large level there exists a classical holomorphic newform $h$ of weight $k+1$ and character $\overline{\chi_3}$. By normalizing, we may assume that $h$ has $\CM$ algebraic Fourier coefficients.

Now, we define $$C(F, h)= \frac{L_\mathfrak{f}((k-1)/2, \pi_F\times \pi_h)}{\pi^{5k-3} i G(\epsilon)^2 \langle F, F \rangle \langle h, h \rangle},$$
$$C(f, h)= \frac{L_\mathfrak{f}((k-1)/2, \pi_f\times \pi_h)}{\pi^{3k-1}i G(\epsilon)\langle f, f \rangle}, \quad C(h, g)= \frac{L_\mathfrak{f}((k-1)/2, \pi_h\times \pi_g)}{\pi^{2k} G(\epsilon) \langle h, h \rangle},$$
and proceed exactly as before.
\bibliographystyle{plain}
\bibliography{lfunction}

\end{document}